\newcommand{\smallfrac}[2]{{\textstyle\frac{#1}{#2}}}
\newtheorem{proposition}{Proposition}[section]
\newtheorem{corollary}[proposition]{Corollary}
\newtheorem{lemma}[proposition]{Lemma}
\newtheorem{theorem}[proposition]{Theorem}
\numberwithin{equation}{section}
\title{Coupling of HDG with a double-layer potential BEM}
\date{\today}
\author{Zhixing Fu\footnote{Department of Mathematical Sciences, University of Delaware, Newark DE 19716. {\tt zfu@math.udel.edu}}, Norbert Heuer\footnote{Facultad de Matem\'aticas, Ponticia Universidad Cat\'olica de Chile, Avenida Vicu\~na Mackenna 4860,
Santiago, Chile {\tt nheuer@mat.puc.cl}. Partially supported by FONDECYT project 1110324 and CONICYT Anillo ACT1118 (ANANUM).}, \& Francisco--Javier Sayas\footnote{Department of Mathematical Sciences, University of Delaware, Newark DE 19716.  {\tt fjsayas@udel.edu}. Partially funded by NSF (grant DMS 1216356)}}
\begin{document}

\maketitle

\begin{abstract}
In this paper we propose and analyze a new coupling procedure for the Hybridizable Discontinuous Galerkin Method with Galerkin Boundary Element Methods based on a double layer potential representation of the exterior component of the solution of a transmission problem. We show a discrete uniform coercivity estimate for the non-symmetric bilinear form and prove optimal convergence estimates for all the variables, as well as superconvergence for some of the discrete fields. Some numerical experiments support the theoretical findings.\\
{\bf AMS Subject classification.} 65N30, 65N38, 65N12, 65N15
\end{abstract}

\section{Introduction}

In this paper we propose and analyze a new coupling procedure for the Hybridizable Discontinuous Galerkin Method (HDG) \cite{CoGoLa:2009} and a Galerkin Boundary Element procedure based on a double layer potential representation. The model problem is a transmission problem in free space, coupling a linear diffusion equation with variable diffusivity ($\mathrm{div}\,\kappa \nabla u=f$)  in a polygonal domain, with an exterior Laplace equation. The transmission conditions are given by imposing the value of the difference of the unknown and its flux on the interface between the interior and the exterior domains. 

Much has been written on the advantages and disadvantages of the many Discontinuous Galerkin schemes. In support of this piece of work, let us briefly hint at some of the features that make the HDG --originally a derivation of the Locally Discontinuous Galerkin (LDG) method by Bernardo Cockburn and his collaborators-- a family of interest. First of all, HDG uses polynomials of the same degree $k\ge 0$ for discretization of all the variables --in the case of diffusion problems, a scalar unknown $u$, the vector valued flux $\mathbf q=-\kappa\nabla u$ and a scalar unkown on the skeleton of the triangulation--, attaining {\em optimal order} $\mathcal O(h^{k+1})$ in the approximation of all of them. This feature compares well with Mixed Finite Element Methods, of which HDG is a natural modification: HDG can be considered as a variant of the Raviart-Thomas and Brezzi-Douglas-Marini Mixed Elements, implemented with Lagrange multipliers on the interelement faces \cite{ArBr:1985}, eliminating the degrees of freedom used for stabilization and using a stabilization (not penalization) parameter to perform the same task. Similar to the Lagrange multiplier implementation of mixed methods by Arnold and Brezzi \cite{ArBr:1985}, HDG is implemented by {\em hybridization}, reducing the unknowns to those living on interelement faces. This reduces considerably the number of degrees of freedom and makes the method competitive with respect to other families of Finite Element Methods, while still being a mixed method, that approximates several fields simultaneously. Another interesting feature of HDG is the fact that for polynomial degrees $k\ge 1$, the two unknowns related to the scalar field (the respective approximations inside the element and on the skeleton) {\em superconverge} at rate $\mathcal O(h^{k+2})$. This allows for the application of standard postprocessing techniques \cite{CoGoSa:2010} that can be traced back to Stenberg \cite{Stenberg:1991}.

Among many of its good properties (for the limited set of equations where it is usable), the Boundary Element Method provides a reliable form of constructing high order absorbing boundary conditions with complete flexibility on the geometric structure of the domain (it does not require the domain to be convex or even connected). It is therefore natural to test and study the possibility of using BEM as a way of generating a coupled discretization scheme with HDG for transmission problems. The coupling of DG and BEM started with the study of LDG-BEM schemes \cite{GaSa:2006}, extended to other DG methods of the Interior Penalty (IP) family \cite{GaHeSa:2010}. All of these methods, plus several new ones, were presented as particular cases of a methodology for creating {\em symmetric couplings} of DG and BEM in \cite{CoSa:2012}. One of the methods in this latter paper, using HDG and BEM, was recently analyzed in \cite{CoGuSa:TA}. As opposed to symmetric couplings with BEM, that need two integral equations (and thus four integral operators) to reach a stable formulation, non-symmetric couplings require only one integral equation (and two integral operators). From this point of view, they provide simpler (and more natural) forms of coupling BEM with field methods (FEM, Mixed FEM or DG). The non-symmetric coupling of Mixed FEM and BEM is recent \cite{MeSaSe:2011} and expands ideas used for analyzing the simple coupling of FEM-BEM \cite{Sayas:2009}. The presentation and testing of non-symmetric coupling of DG methods of the IP family with BEM \cite{OfRoStTa:2012}, led to its analysis \cite{HeSa:SB}, providing the first rigorous proof of convergence of this kind of schemes. Note that, unlike in the case of symmetric couplings, where the stability analysis boils down to an energy argument, coercivity properties in non-symemtric couplings pose a serious analytic challenge. The present paper makes a contribution in that direction, showing how to develop a coercivity analysis for non-symmetric coupling of HDG-BEM, and pointing out some interesting facts about the size of the diffusion parameter in the interior domain.

The paper is structured as follows. In Section \ref{sec:1} we present the method. Sections \ref{sec:2} to \ref{sec:4} cover the analysis in progressive steps. Section \ref{sec:F} discusses several related extensions and possible modifications of the method. In Section \ref{sec:5} we include some numerical experiments. A collection of known  results concerning transmission problems, potentials and integral operators, is given in Appendix \ref{sec:A} for ease of reference.

\paragraph{Regarding notation.} Basic theory of Sobolev spaces  is assumed throughout. Norms of $L^2(\mathcal O)$ type will be subscripted with the integration domain $\|\cdot\|_{\mathcal O}$. All other norms will be indexed with the name of the space.

\section{Formulation and discretization}\label{sec:1}

Let $\Omega\subset \mathbb R^d$ be a polygonal domain when $d=2$ or a Lipschitz polyhedral domain when $d=3$, with boundary $\Gamma$. For simplicity we will assume that $\Gamma$ is connected. (Connectedness plays a minor role in the amount of energy-free solutions of the transmission problem.) Let $\Omega_+:=\mathbb R^d\setminus\overline\Omega$ be then the domain exterior to $\Gamma$. The outward pointing unit normal vector field on $\Gamma$ is denoted $\mathbf n$. 

\subsection{Statement of the problem}

We consider a scalar positive diffusion coefficient $\kappa \in L^\infty(\Omega)$ such that $\kappa^{-1}\in L^\infty(\Omega)$.
\begin{subequations}\label{eq:1.1}
We are interested in the following transmission problem: an elliptic second order diffusion equation in $\Omega$, written as a first order system,
\begin{equation}\label{eq:1.1a}
\mathbf q+\kappa \nabla u =\mathbf 0\qquad \mbox{ and } \qquad 
\mathrm{div}\,\mathbf q = f \qquad \mbox{in $\Omega$},
\end{equation}
the Laplace equation in the exterior domain
\begin{equation}\label{eq:1.1b}
\Delta u_+=0 \quad\mbox{in $\Omega_+$} \qquad \mbox{and} \qquad u=o(1) \quad\mbox{at infinity},
\end{equation}
coupled through two transmission conditions on the common boundary of these domains
\begin{equation}\label{eq:1.1c}
u=u_++\beta_0 \qquad\mbox{and}\qquad -\mathbf q\cdot\mathbf n = \partial_{\mathbf n} u_++\beta_1 \qquad \mbox{ on $\Gamma$.}
\end{equation}
\end{subequations}
In principle,  boundary values of $u$ and $u_+$ in \eqref{eq:1.1c} are taken in the sense of traces for functions with local Sobolev $H^1$ regularity. Similarly $\mathbf q\cdot\mathbf n$ and $\partial_{\mathbf n} u_+$ are defined in a weak form as elements of the space $H^{-1/2}(\Gamma)$. Basic regularity requirements for data is:
$f\in L^2(\Omega)$, $\beta_0\in H^{1/2}(\Gamma)$, and  $\beta_1\in L^2(\Gamma).$
We note that $\beta_1 \in H^{-1/2}(\Gamma)$ is valid data for the transmission problem, but not for the type of formulation we are going to use. Data functions are assumed to satisfy the following compatibility condition:
\begin{equation}\label{eq:1.2}
\int_\Omega f+\int_\Gamma \beta_1 =0.
\end{equation}
A more detailed discussion about this condition and solvability issues for the transmission problems is given in Section \ref{sec:A.3}. Here we just note the following:
\begin{itemize}
\item[(a)] When $d=2$, condition \eqref{eq:1.2} is necessary and sufficient for the transmission problem \eqref{eq:1.1} to have a solution. In this case $u_+=\mathcal O(r^{-1})$ as $r=|\mathbf x|\to \infty$.
\item[(b)] When $d=3$, problem \eqref{eq:1.1} has a unique solution even if \eqref{eq:1.2} fails to hold. If \eqref{eq:1.2} is satisfied, the solution decays as $u_+=\mathcal O(r^{-2})$ at infinity. Proposition \ref{prop:A.4}(b) shows the elementary modification of data that is needed to have condition \eqref{eq:1.2} satisfied, while still providing a solution of the original problem.
\end{itemize}

\subsection{Coupling with a double layer potential}

Condition \eqref{eq:1.2} allows us to write $u_+$ as a double layer potential
\begin{equation}
u_+=\mathcal D \varphi, \qquad \varphi \in H^{1/2}_0(\Gamma).
\end{equation}
(See \eqref{eq:A.00} and \eqref{eq:A.0} for the definitions and Proposition \ref{prop:A.22} for the representation result.) The exterior trace and normal derivative of the double layer potential can be written in terms of boundary integral operators, as explained in Section \ref{sec:A.4}. These operators are introduced in \eqref{eq:A.11}-\eqref{eq:A.12}. They allow us to write the transmission conditions \eqref{eq:1.1c} in the equivalent form
\begin{equation}
u=\smallfrac12\varphi+\mathcal K \varphi+\beta_0 \qquad \mbox{and}\qquad \mathbf q\cdot\mathbf n=\mathcal W \varphi -\beta_1 \qquad \mbox{on $\Gamma$}.
\end{equation}
Note that
\begin{equation}\label{eq:1.0}
\int_\Gamma \varphi = 0 \qquad \mbox{and}\qquad \mathcal W \varphi= \mathbf q\cdot\mathbf n+\beta_1 \qquad \mbox{on $\Gamma$}
\end{equation}
imply
\begin{equation}
\omega(\varphi,\phi):=\langle\mathcal W \varphi,\phi\rangle_\Gamma+\int_\Gamma \varphi\,\int_\Gamma \phi = \langle \mathbf q\cdot\mathbf n+\beta_1,\phi\rangle_\Gamma \qquad \forall \phi \in H^{1/2}(\Gamma).
\end{equation}
At this moment, it is convenient to collect all the equations that make up the formulation we are going to discretize:
\begin{subequations}\label{eq:1.6}
\begin{alignat}{4}
\label{eq:1.6a}
\kappa^{-1}\mathbf q +\nabla u & = 0 & & \mbox{ in $\Omega$},\\
\label{eq:1.6b}
\mathrm{div}\,\mathbf q &=f & & \mbox{ in $\Omega$},\\
\label{eq:1.6c}
u - (\smallfrac12\varphi+\mathcal K\varphi) &=\beta_0 & & \mbox{ on $\Gamma$},\\
\label{eq:1.6d}
-\langle\mathbf q\cdot\mathbf n,\phi\rangle_\Gamma + \omega(\varphi,\phi)&=\langle\beta_1,\phi\rangle_\Gamma &\qquad & \forall \phi\in H^{1/2}(\Gamma).
\end{alignat}
\end{subequations}
The exterior field is reconstructed as a double layer potential $u_+=\mathcal D \varphi$. The next lemma ensures that the integral boundary condition   and the normalization condition \eqref{eq:1.0} for the density $\varphi$  are adequately encoded in the system \eqref{eq:1.6}.

\begin{lemma}\label{lemma:1.1}
If $(\mathbf q,u,\varphi)$ is a solution to \eqref{eq:1.6} and the compatibility condition \eqref{eq:1.2} is satisfied, then $\int_\Gamma \varphi=0$.
\end{lemma}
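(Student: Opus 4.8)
The plan is to test the system \eqref{eq:1.6} with carefully chosen constants and exploit the compatibility condition \eqref{eq:1.2}. First I would integrate equation \eqref{eq:1.6b} over $\Omega$ and apply the divergence theorem to get $\langle\mathbf q\cdot\mathbf n,1\rangle_\Gamma=\int_\Omega\mathrm{div}\,\mathbf q=\int_\Omega f$. Next I would take $\phi=1$ in the variational equation \eqref{eq:1.6d}; since $1\in H^{1/2}(\Gamma)$ this is admissible, and it yields $-\langle\mathbf q\cdot\mathbf n,1\rangle_\Gamma+\omega(\varphi,1)=\langle\beta_1,1\rangle_\Gamma=\int_\Gamma\beta_1$. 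Substituting the previous identity gives $\omega(\varphi,1)=\int_\Omega f+\int_\Gamma\beta_1$, which is exactly zero by \eqref{eq:1.2}.

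It then remains to show that $\omega(\varphi,1)=0$ forces $\int_\Gamma\varphi=0$. Unwinding the definition, $\omega(\varphi,1)=\langle\mathcal W\varphi,1\rangle_\Gamma+\int_\Gamma\varphi\int_\Gamma 1=\langle\mathcal W\varphi,1\rangle_\Gamma+|\Gamma|\int_\Gamma\varphi$. So the statement reduces to the claim that $\langle\mathcal W\varphi,1\rangle_\Gamma=0$ for every $\varphi$, i.e. that constants lie in the kernel of the (adjoint of the) hypersingular operator $\mathcal W$. This is a standard property of the hypersingular operator for the Laplacian --- $\mathcal W 1=0$ and, since $\mathcal W$ is symmetric, $\langle\mathcal W\varphi,1\rangle_\Gamma=\langle\varphi,\mathcal W 1\rangle_\Gamma=0$ --- and I would invoke the corresponding result from Appendix \ref{sec:A} (the section on integral operators, around \eqref{eq:A.11}--\eqref{eq:A.12}) rather than reprove it. With $\langle\mathcal W\varphi,1\rangle_\Gamma=0$ in hand, $\omega(\varphi,1)=|\Gamma|\int_\Gamma\varphi$, and since the left-hand side vanishes and $|\Gamma|\neq 0$, we conclude $\int_\Gamma\varphi=0$.

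The only real subtlety --- and the step I would be most careful about --- is the mapping-property bookkeeping: one must check that the pairing $\langle\mathcal W\varphi,1\rangle_\Gamma$ makes sense (i.e. $\mathcal W\varphi\in H^{-1/2}(\Gamma)$ and $1\in H^{1/2}(\Gamma)$, so the duality pairing is well defined) and that the symmetry/kernel property of $\mathcal W$ is being quoted in the correct form. Everything else is a two-line computation. In short, the proof is: divergence theorem on \eqref{eq:1.6b}, test \eqref{eq:1.6d} with $\phi\equiv 1$, combine, use \eqref{eq:1.2}, and finally use $\mathcal W 1=0$ to isolate $\int_\Gamma\varphi$.
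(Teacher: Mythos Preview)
Your proof is correct and follows essentially the same route as the paper's: integrate \eqref{eq:1.6b}, test \eqref{eq:1.6d} with $\phi=1$, and use the kernel property of $\mathcal W$ (which is Proposition~\ref{prop:A.6} rather than the display \eqref{eq:A.11}--\eqref{eq:A.12}) to reduce $\omega(\varphi,1)$ to $|\Gamma|\int_\Gamma\varphi$. The only cosmetic difference is that the paper cites Proposition~\ref{prop:A.6} directly instead of invoking symmetry of $\mathcal W$ explicitly.
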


\begin{proof}
Integrating \eqref{eq:1.6b} over $\Omega$ and testing \eqref{eq:1.6d} with $\phi=1$, it follows that
\[
\int_\Omega f = \langle \mathbf q\cdot\mathbf n,1\rangle_\Gamma=\omega(\varphi,1)-\langle\beta_1,1\rangle_\Gamma=|\Gamma| \int_\Gamma \varphi-\int_\Gamma \beta_1,
\]
because of Proposition \ref{prop:A.6}. This proves the result.
\end{proof}

\subsection{Discretization with HDG and Galerkin BEM}

We start the discretization process by describing the discrete geometric elements.  The domain $\Omega$ is divided into triangles ($d=2$) or tetrahedra ($d=3$) with the usual conditions for conforming finite element meshes. A general element will be denoted $K$, $\mathcal T_h$ will be the set of all elements and $h:=\max_{K\in \mathcal T_h} h_K$ the maximum diameter of elements of the triangulation. The triangulation is assumed to be shape-regular. Extension of the forthcoming results to non-conforming meshes is relatively simple while not straightforward, requiring the use of some finely tuned results about HDG on general grids \cite{ChCo:TAa}, \cite{ChCo:TAb}. 

We assume the existence of a triangulation $\Gamma_h$ of the boundary $\Gamma$, formed by line segments ($d=2$) or triangles ($d=3$). We write $h_\Gamma:=\max_{e\in \Gamma_h} h_e$. From the point of view of analysis, it is immaterial whether this grid is related to $\mathcal T_h$ or not. (Note that for the experiments we will only use meshes where $\Gamma_h$ is the trace of $\mathcal T_h$ on $\Gamma$, which makes the implementation considerably simpler.)
At the time of stating and proving the convergence theorems we will add some technical restrictions relating the mesh-sizes of the two grids.

The set of edges ($d=2$) or faces ($d=3$) of all elements of the triangulation will be denoted $\mathcal E_h$ and $\partial \mathcal T_h=\cup_{e\in \mathcal E_h} e$ will denote the skeleton of the triangulation. Volumetric integration will be denoted with parentheses
\[
(u,v)_K:=\int_K u\,v, \qquad (\mathbf p,\mathbf q)_K:=\int_K \mathbf p\cdot\mathbf q, \qquad (u,v)_{\mathcal T_h}:=\sum_{K\in \mathcal T_h} (u,v)_K,
\]
while integration on boundaries will be denoted with angled brackets
\[
\langle u,v\rangle_{\partial K}:=\int_{\partial K} u\, v, \qquad \langle u,v\rangle_{\partial \mathcal T_h}:=\sum_{K\in \mathcal T_h} \langle u,v\rangle_{\partial K}, \qquad \langle u,v\rangle_{\partial \mathcal T_h\setminus\Gamma} :=\sum_{K\in \mathcal T_h} \langle u,v\rangle_{\partial K\setminus\Gamma}.
\]
The unit normal outward pointing vector field on $\partial K$ will be denoted $\mathbf n_{\partial K}$, or simply $\mathbf n$, when there is no doubt on what the element is.

Local discrete spaces will be composed of polynomials. The set $\mathcal P_k(K)$ contains all $d$-variate polynomials of degree less than or equal to $k$ and $\boldsymbol{\mathcal P}_k(K):=(\mathcal P_k(K))^d$. If $e\in \mathcal E_h$ or $e\in \Gamma_h$, $\mathcal P_k(e)$ is the set of polynomials of degree less than or equal to $k$ defined on $e$, i.e.,  the space of $(d-1)$-variate polynomials on local tangential coordinates. Four global spaces will be used for discretization:
\begin{subequations}
\begin{alignat}{4}
\boldsymbol V_h :=& \{ \mathbf v:\Omega \to \mathbb R^d\,:\, \mathbf v|_K \in \boldsymbol{\mathcal P}_k(K) \quad \forall K \in \mathcal T_h\},\\
W_h :=& \{ w:\Omega \to \mathbb R\,:\, w|_K \in \mathcal P_k(K) \quad \forall K \in \mathcal T_h\},\\
M_h :=& \{ \widehat v:\partial \mathcal T_h\to \mathbb R\,:\, \widehat v|_e\in \mathcal P_k(e) \quad \forall e \in \mathcal E_h\},\\
Y_h :=& \{ \phi:\Gamma \to \mathbb R\,:\, \phi\in \mathcal C(\Gamma), \quad \phi|_e\in \mathcal P_{k+1}(e) \quad\forall e \in \Gamma_h\}.
\end{alignat}
\end{subequations}
The discretization method consists of using the Hybridizable Discontinuous Galerkin method (see \cite{CoGoLa:2009}, \cite{CoGoSa:2010}) for equations \eqref{eq:1.6a}-\eqref{eq:1.6b}-\eqref{eq:1.6c} (this part of the system can be understood as an interior Dirichlet problem) and conforming Galerkin (Boundary Element) Method for equation \eqref{eq:1.6d} (this part is considered as a hypersingular integral equation for an exterior Neumann problem). We thus look for $\mathbf q_h \in \boldsymbol V_h$, $u_h \in W_h$,  $\widehat u_h \in M_h$, and $\varphi_h \in Y_h$ satisfying
\begin{subequations}\label{eq:1.8}
\begin{alignat}{4} 
\label{eq:1.8a}
(\kappa^{-1}\mathbf q_h,\mathbf r)_{\mathcal T_h} -(u_h,\mathrm{div}\,\mathbf r)_{\mathcal T_h}+\langle \widehat u_h,\mathbf r\cdot\mathbf n\rangle_{\partial\mathcal T_h} &=0 & & \forall \mathbf r \in \boldsymbol V_h,\\
\label{eq:1.8b}
-(\mathbf q_h,\nabla w)_{\mathcal T_h}+\langle \widehat{\mathbf q}_h\cdot\mathbf n,w\rangle_{\partial T_h} &=(f,w)_{\mathcal T_h}  &
\qquad &\forall w \in W_h,\\
\label{eq:1.8c}
-\langle\widehat{\mathbf q}_h\cdot\mathbf n,\widehat v\rangle_{\partial \mathcal T_h\setminus\Gamma} & = 0 & &\forall \widehat v \in M_h,\\
\label{eq:1.8d}
\langle \widehat u_h,\widehat v\rangle_\Gamma -\langle\smallfrac12\varphi_h+\mathcal K\varphi_h,\widehat v\rangle_\Gamma &=\langle\beta_0,\widehat v\rangle_\Gamma & & \forall \widehat v\in M_h,\\
\label{eq:1.8e}
-\langle\widehat{\mathbf q}_h\cdot\mathbf n,\phi\rangle_\Gamma+\omega(\varphi_h,\phi) &=\langle\beta_1,\phi\rangle_\Gamma & & \forall \phi \in Y_h.
\end{alignat}
Here $\widehat{\mathbf q}_h$ is defined on the boundaries of the elements using the expression
\begin{equation}\label{eq:1.8f}
\widehat{\mathbf q}_h:=\mathbf q_h|_K+\tau_{\partial K} (u_h|_K-\widehat u_h)\mathbf n_{\partial K}\qquad\mbox{on $\partial K$},
\end{equation}
\end{subequations}
where $\tau_{\partial K}:\partial K \to \mathbb R$ is a {\em non-negative} stabilization function that is constant on each edge/face of $\partial K$ and such that it is strictly positive on at least one edge/face of each triangle. The stabilization function $\tau$ can be double-valued on interlement edges/faces and $\widehat{\mathbf q}_h$ is in principle multiple valued, but  its normal component is made to be single valued through equation \eqref{eq:1.8c}. Note also that equations \eqref{eq:1.8c}-\eqref{eq:1.8d} are tested with the same space, but because of the integration domain, they produce together as many equations as the dimension of $M_h$. Using the local solvers for the HDG method \cite{CoGoLa:2009}, the system \eqref{eq:1.8} can be reduced to a system with $(\widehat u_h,\varphi_h)\in M_h \times Y_h$ as the only unknowns.
A discrete counterpart of Lemma \ref{lemma:1.1} holds.

\begin{lemma}\label{lemma:1.2}
If $(\mathbf q_h,u_h,\widehat u_h,\varphi_h)$ solves \eqref{eq:1.8} and the compatibility condition \eqref{eq:1.2} holds, then $\int_\Gamma \varphi_h=0$.
\end{lemma}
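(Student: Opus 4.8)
The plan is to transcribe the proof of Lemma~\ref{lemma:1.1} into the discrete setting, using that constant functions belong to all the relevant discrete spaces. Since $k\ge 0$, the constant function $1$ lies in $W_h$, in $M_h$, and in $Y_h$ (for the last one, a constant is continuous on $\Gamma$ and a polynomial of degree $\le k+1$ on each $e\in\Gamma_h$); this observation is what makes the argument go through, and it is worth recording explicitly.

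First I would test \eqref{eq:1.8b} with $w=1$. Because $\nabla 1=0$, the volume term disappears and we are left with
\[
\langle\widehat{\mathbf q}_h\cdot\mathbf n,1\rangle_{\partial\mathcal T_h}=(f,1)_{\mathcal T_h}=\int_\Omega f.
\]
Next I would test \eqref{eq:1.8c} with $\widehat v=1\in M_h$, which gives $\langle\widehat{\mathbf q}_h\cdot\mathbf n,1\rangle_{\partial\mathcal T_h\setminus\Gamma}=0$. Splitting the skeleton integral into its interior and boundary parts,
\[
\langle\widehat{\mathbf q}_h\cdot\mathbf n,1\rangle_{\partial\mathcal T_h}
=\langle\widehat{\mathbf q}_h\cdot\mathbf n,1\rangle_{\partial\mathcal T_h\setminus\Gamma}
+\langle\widehat{\mathbf q}_h\cdot\mathbf n,1\rangle_\Gamma
=\langle\widehat{\mathbf q}_h\cdot\mathbf n,1\rangle_\Gamma,
\]
so that combining with the previous identity yields $\langle\widehat{\mathbf q}_h\cdot\mathbf n,1\rangle_\Gamma=\int_\Omega f$.

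Then I would test \eqref{eq:1.8e} with $\phi=1\in Y_h$, obtaining $-\langle\widehat{\mathbf q}_h\cdot\mathbf n,1\rangle_\Gamma+\omega(\varphi_h,1)=\int_\Gamma\beta_1$. By the definition of $\omega$, $\omega(\varphi_h,1)=\langle\mathcal W\varphi_h,1\rangle_\Gamma+|\Gamma|\int_\Gamma\varphi_h$, and $\langle\mathcal W\varphi_h,1\rangle_\Gamma=0$ because constants lie in the kernel of the hypersingular operator (Proposition~\ref{prop:A.6}). Substituting $\langle\widehat{\mathbf q}_h\cdot\mathbf n,1\rangle_\Gamma=\int_\Omega f$ gives $|\Gamma|\int_\Gamma\varphi_h=\int_\Omega f+\int_\Gamma\beta_1$, which vanishes by the compatibility condition \eqref{eq:1.2}; hence $\int_\Gamma\varphi_h=0$.

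The argument is essentially bookkeeping, so there is no genuine obstacle. The only points requiring (minor) care are checking that $1$ belongs to each of $W_h$, $M_h$, and $Y_h$, and correctly accounting for the boundary faces when splitting $\langle\,\cdot\,,\,\cdot\,\rangle_{\partial\mathcal T_h}$ into interior and boundary contributions, so that \eqref{eq:1.8c} annihilates exactly the interior part. Everything else is a verbatim copy of the continuous proof.
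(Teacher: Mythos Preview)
Your proof is correct and follows essentially the same approach as the paper's: test \eqref{eq:1.8b} and \eqref{eq:1.8e} with the constant $1$ and invoke Proposition~\ref{prop:A.6}. Your version is in fact more explicit, since you spell out the use of \eqref{eq:1.8c} to pass from $\langle\widehat{\mathbf q}_h\cdot\mathbf n,1\rangle_{\partial\mathcal T_h}$ to $\langle\widehat{\mathbf q}_h\cdot\mathbf n,1\rangle_\Gamma$, a step the paper leaves implicit.
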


\begin{proof}
Testing equation \eqref{eq:1.8b} with $w=1$ and equation \eqref{eq:1.8e} and using Proposition \ref{prop:A.6}, the result follows.
\end{proof}

The next sections deal with the analysis of the method: we first show the basic discrete coercivity arguments (Section \ref{sec:2}), proceed to proving energy estimates based on the HDG projection (Section \ref{sec:3}) and finally use duality arguments to analyze convergence (and superconvergence) of some of the fields (Section \ref{sec:4}).

\section{Solvability of the discrete system}\label{sec:2}

The aim of this section is to show that, under some conditions on $\kappa$, the system \eqref{eq:1.8} has a unique solution for $h$ small enough. A relevant hypothesis will be the following:
\begin{equation}\label{eq:2.1}
\kappa_{\max}:=\|\kappa\|_{L^\infty(\Omega)} < 4.
\end{equation}
This hypothesis is related to coercivity of the underlying mixed formulation. We will discuss this hypothesis (and compare it with similar bounds for other coupled BEM-FEM schemes) in Section \ref{sec:5}, where we will also make some experiments related to it. For some of the forthcoming arugments, we will use the piecewise constant function $\mathfrak h:\partial\mathcal T_h \to \mathbb R$ given by $\mathfrak h|_e:=h_e$. We will pay attention to the quantities
\[
\tau_{\max}:=\max_{K\in \mathcal T_h} \|\tau\|_{L^\infty(\partial K)}=:\|\tau\|_{L^\infty(\partial\mathcal T_h)} \quad \mbox{and}\quad \|\mathfrak h \tau\|_{L^\infty(\partial\mathcal T_h)},
\]
noting that because of the shape regularity of the grid, we can bound $h_K \|\tau\|_{L^\infty(\partial K)}\le C \|\mathfrak h\tau\|_{L^\infty(\partial\mathcal T_h)}$ for all $K$.
Three more polynomial spaces will appear in our arguments:
\begin{alignat*}{4}
\mathcal P_k^\bot(K) &:= \{ u\in \mathcal P_k(K)\,:\, (u,w)_K=0 \quad \forall w \in \mathcal P_{k-1}(K)\},\\
\boldsymbol{\mathcal P}_k^\bot(K) &:=(\mathcal P_k^\bot(K))^d=\{ \mathbf q\in \boldsymbol{\mathcal P}_k(K)\,:\, (\mathbf q,\mathbf r)_K=0 \quad \forall \mathbf r \in \boldsymbol{\mathcal P}_{k-1}(K)\},\\
\mathcal R_k(\partial K)&:= \{ q:\partial K \to \mathbb R\,:\, q|_e\in \mathcal P_k(e) \quad \forall e \in \mathcal E(K)\},
\end{alignat*}
where in the last space, we have denoted $\mathcal E(K):=\{ e\in \mathcal E_h\,:\, e\subset \partial K\}$.

\begin{lemma}\label{lemma:2.1}
\begin{itemize}
\item[{\rm (a)}] If $q\in \mathcal P_k^\bot(K)$ and $q=0$ on $e\in \mathcal E(K)$, then $q=0$.
\item[{\rm (b)}] The following decomposition is orthogonal in $L^2(\partial K)$:
\[
\mathcal R_k(\partial K)=\{\mathbf v|_{\partial K} \cdot\mathbf n \,:\, \mathbf v \in \boldsymbol{\mathcal P}_k^\bot(K) \} \oplus \{ q|_{\partial K}\,:\, q\in \mathcal P_k^\bot(K)\}.
\]
\end{itemize}
\end{lemma}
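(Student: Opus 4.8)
The plan is to treat the two parts in sequence, with part (a) feeding directly into part (b).

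For part (a), the idea is a dimension-count / unisolvence argument on the single element $K$. Suppose $q\in\mathcal P_k^\bot(K)$ vanishes on one face $e\in\mathcal E(K)$. Write $e=\{\ell=0\}$ for the affine function $\ell$ defining the hyperplane containing $e$; since $q|_e=0$ we may factor $q=\ell\,p$ with $p\in\mathcal P_{k-1}(K)$. The orthogonality condition then reads $0=(q,p)_K=(\ell\,p,p)_K=\int_K \ell\,p^2$. The hard part is to conclude $p=0$ from this: $\ell$ changes sign on $K$ in general (it is positive on the interior side of $e$, zero on $e$ — but $K$ lies entirely on one side of the hyperplane containing its own face $e$!). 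Indeed, because $e$ is a face of the simplex $K$, the whole of $K$ lies in the closed half-space $\{\ell\ge 0\}$ (with suitable sign of $\ell$), and $\ell>0$ on $K\setminus e$. Hence $\int_K \ell\,p^2\ge 0$ with equality forcing $p=0$ a.e. on $K$, so $q=0$. This is the one place where the simplex geometry is genuinely used, and it is the main (though mild) obstacle.

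For part (b), I would argue by orthogonality plus a dimension count. First, orthogonality: if $\mathbf v\in\boldsymbol{\mathcal P}_k^\bot(K)$ and $q\in\mathcal P_k^\bot(K)$, then by the divergence theorem on each face, or more directly by integration by parts element-wise,
\[
\langle \mathbf v\cdot\mathbf n, q\rangle_{\partial K}=(\mathrm{div}\,\mathbf v, q)_K+(\mathbf v,\nabla q)_K.
\]
Now $\mathrm{div}\,\mathbf v\in\mathcal P_{k-1}(K)$, so $(\mathrm{div}\,\mathbf v,q)_K=0$ because $q\in\mathcal P_k^\bot(K)$; and $\nabla q\in\boldsymbol{\mathcal P}_{k-1}(K)$, so $(\mathbf v,\nabla q)_K=0$ because $\mathbf v\in\boldsymbol{\mathcal P}_k^\bot(K)$. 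Hence the two subspaces are $L^2(\partial K)$-orthogonal. Second, injectivity of each restriction map: by part (a), if $q\in\mathcal P_k^\bot(K)$ restricts to $0$ on all of $\partial K$ (in particular on one face), then $q=0$, so $q\mapsto q|_{\partial K}$ is injective on $\mathcal P_k^\bot(K)$; applying this componentwise (or using that $\mathbf v\cdot\mathbf n|_e$ determines, together with the tangential pieces, enough data — more carefully: if $\mathbf v\cdot\mathbf n=0$ on $\partial K$ then each scalar component argument via part (a) gives $\mathbf v=0$) shows $\mathbf v\mapsto \mathbf v|_{\partial K}\cdot\mathbf n$ is injective on $\boldsymbol{\mathcal P}_k^\bot(K)$.

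Finally I would close the argument by comparing dimensions. We have $\dim\mathcal R_k(\partial K)=(d+1)\dim\mathcal P_k(\mathbb R^{d-1})$ since $\partial K$ has $d+1$ faces. On the other side, $\dim\mathcal P_k^\bot(K)=\dim\mathcal P_k(\mathbb R^d)-\dim\mathcal P_{k-1}(\mathbb R^d)=\dim\mathcal P_k(\mathbb R^{d-1})$ (the dimension of homogeneous-degree-$k$ polynomials in $d$ variables equals that of all polynomials of degree $\le k$ in $d-1$ variables), and likewise $\dim\boldsymbol{\mathcal P}_k^\bot(K)=d\,\dim\mathcal P_k(\mathbb R^{d-1})$. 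Hence the two injective images have dimensions $d\,\dim\mathcal P_k(\mathbb R^{d-1})$ and $\dim\mathcal P_k(\mathbb R^{d-1})$, summing to exactly $(d+1)\dim\mathcal P_k(\mathbb R^{d-1})=\dim\mathcal R_k(\partial K)$; since they are mutually orthogonal subspaces of $\mathcal R_k(\partial K)$ whose dimensions add up to the whole, their direct sum is all of $\mathcal R_k(\partial K)$, which is the claimed orthogonal decomposition. The only real subtlety beyond bookkeeping is the sign argument in part (a) and, in part (b), making the injectivity of $\mathbf v\mapsto\mathbf v\cdot\mathbf n|_{\partial K}$ fully rigorous from part (a) — one can do this by noting that $\mathbf v\cdot\mathbf n|_{e}$ for a fixed face $e$ is, up to the fixed nonzero constant $|\mathbf n|$, a scalar polynomial to which part (a) applies after checking that the components of $\mathbf v\in\boldsymbol{\mathcal P}_k^\bot(K)$ paired against $\mathbf n$ still lie in $\mathcal P_k^\bot(K)$.
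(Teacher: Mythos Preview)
Your argument is correct. The paper itself gives essentially no proof: it declares part (a) ``straightforward'' and for part (b) simply cites \cite{CoSa:SB}. Your factorization $q=\ell p$ with the sign argument on the simplex is the standard way to see (a), and your orthogonality-plus-dimension-count for (b) is exactly how the cited result is proved.

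One point worth tightening: your injectivity argument for $\mathbf v\mapsto \mathbf v\cdot\mathbf n|_{\partial K}$ is stated a bit loosely. The clean way to say it is this. For a fixed face $e$ with constant unit normal $\mathbf n_e$, the scalar function $\mathbf v\cdot\mathbf n_e$ (with $\mathbf n_e$ treated as a constant vector on all of $K$) is a constant-coefficient linear combination of the components of $\mathbf v$, hence lies in $\mathcal P_k^\bot(K)$, and it vanishes on $e$; part (a) then gives $\mathbf v\cdot\mathbf n_e\equiv 0$ on $K$. Repeating this for all $d+1$ faces, and using that the normals of a $d$-simplex span $\mathbb R^d$, forces $\mathbf v=0$. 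Your parenthetical ``componentwise'' remark does not quite work as stated (you only know $\mathbf v\cdot\mathbf n=0$, not that individual components vanish), and the ``up to $|\mathbf n|$'' comment is unnecessary since $\mathbf n$ is a unit vector; but your final sentence essentially has the right idea.
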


\begin{proof} Part (a) is straightforward. Part (b) is Lemma 4.1 of \cite{CoSa:SB}.
\end{proof}

\begin{lemma}\label{lemma:2.2} 
Let $P_k:L^2(\Omega) \to W_h$ be the orthogonal projection onto $W_h$. Then
\[
\| u-P_k u\|_{\partial K}\le C h_K^{1/2}\|\nabla u\|_K \qquad \forall K \in \mathcal T_h, \forall u\in H^1(\Omega).
\]
\end{lemma}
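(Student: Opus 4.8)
The plan is to reduce the estimate to a fixed reference element by an affine scaling argument. The first observation is that $P_k$ acts element by element: since $W_h$ imposes no interelement continuity, the restriction $(P_k u)|_K$ coincides with $\Pi_K u$, the $L^2(K)$-orthogonal projection of $u|_K$ onto $\mathcal P_k(K)$. Let $F_K:\widehat K\to K$ be the affine diffeomorphism from a fixed reference simplex $\widehat K$, so that $|\det DF_K|\sim h_K^d$, $\|DF_K\|\sim h_K$ and $\|DF_K^{-1}\|\sim h_K^{-1}$ by shape-regularity. Writing $\widehat u:=u\circ F_K$, and noting that because $F_K$ is affine the pullback commutes with $L^2$-projection onto polynomials, one has $\widehat{\Pi_K u}=\widehat\Pi\widehat u$, where $\widehat\Pi$ is the $L^2(\widehat K)$-projection onto $\mathcal P_k(\widehat K)$. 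Tracking the Jacobians of $F_K$ and of its restriction to $\partial\widehat K$ gives
\[
\|u-\Pi_K u\|_{\partial K}^2 \le C\,h_K^{d-1}\,\|\widehat u-\widehat\Pi\widehat u\|_{\partial\widehat K}^2,
\qquad
\|\nabla\widehat u\|_{\widehat K}^2\le C\,h_K^{2-d}\,\|\nabla u\|_K^2 ,
\]
so it suffices to prove the reference estimate $\|\widehat u-\widehat\Pi\widehat u\|_{\partial\widehat K}\le C\|\nabla\widehat u\|_{\widehat K}$ for all $\widehat u\in H^1(\widehat K)$.

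On $\widehat K$ I would argue as follows. By the trace theorem $H^1(\widehat K)\hookrightarrow L^2(\partial\widehat K)$,
\[
\|\widehat u-\widehat\Pi\widehat u\|_{\partial\widehat K}\le C\big(\|\widehat u-\widehat\Pi\widehat u\|_{\widehat K}+\|\nabla(\widehat u-\widehat\Pi\widehat u)\|_{\widehat K}\big).
\]
Since $k\ge0$ we have $\mathcal P_0(\widehat K)\subset\mathcal P_k(\widehat K)$, so $\widehat\Pi$ reproduces constants; writing $\widehat u_0$ for the mean value of $\widehat u$ over $\widehat K$, optimality of $\widehat\Pi$ in $L^2$ and the Poincar\'e inequality give $\|\widehat u-\widehat\Pi\widehat u\|_{\widehat K}\le\|\widehat u-\widehat u_0\|_{\widehat K}\le C\|\nabla\widehat u\|_{\widehat K}$. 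For the gradient term, $\nabla\widehat\Pi\widehat u=\nabla\widehat\Pi(\widehat u-\widehat u_0)$, and the inverse inequality on the finite-dimensional space $\mathcal P_k(\widehat K)$ together with $L^2$-stability of $\widehat\Pi$ and the Poincar\'e inequality yield
\[
\|\nabla\widehat\Pi\widehat u\|_{\widehat K}=\|\nabla\widehat\Pi(\widehat u-\widehat u_0)\|_{\widehat K}\le C\|\widehat\Pi(\widehat u-\widehat u_0)\|_{\widehat K}\le C\|\widehat u-\widehat u_0\|_{\widehat K}\le C\|\nabla\widehat u\|_{\widehat K},
\]
hence $\|\nabla(\widehat u-\widehat\Pi\widehat u)\|_{\widehat K}\le C\|\nabla\widehat u\|_{\widehat K}$ as well. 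Combining these bounds proves the reference estimate, and the scaling relations above then give $\|u-P_k u\|_{\partial K}^2\le C h_K^{d-1}\cdot h_K^{2-d}\|\nabla u\|_K^2=C h_K\|\nabla u\|_K^2$, which is the claim.

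The argument is entirely standard; the only point that needs care is the treatment of $\|\nabla\widehat\Pi\widehat u\|_{\widehat K}$, where one must not apply the inverse inequality to $\widehat\Pi\widehat u$ directly (that would produce $\|\widehat u\|_{\widehat K}$ rather than $\|\nabla\widehat u\|_{\widehat K}$) but first subtract the constant $\widehat u_0$ that $\widehat\Pi$ preserves. An essentially equivalent route avoids the reference element by using the scaled trace inequality $\|v\|_{\partial K}^2\le C(h_K^{-1}\|v\|_K^2+h_K\|\nabla v\|_K^2)$ directly with $v=u-\Pi_K u$, combined with the local estimates $\|u-\Pi_K u\|_K\le C h_K\|\nabla u\|_K$ and $\|\nabla\Pi_K u\|_K\le C\|\nabla u\|_K$ (the latter proved by the same subtract-the-mean/inverse-inequality device).
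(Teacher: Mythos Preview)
Your proof is correct and follows essentially the same approach as the paper: establish the estimate on the reference element using the trace theorem, a Poincar\'e-type inequality, and a finite-dimensional (inverse) inequality, then transfer it to $K$ by affine scaling. The paper states this outline tersely (``Bramble--Hilbert lemma (or a generalized Poincar\'e inequality), the trace theorem and an argument about finite dimensions\ldots then a scaling argument''); you have simply fleshed out each of these ingredients, including the careful subtraction of the mean before applying the inverse inequality, which is exactly the ``finite dimensions'' step the paper alludes to.
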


\begin{proof}
Using the Bramble-Hilbert lemma (or a generalized Poincar\'e inequality), the trace theorem and an argument about finite dimensions, it is easy to prove that
\[
\| \widehat u-\widehat P_k \widehat u\|_{\partial\widehat K} \le C \| \nabla \widehat u\|_{\widehat K} \qquad \forall \widehat u\in H^1(\widehat K),
\]
where $\widehat K$ is the reference element and $\widehat P_k:L^2(\widehat K)\to \mathcal P_k(\widehat K)$ is the orthogonal projector. The result follows then by a scaling argument.
\end{proof}

\begin{lemma}[A discrete coercivity estimate]\label{lemma:2.3}
For discrete triples $(\mathbf q_h,u_h,\widehat u_h)\in \boldsymbol V_h\times W_h \times M_h$ satisfying
\begin{subequations}\label{eq:2.2}
\begin{alignat}{4}\label{eq:2.2a}
(\mathrm{div}\,\mathbf q_h,w)_{\mathcal T_h}+\langle\tau(u_h-\widehat u_h),w\rangle_{\partial \mathcal T_h} &=0 & & \forall w\in W_h,\\
\label{eq:2.2b}
\langle \mathbf q_h\cdot\mathbf n+\tau(u_h-\widehat u_h),\widehat v\rangle_{\partial\mathcal T_h\setminus\Gamma}&=0& \qquad & \forall \widehat v \in M_h,
\end{alignat}
\end{subequations}
and $u_\star\in H^1(\Omega)$, we consider the quadratic form:
\[
Q:=(\kappa^{-1}\mathbf q_h,\mathbf q_h)_{\mathcal T_h}+\langle\tau(u_h-\widehat u_h),u_h-\widehat u_h\rangle_{\partial\mathcal T_h}+\|\nabla u_\star\|_\Omega^2-\langle u_\star,\mathbf q_h\cdot\mathbf n+\tau(u_h-\widehat u_h)\rangle_\Gamma.
\]
If \eqref{eq:2.1} holds, then there exist $C>0$ and $D>0$ such that if $\|\mathfrak h\tau\|_{L^\infty(\partial \mathcal T_h)}\le D$,
\[
Q \ge C \Big( \|\mathbf q_h\|_\Omega^2+\|\nabla u_\star\|_\Omega^2+\langle\tau(u_h-\widehat u_h),u_h-\widehat u_h\rangle_{\partial\mathcal T_h}\Big).
\]
\end{lemma}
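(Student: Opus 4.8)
The plan is to reduce the quadratic form $Q$ to a sum of manifestly nonnegative terms plus perturbations that can be absorbed, the point being that the only term of $Q$ with an indefinite sign is the boundary coupling term
\[
-\langle u_\star,\mathbf q_h\cdot\mathbf n+\tau(u_h-\widehat u_h)\rangle_\Gamma=-\langle u_\star,\widehat{\mathbf q}_h\cdot\mathbf n\rangle_\Gamma
\]
(using \eqref{eq:1.8f} on $\Gamma$). The heart of the argument is to rewrite this term in volumetric form using the two constraints \eqref{eq:2.2a}--\eqref{eq:2.2b}.

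First I would note that \eqref{eq:2.2b} forces $\widehat{\mathbf q}_h\cdot\mathbf n$ to be single-valued across interior faces: on an interior face $e=\partial K\cap\partial K'$ the jump $\widehat{\mathbf q}_h|_K\cdot\mathbf n_{\partial K}+\widehat{\mathbf q}_h|_{K'}\cdot\mathbf n_{\partial K'}$ belongs to $\mathcal P_k(e)$ and is $L^2(e)$-orthogonal to all of $\mathcal P_k(e)$, hence vanishes. Since $u_\star\in H^1(\Omega)$ has a single-valued trace on $\partial\mathcal T_h\setminus\Gamma$, the interior contributions cancel and $\langle u_\star,\widehat{\mathbf q}_h\cdot\mathbf n\rangle_\Gamma=\langle u_\star,\widehat{\mathbf q}_h\cdot\mathbf n\rangle_{\partial\mathcal T_h}$. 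Expanding $\widehat{\mathbf q}_h\cdot\mathbf n=\mathbf q_h\cdot\mathbf n+\tau(u_h-\widehat u_h)$, integrating $\langle\mathbf q_h\cdot\mathbf n,u_\star\rangle_{\partial K}$ by parts on each $K$ (legitimate since $\mathbf q_h|_K$ is a polynomial and $u_\star\in H^1(K)$), and using that $\mathrm{div}\,\mathbf q_h\in W_h$, so that $(\mathrm{div}\,\mathbf q_h,u_\star)_{\mathcal T_h}=(\mathrm{div}\,\mathbf q_h,P_ku_\star)_{\mathcal T_h}$, I would invoke \eqref{eq:2.2a} with $w=P_ku_\star$ to replace $(\mathrm{div}\,\mathbf q_h,P_ku_\star)_{\mathcal T_h}$ by $-\langle\tau(u_h-\widehat u_h),P_ku_\star\rangle_{\partial\mathcal T_h}$. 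This gives the key identity
\[
\langle u_\star,\widehat{\mathbf q}_h\cdot\mathbf n\rangle_\Gamma=(\mathbf q_h,\nabla u_\star)_{\mathcal T_h}+\langle\tau(u_h-\widehat u_h),u_\star-P_ku_\star\rangle_{\partial\mathcal T_h},
\]
so that, writing $T:=\langle\tau(u_h-\widehat u_h),u_h-\widehat u_h\rangle_{\partial\mathcal T_h}$,
\[
Q=(\kappa^{-1}\mathbf q_h,\mathbf q_h)_{\mathcal T_h}-(\mathbf q_h,\nabla u_\star)_{\mathcal T_h}+\|\nabla u_\star\|_\Omega^2+T-\langle\tau(u_h-\widehat u_h),u_\star-P_ku_\star\rangle_{\partial\mathcal T_h}.
\]

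Next I would estimate. Writing $(\mathbf q_h,\nabla u_\star)_{\mathcal T_h}=(\kappa^{-1/2}\mathbf q_h,\kappa^{1/2}\nabla u_\star)_{\mathcal T_h}$, Cauchy--Schwarz and Young's inequality with a free parameter $\varepsilon$ give
\[
(\kappa^{-1}\mathbf q_h,\mathbf q_h)_{\mathcal T_h}-(\mathbf q_h,\nabla u_\star)_{\mathcal T_h}+\|\nabla u_\star\|_\Omega^2\ge(1-\tfrac{\varepsilon}{2})(\kappa^{-1}\mathbf q_h,\mathbf q_h)_{\mathcal T_h}+\big(1-\tfrac{\kappa_{\max}}{2\varepsilon}\big)\|\nabla u_\star\|_\Omega^2,
\]
and both coefficients are positive for some $\varepsilon\in(\kappa_{\max}/2,2)$ exactly when $\kappa_{\max}<4$ --- this is where hypothesis \eqref{eq:2.1} enters. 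For the last term I would use Cauchy--Schwarz in the $\langle\tau\,\cdot\,,\cdot\,\rangle$ pairing together with Lemma \ref{lemma:2.2} and the shape-regularity bound $h_K\|\tau\|_{L^\infty(\partial K)}\le C\|\mathfrak h\tau\|_{L^\infty(\partial\mathcal T_h)}$ to get
\[
\langle\tau(u_\star-P_ku_\star),u_\star-P_ku_\star\rangle_{\partial\mathcal T_h}\le C\|\mathfrak h\tau\|_{L^\infty(\partial\mathcal T_h)}\|\nabla u_\star\|_\Omega^2,
\]
and then one more Young inequality bounds $|\langle\tau(u_h-\widehat u_h),u_\star-P_ku_\star\rangle_{\partial\mathcal T_h}|$ by $\tfrac14 T+C\|\mathfrak h\tau\|_{L^\infty(\partial\mathcal T_h)}\|\nabla u_\star\|_\Omega^2$. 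Collecting terms, fixing $\varepsilon$ as above, then choosing $D$ small enough that the $C\|\mathfrak h\tau\|_{L^\infty(\partial\mathcal T_h)}$ contribution is absorbed into the positive $\|\nabla u_\star\|_\Omega^2$ coefficient, and finally using $(\kappa^{-1}\mathbf q_h,\mathbf q_h)_{\mathcal T_h}\ge\kappa_{\max}^{-1}\|\mathbf q_h\|_\Omega^2$, yields the asserted lower bound.

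I expect the main obstacle to be precisely the first step: realizing that the indefinite boundary term can be converted into the clean volumetric identity above, which crucially uses conservativity of $\widehat{\mathbf q}_h\cdot\mathbf n$ from \eqref{eq:2.2b} and the orthogonality built into \eqref{eq:2.2a} tested against $P_ku_\star$. Once this identity is in hand, the remainder is routine Cauchy--Schwarz/Young bookkeeping, with the two structural hypotheses playing exactly the roles indicated ($\kappa_{\max}<4$ to keep the $\mathbf q_h$--$\nabla u_\star$ block positive definite, $\|\mathfrak h\tau\|_{L^\infty(\partial\mathcal T_h)}$ small to control the projection consistency term). A minor point to be careful about is that $P_ku_\star$ is double-valued on interior faces, but this causes no trouble since all these manipulations --- integration by parts and the evaluation of $\langle\tau(u_h-\widehat u_h),P_ku_\star\rangle$ --- are carried out element by element.
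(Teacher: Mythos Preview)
Your proposal is correct and follows essentially the same route as the paper's proof: both use \eqref{eq:2.2b} to extend the boundary term to all of $\partial\mathcal T_h$, integrate by parts element by element, and invoke \eqref{eq:2.2a} with $w=P_k u_\star$ to obtain the identity $\langle u_\star,\widehat{\mathbf q}_h\cdot\mathbf n\rangle_\Gamma=(\mathbf q_h,\nabla u_\star)_{\mathcal T_h}+\langle\tau(u_h-\widehat u_h),u_\star-P_ku_\star\rangle_{\partial\mathcal T_h}$, after which Lemma~\ref{lemma:2.2} and Young's inequality finish the job. The only cosmetic difference is that the paper bounds $|(\mathbf q_h,\nabla u_\star)_K|\le\|\mathbf q_h\|_K\|\nabla u_\star\|_K$ directly (with free parameters $\delta,\underline\delta$ chosen explicitly), whereas you insert the weighting $\kappa^{\pm 1/2}$ before applying Young; both parameterizations lead to the same feasibility condition $\kappa_{\max}<4$.
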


\begin{proof}
Condition \eqref{eq:2.2b} is equivalent to the fact that the discrete normal fluxes $\mathbf q_h\cdot\mathbf n+\tau(u_h-\widehat u_h)$ are single valued on internal faces/edges. Therefore
\[
E_h:=\langle u_\star,\mathbf q_h\cdot\mathbf n+\tau(u_h-\widehat u_h)\rangle_\Gamma=\sum_{K\in \mathcal T_h} E_{\partial K}, \mbox{ where } E_{\partial K}:=\langle u_\star,\mathbf q_h\cdot\mathbf n+\tau(u_h-\widehat u_h)\rangle_{\partial K}.
\]
If $P_k:L^2(\Omega)\to W_h$ is the orthogonal projection onto $W_h$ (as in Lemma \ref{lemma:2.2}), then
\begin{eqnarray*}
E_{\partial K} &=& \langle \mathbf q_h\cdot\mathbf n, u_\star\rangle_{\partial K}+\langle \tau(u_h-\widehat u_h),P_ku_\star\rangle_{\partial K}+\langle \tau(u_h-\widehat u_h),u_\star-P_ku_\star\rangle_{\partial K}\\
&=& \langle \mathbf q_h\cdot\mathbf n, u_\star\rangle_{\partial K}-\langle \mathrm{div}\,\mathbf q_h,P_k u_\star)_K+\langle \tau(u_h-\widehat u_h),u_\star-P_ku_\star\rangle_{\partial K}\\
&=& (\mathbf q_h,\nabla u_\star)_K+\langle \tau(u_h-\widehat u_h),u_\star-P_ku_\star\rangle_{\partial K},
\end{eqnarray*}
where we have used \eqref{eq:2.2a} and the fact that $\mathrm{div}\,\mathbf q_h \in \mathcal P_{k-1}(K)$. Therefore, by Lemma \ref{lemma:2.2} and Young's inequality
\begin{eqnarray*}
|E_{\partial K}|  & \le & \|\mathbf q_h\|_K \|\nabla u_\star\|_K + C (h_K\|\tau\|_{L^\infty(\partial K)})^{1/2} \langle \tau(u_h-\widehat u_h),u_h-\widehat u_h\rangle_{\partial K}^{1/2}\|\nabla u_\star\|_K\\
& \le & \delta^{-1} \|\mathbf q_h\|_K^2+ C^2\|\mathfrak h\tau\|_{L^\infty(\partial\mathcal T_h)} \underline\delta^{-1}\langle\tau(u_h-\widehat u_h),u_h-\widehat u_h\rangle_{\partial K} + \smallfrac14(\delta+\underline\delta)\|\nabla u_\star\|^2_K,
\end{eqnarray*}
for arbitrary $\delta, \underline\delta>0$. Therefore
\begin{eqnarray*}
Q &=& (\kappa^{-1}\mathbf q_h,\mathbf q_h)+\langle\tau(u_h-\widehat u_h),u_h-\widehat u_h\rangle_{\partial\mathcal T_h}+\|\nabla u_\star\|_\Omega^2-E_h \\
&\ge & (\kappa_{\max}^{-1}-\delta^{-1}) \|\mathbf q_h\|_\Omega^2+ (1-C^2\|\mathfrak h\tau\|_{L^\infty(\partial \mathcal T_h)}\underline\delta^{-1}) \langle\tau(u_h-\widehat u_h),u_h-\widehat u_h\rangle_{\partial\mathcal T_h}\\
& & +(1-\smallfrac14(\delta+\underline\delta))\|\nabla u_\star\|_\Omega^2\\
&\ge & \frac{4-\kappa_{\max}}{\kappa_{\max}(4+\kappa_{\max})} \|\mathbf q_h\|_\Omega^2+ \frac34\langle\tau(u_h-\widehat u_h),u_h-\widehat u_h\rangle_{\partial\mathcal T_h}+\frac{4-\kappa_{\max}}{16}\|\nabla u_\star\|_\Omega^2,
\end{eqnarray*}
where we have chosen
\[
\delta:=\frac{4+\kappa_{\max}}2 \qquad \underline\delta:=\frac{4-\kappa_{\max}}4, \qquad \|\mathfrak h\tau\|_{L^\infty(\partial \mathcal T_h)}\le \frac{4-\kappa_{\max}}{16 C^2}.
\]
This finishes the proof.
\end{proof}

\begin{proposition}
If \eqref{eq:2.1} holds, then there exists  $D>0$ such that for all $\|\mathfrak h\tau\|_{L^\infty(\partial \mathcal T_h)}\le D$, the system \eqref{eq:1.8} is uniquely solvable. In particular, if $\tau_{\max}\le C$, the system is uniquely solvable for $h$ small enough.
\end{proposition}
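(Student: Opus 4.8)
The plan is to use that \eqref{eq:1.8} is a square linear system, so it suffices to prove injectivity: a solution $(\mathbf q_h,u_h,\widehat u_h,\varphi_h)$ of the homogeneous system (data $f=0$, $\beta_0=0$, $\beta_1=0$) must be zero. Fix such a solution. First I would collect the cheap consequences: \eqref{eq:1.8c} is exactly \eqref{eq:2.2b}; integrating \eqref{eq:1.8b} by parts element-by-element and using \eqref{eq:1.8f} gives \eqref{eq:2.2a}; and, since the compatibility condition \eqref{eq:1.2} holds trivially, Lemma \ref{lemma:1.2} gives $\int_\Gamma\varphi_h=0$, hence $\omega(\varphi_h,\varphi_h)=\langle\mathcal W\varphi_h,\varphi_h\rangle_\Gamma$. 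Next I would produce the HDG energy identity: testing \eqref{eq:1.8a} with $\mathbf r=\mathbf q_h$ and \eqref{eq:1.8b} with $w=u_h$, adding, integrating by parts on each element, using \eqref{eq:1.8f} and then \eqref{eq:1.8c} with $\widehat v=\widehat u_h$, all interior-skeleton terms cancel and one is left with
\[
(\kappa^{-1}\mathbf q_h,\mathbf q_h)_{\mathcal T_h}+\langle\tau(u_h-\widehat u_h),u_h-\widehat u_h\rangle_{\partial\mathcal T_h}=-\langle\widehat u_h,\widehat{\mathbf q}_h\cdot\mathbf n\rangle_\Gamma.
\]

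The key move is the choice of the function $u_\star$ to be fed into the quadratic form $Q$ of Lemma \ref{lemma:2.3}: I would take $u_\star:=-\mathcal D\varphi_h|_\Omega$, which lies in $H^1(\Omega)$ (since $\varphi_h\in Y_h\subset H^{1/2}(\Gamma)$) and whose interior trace is $(\smallfrac12-\mathcal K)\varphi_h$ by the jump relations for $\mathcal D$. Using the energy identity to eliminate the volume terms in $Q$ yields $Q=\|\nabla u_\star\|_\Omega^2-\langle u_\star+\widehat u_h,\widehat{\mathbf q}_h\cdot\mathbf n\rangle_\Gamma$. Now \eqref{eq:1.8d}, tested with the element of $M_h$ that equals $\widehat{\mathbf q}_h\cdot\mathbf n$ on $\Gamma$ (admissible, since $\widehat{\mathbf q}_h\cdot\mathbf n$ is $\mathcal P_k$ on each boundary face), replaces $\langle\widehat u_h,\widehat{\mathbf q}_h\cdot\mathbf n\rangle_\Gamma$ by $\langle(\smallfrac12+\mathcal K)\varphi_h,\widehat{\mathbf q}_h\cdot\mathbf n\rangle_\Gamma$, so that $\langle u_\star+\widehat u_h,\widehat{\mathbf q}_h\cdot\mathbf n\rangle_\Gamma=\langle\varphi_h,\widehat{\mathbf q}_h\cdot\mathbf n\rangle_\Gamma$; and \eqref{eq:1.8e} with $\phi=\varphi_h$ identifies this with $\omega(\varphi_h,\varphi_h)=\langle\mathcal W\varphi_h,\varphi_h\rangle_\Gamma$. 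Hence $Q=\|\nabla\mathcal D\varphi_h\|_\Omega^2-\langle\mathcal W\varphi_h,\varphi_h\rangle_\Gamma$. The standard representation of the hypersingular form as the free-space Dirichlet energy of the double-layer potential (see Appendix \ref{sec:A}; the condition $\int_\Gamma\varphi_h=0$ makes it legitimate also when $d=2$) gives $\langle\mathcal W\varphi_h,\varphi_h\rangle_\Gamma=\|\nabla\mathcal D\varphi_h\|_\Omega^2+\|\nabla\mathcal D\varphi_h\|_{\Omega_+}^2$, whence $Q=-\|\nabla\mathcal D\varphi_h\|_{\Omega_+}^2\le 0$.

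On the other hand, since \eqref{eq:2.1} holds and \eqref{eq:2.2} is satisfied, Lemma \ref{lemma:2.3} supplies $D>0$ such that, whenever $\|\mathfrak h\tau\|_{L^\infty(\partial\mathcal T_h)}\le D$, $Q\ge C\big(\|\mathbf q_h\|_\Omega^2+\|\nabla u_\star\|_\Omega^2+\langle\tau(u_h-\widehat u_h),u_h-\widehat u_h\rangle_{\partial\mathcal T_h}\big)\ge 0$. Combining the two bounds forces $Q=0$, so that $\mathbf q_h=0$, $\tau(u_h-\widehat u_h)\equiv 0$ on $\partial\mathcal T_h$, $\mathcal D\varphi_h$ is constant in $\Omega$, and (from $\|\nabla\mathcal D\varphi_h\|_{\Omega_+}=0$ together with the decay at infinity) $\mathcal D\varphi_h\equiv 0$ in $\Omega_+$; the jump relation for $\mathcal D$ then shows $\varphi_h$ is constant, and $\int_\Gamma\varphi_h=0$ gives $\varphi_h=0$. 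To finish, I would run the standard HDG injectivity argument: with $\mathbf q_h=0$, \eqref{eq:1.8a} integrated by parts reads $(\nabla u_h,\mathbf r)_K+\langle\widehat u_h-u_h,\mathbf r\cdot\mathbf n\rangle_{\partial K}=0$ for all $\mathbf r\in\boldsymbol{\mathcal P}_k(K)$; testing with $\mathbf r\in\boldsymbol{\mathcal P}_k^\bot(K)$, using Lemma \ref{lemma:2.1}(b) and then Lemma \ref{lemma:2.1}(a) on an edge of $K$ where $\tau>0$ (on which $u_h=\widehat u_h$) gives $u_h=\widehat u_h$ on $\partial K$, and $\mathbf r=\nabla u_h$ then gives $\nabla u_h=0$ on $K$; single-valuedness of $\widehat u_h$ across interior faces and connectedness of $\Omega$ make $u_h\equiv\widehat u_h\equiv c$ for a constant $c$, and finally \eqref{eq:1.8d}, which now reads $\langle c,\widehat v\rangle_\Gamma=0$ for all $\widehat v\in M_h$, forces $c=0$. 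This proves unique solvability for $\|\mathfrak h\tau\|_{L^\infty(\partial\mathcal T_h)}\le D$; and since $\|\mathfrak h\tau\|_{L^\infty(\partial\mathcal T_h)}\le h\,\tau_{\max}$, the bound $\tau_{\max}\le C$ makes it hold whenever $h\le D/C$.

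The step I expect to be the genuine obstacle, rather than bookkeeping, is the choice of $u_\star$: it must be such that, after invoking the Galerkin coupling equation \eqref{eq:1.8d}, the pairing $\langle u_\star+\widehat u_h,\cdot\rangle_\Gamma$ reduces to $\langle\varphi_h,\cdot\rangle_\Gamma$ — since $\varphi_h$ is the only boundary function with which the hypersingular equation \eqref{eq:1.8e} may be tested — while at the same time $\|\nabla u_\star\|_\Omega^2$ is dominated by $\langle\mathcal W\varphi_h,\varphi_h\rangle_\Gamma$. The interior double-layer potential of $\varphi_h$ achieves both, the matching hinging on the elementary identity $\gamma^+\mathcal D-\gamma^-\mathcal D=\mathrm{Id}$. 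Everything else is either delivered by Lemma \ref{lemma:2.3} — which is where the hypotheses $\kappa_{\max}<4$ and $\|\mathfrak h\tau\|_{L^\infty(\partial\mathcal T_h)}$ small are used — or is the routine HDG uniqueness computation.
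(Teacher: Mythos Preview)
Your proof is correct and follows essentially the same route as the paper's. The paper tests all five equations of \eqref{eq:1.8} simultaneously and sets $u_\star:=\mathcal D\varphi_h$ (differing from your choice only by a sign, which is immaterial since the proof of Lemma~\ref{lemma:2.3} bounds $|E_h|$), arriving at the same identity $Q+\|\nabla\mathcal D\varphi_h\|_{\Omega_+}^2=0$; the concluding HDG injectivity argument via Lemma~\ref{lemma:2.1} and the use of \eqref{eq:1.8d} to kill the remaining constant are identical.
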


\begin{proof}
We only need to prove that the homogeneous system only admits the trivial solution. Let then $(\mathbf q_h,u_h,\widehat u_h,\varphi_h)$ be a solution of \eqref{eq:1.8} with zero right hand side. By Lemma \ref{lemma:1.2} it follows that $\omega(\varphi_h,\psi)=\langle\mathcal W\varphi_h,\psi\rangle_\Gamma$. We now test equations \eqref{eq:1.8} with $\mathbf q_h$, $u_h$, $\widehat u_h$, $ -\mathbf q_h\cdot\mathbf n-\tau(u_h-\widehat u_h)$ and $\varphi_h$ respectively, add the equations, and simplify the result to obtain
\[
(\kappa^{-1}\mathbf q_h,\mathbf q_h)_{\mathcal T_h}+\langle \tau(u_h-\widehat u_h),u_h-\widehat u_h\rangle_{\partial\mathcal T_h} + \langle -\smallfrac12 \varphi_h+\mathcal K \varphi_h,\mathbf q_h\cdot\mathbf n+\tau(u_h-\widehat u_h)\rangle_\Gamma+\langle \mathcal W\varphi_h,\varphi_h\rangle_\Gamma=0.
\]
Let now $u_\star:=\mathcal D\varphi_h$. Using then \eqref{eq:A.11b} and Proposition \ref{prop:A.6}, it follows that
\[
(\kappa^{-1}\mathbf q_h,\mathbf q_h)_{\mathcal T_h}+\langle \tau(u_h-\widehat u_h),u_h-\widehat u_h\rangle_{\partial\mathcal T_h}+\langle \gamma^-u_\star,\mathbf q_h\cdot\mathbf n+\tau(u_h-\widehat u_h)\rangle_\Gamma+\|\nabla u_\star\|_{\mathbb R^d\setminus\Gamma}^2=0,
\]
where $\gamma^-u_\star$ makes reference to the trace of $u_\star|_\Omega$ on $\Gamma$. We are now in the hypotheses of Lemma \ref{lemma:2.3}, which implies that $\mathbf q_h=\mathbf 0$,   $\nabla u_\star=\mathbf 0$, and $\tau(u_h-\widehat u_h)=0$ on $\partial K$ for all $K$ . By Proposition \ref{prop:A.1} and the fact that $\int_\Gamma \varphi_h=0$, this implies that $\varphi_h=0$.

Going back to \eqref{eq:1.8a}, it follows that
\begin{equation}\label{eq:2.3}
(\nabla u_h,\mathbf r)_K+\langle u_h-\widehat u_h,\mathbf r\cdot\mathbf n\rangle_{\partial K}=0\quad \forall \mathbf r\in \boldsymbol{\mathcal P}_k(K) \quad\forall K
\end{equation}
and therefore
\[
\langle u_h-\widehat u_h,\mathbf r\cdot\mathbf n\rangle_{\partial K}=0\quad \forall \mathbf r\in \boldsymbol{\mathcal P}_k^\bot(K) \quad\forall K.
\]
By Lemma \ref{lemma:2.1}(b), there exists $q \in \mathcal P_k^\bot(K)$ such that $u_h-\widehat u_h=q$ of $\partial K$. Since $\tau(u_h-\widehat u_h)=0$ and $\tau\neq 0$ on at least one $e\in \mathcal E(K)$, it follows by Lemma \ref{lemma:2.1}(a) that $q=0$ and therefore $u_h=\widehat u_h$ on $\partial\mathcal T_h$. Using this information in \eqref{eq:2.3} and testing with $\mathbf r=\nabla u_h$, it follows that $u_h$ is piecewise constant. At the same time, $u_h=\widehat u_h$ on $\partial\mathcal T_h$, which implies that both $u_h$ and $\widehat u_h$ are constant. Testing \eqref{eq:1.8d} with $\widehat v=1$ implies that this constant value has to vanish. This finishes the proof of uniqueness.
\end{proof} 

\section{Estimates by energy arguments}\label{sec:3}

Convergence analysis of the coupled HDG-BEM scheme follows the main lines of the projection-based analysis of HDG methods \cite{CoGoSa:2010} (see also \cite{CoGuSa:TA}). We start by recalling the local HDG projection \cite{CoGoSa:2010}. Given $(\mathbf q,u)$, we define $(\boldsymbol\Pi \mathbf q,\Pi u)\in \boldsymbol V_h\times W_h$ as the solution of the local problems
\begin{subequations}\label{eq:3.1}
\begin{alignat}{4}
\label{eq:3.1a}
(\boldsymbol\Pi\mathbf q,\mathbf r)_K &= (\mathbf q,\mathbf r)_K & \qquad & \forall \mathbf r \in \boldsymbol{\mathcal P}_{k-1}(K)\\
\label{eq:3.1b}
(\Pi u,w)_K &=(u,w) & & \forall w\in \mathcal P_{k-1}(K)\\
\label{eq:3.1c}
\langle \boldsymbol\Pi\mathbf q\cdot\mathbf n+\tau\Pi u,\widehat v\rangle_{\partial K} &=\langle \mathbf q\cdot \mathbf n+\tau u,\widehat v\rangle_{\partial K} & & \forall \widehat v \in \mathcal R_k(\partial K),
\end{alignat}
\end{subequations}
for all $K\in \mathcal T_h$. We also consider the orthogonal projections $P:\prod_{e\in \mathcal E_h} L^2(e) \to M_h$ and  $P_\Gamma :H^{1/2}(\Gamma) \to Y_h$. The error analysis is carried out by comparing the discrete solution with the projection of the exact solution, i.e., in terms of the quantities:
\[
\boldsymbol\varepsilon_h^q:=\boldsymbol\Pi\mathbf q-\mathbf q_h, \quad \varepsilon_h^u:=\Pi u-u_h, \quad \widehat{\varepsilon_h^u}:=P u-\widehat u_h, \qquad \varepsilon_h^\varphi:=P_\Gamma\varphi-\varphi_h.
\]
The error in the normal flux (recall \eqref{eq:1.8f}) is
\[
\widehat\varepsilon_h:=\boldsymbol\varepsilon_h^q\cdot\mathbf n+\tau(\varepsilon_h^u-\widehat{\varepsilon_h^u})=
\widehat\pi-\widehat{\mathbf q}_h\cdot\mathbf n, \quad\mbox{where}\quad \widehat\pi:=\boldsymbol\Pi \mathbf q\cdot \mathbf n+\tau(\Pi u-P u).
\]

\begin{lemma}\label{lemma:3.1}
The following inequalities hold for all $K$ and $e\in \mathcal E(K)$:
\begin{eqnarray}
h_e \|\widehat\pi-\mathbf q\cdot\mathbf n\|_e^2 & \le & C \Big( \|\mathbf q-\boldsymbol\Pi\mathbf q\|_K^2+h_K^2\|\nabla (\mathbf P_k\mathbf q-\mathbf q)\|_K^2\Big),\\
 \|\widehat\pi-\mathbf q\cdot\mathbf n\|_e^2 & \le & C h_K \|\nabla \mathbf q\|_K^2,
\end{eqnarray}
where $\mathbf P_k\mathbf q$ is the best $L^2(\Omega)^d$ approximation of $\mathbf q$ in $\boldsymbol V_h$.
\end{lemma}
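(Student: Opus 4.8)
The plan is to first show that, although $\widehat\pi$ is built from \emph{both} components of the HDG projection, on each face $e\in\mathcal E(K)$ it coincides with an ordinary $L^2$-projection of $\mathbf q\cdot\mathbf n$, after which the two estimates become standard approximation bounds. The point is that the $\tau(\Pi u-Pu)$ contribution disappears when tested against $\mathcal R_k(\partial K)$: since $P$ acts face by face as the $L^2(e)$-orthogonal projection onto $\mathcal P_k(e)$ and $\tau$ is constant on each face, we have $\tau\widehat v\in\mathcal R_k(\partial K)$ for every $\widehat v\in\mathcal R_k(\partial K)$, hence $\langle\tau(u-Pu),\widehat v\rangle_{\partial K}=0$. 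Combining this with the definition \eqref{eq:3.1c} of the projection gives
\[
\langle\widehat\pi,\widehat v\rangle_{\partial K}=\langle\boldsymbol\Pi\mathbf q\cdot\mathbf n+\tau\Pi u,\widehat v\rangle_{\partial K}-\langle\tau Pu,\widehat v\rangle_{\partial K}=\langle\mathbf q\cdot\mathbf n,\widehat v\rangle_{\partial K}\qquad\forall\widehat v\in\mathcal R_k(\partial K).
\]
Because $\widehat\pi|_e=\boldsymbol\Pi\mathbf q|_e\cdot\mathbf n+\tau|_e(\Pi u|_e-Pu|_e)\in\mathcal P_k(e)$, this identity says precisely that $\widehat\pi|_e$ is the $L^2(e)$-orthogonal projection of $\mathbf q\cdot\mathbf n|_e$ onto $\mathcal P_k(e)$ (in particular $\widehat\pi$ does not depend on $\tau$), so
\[
\|\widehat\pi-\mathbf q\cdot\mathbf n\|_e\le\|g-\mathbf q\cdot\mathbf n\|_e\qquad\forall g\in\mathcal P_k(e).
\]
I regard this reduction --- recognizing the cancellation of the $\tau(\Pi u-Pu)$ term --- as the only genuinely HDG-specific step; everything afterwards is routine scaling.

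For the first inequality I would pick the competitor $g=\boldsymbol\Pi\mathbf q\cdot\mathbf n|_e\in\mathcal P_k(e)$ and use $|\mathbf n|=1$ on $e$ to get $\|\widehat\pi-\mathbf q\cdot\mathbf n\|_e\le\|\boldsymbol\Pi\mathbf q-\mathbf q\|_e$. Splitting $\boldsymbol\Pi\mathbf q-\mathbf q=(\boldsymbol\Pi\mathbf q-\mathbf P_k\mathbf q)+(\mathbf P_k\mathbf q-\mathbf q)$, the first summand is a polynomial, so a discrete (inverse) trace inequality gives $\|\boldsymbol\Pi\mathbf q-\mathbf P_k\mathbf q\|_e^2\le Ch_K^{-1}\|\boldsymbol\Pi\mathbf q-\mathbf P_k\mathbf q\|_K^2$, while for the second summand the scaled trace inequality $\|v\|_e^2\le C(h_K^{-1}\|v\|_K^2+h_K\|\nabla v\|_K^2)$ applies. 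Since $\mathbf P_k\mathbf q$ is the best $L^2(K)^d$ approximation in $\boldsymbol V_h$ and $\boldsymbol\Pi\mathbf q\in\boldsymbol V_h$, we have $\|\mathbf P_k\mathbf q-\mathbf q\|_K\le\|\boldsymbol\Pi\mathbf q-\mathbf q\|_K$, hence also $\|\boldsymbol\Pi\mathbf q-\mathbf P_k\mathbf q\|_K\le 2\|\boldsymbol\Pi\mathbf q-\mathbf q\|_K$; collecting terms and multiplying by $h_e$, which is comparable to $h_K$ by shape regularity, yields
\[
h_e\|\widehat\pi-\mathbf q\cdot\mathbf n\|_e^2\le C\big(\|\mathbf q-\boldsymbol\Pi\mathbf q\|_K^2+h_K^2\|\nabla(\mathbf P_k\mathbf q-\mathbf q)\|_K^2\big).
\]

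For the second inequality I would instead use a constant competitor: because $\mathbf n$ is a fixed vector on the flat face $e$, $g=\bar{\mathbf q}\cdot\mathbf n\in\mathcal P_0(e)\subset\mathcal P_k(e)$ with $\bar{\mathbf q}$ the $L^2(K)$-average of $\mathbf q$, so $\|\widehat\pi-\mathbf q\cdot\mathbf n\|_e\le\|\mathbf q-\bar{\mathbf q}\|_e$; the scaled trace inequality followed by the Poincar\'e estimate $\|\mathbf q-\bar{\mathbf q}\|_K\le Ch_K\|\nabla\mathbf q\|_K$ then gives $\|\widehat\pi-\mathbf q\cdot\mathbf n\|_e^2\le Ch_K\|\nabla\mathbf q\|_K^2$. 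The only points requiring care are that the trace, inverse and Poincar\'e constants be uniform over the mesh (guaranteed by shape regularity) and that $h_e\simeq h_K$ for $e\in\mathcal E(K)$; no regularity of $\mathbf q$ beyond $H^1(K)$ is used.
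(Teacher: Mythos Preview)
Your argument is correct and follows the same strategy as the paper: identify $\widehat\pi|_e$ as the $L^2(e)$--projection of $\mathbf q\cdot\mathbf n$ onto $\mathcal P_k(e)$, then bound by comparison with a convenient competitor and a trace inequality. The only difference is tactical: for the first estimate the paper takes $g=(\mathbf P_k\mathbf q)\cdot\mathbf n$ directly as competitor, so a single scaled trace inequality on $\mathbf P_k\mathbf q-\mathbf q$ gives
\[
h_e\|\widehat\pi-\mathbf q\cdot\mathbf n\|_e^2\le h_e\|\mathbf P_k\mathbf q-\mathbf q\|_e^2\le C\big(\|\mathbf P_k\mathbf q-\mathbf q\|_K^2+h_K^2\|\nabla(\mathbf P_k\mathbf q-\mathbf q)\|_K^2\big),
\]
after which $\|\mathbf P_k\mathbf q-\mathbf q\|_K\le\|\boldsymbol\Pi\mathbf q-\mathbf q\|_K$ finishes the job. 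Your choice $g=(\boldsymbol\Pi\mathbf q)\cdot\mathbf n$ forces the extra splitting and the discrete inverse trace step on $\boldsymbol\Pi\mathbf q-\mathbf P_k\mathbf q$; it works, but it is a detour you can avoid by picking the $L^2$--projection as competitor from the start.
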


\begin{proof}
Note also that by definition of the projections
\begin{equation}\label{eq:3.2}
\langle \widehat\pi ,\widehat v\rangle_{\partial K}=\langle \mathbf q \cdot\mathbf n,\widehat v\rangle_{\partial K} \qquad \forall \widehat v\in \mathcal R_k(\partial K) \quad \forall K.
\end{equation}
This shows that $\widehat\pi|_e$ is the  best $L^2(e)$ approximation of $\mathbf q\cdot \mathbf n_e$ on $\mathbb P_k(e)$. Using then a local trace inequality, we can easily bound for $e\in \mathcal E(K)$
\begin{equation}\label{eq:3.14}
h_e\| \widehat\pi-\mathbf q\cdot\mathbf n\|_e^2 \le h_e \| \mathbf P_k\mathbf q-\mathbf q\|_e^2\le C \Big( \|\mathbf P_k \mathbf q-\mathbf q\|_K^2 + h_K^2\|\nabla (\mathbf P_k\mathbf q-\mathbf q)\|_K^2\Big).
\end{equation}
The second inequality follows from a similar argument, comparing with $\mathbf P_0\mathbf q$ instead of $\mathbf P_k\mathbf q$.
\end{proof}

For simplicity, in some of the forthcoming arguments we will shorten $\widetilde{\mathcal K}:=\frac12\mathcal I+\mathcal K$. The following three discrete functionals will be relevant in the sequel as well:
\begin{subequations}\label{eq:3.2c}
\begin{alignat}{4}
c_1(\mathbf r)&:=(\kappa^{-1}(\boldsymbol\Pi\mathbf q-\mathbf q),\mathbf r)_{\mathcal T_h}, \\ 
c_2(\widehat v)&:=\langle\widetilde{\mathcal K}(P_\Gamma\varphi-\varphi),\widehat v\rangle_\Gamma,\\
c_3(\phi)&:=-\langle\widehat\pi-\mathbf q\cdot\mathbf n,\phi\rangle_\Gamma+\omega(P_\Gamma\varphi-\varphi,\phi).
\end{alignat}
\end{subequations}

\begin{proposition}[Energy inequality]\label{prop:3.1}
If \eqref{eq:2.1} holds, then there exists  $h_0$ such that for all $h\le h_0$,
\begin{equation}\label{eq:3.2b}
\|\boldsymbol\varepsilon_h^q\|_\Omega^2 + \langle\tau(\varepsilon_h^u-\widehat{\varepsilon_h^u}),\varepsilon_h^u-\widehat{\varepsilon_h^u}\rangle_{\partial\mathcal T_h}+\|\varepsilon_h^\varphi\|_{H^{1/2}(\Gamma)}^2
\le C \Big(c_1(\boldsymbol\varepsilon_h^q)+c_2(\widehat{\varepsilon_h})+c_3(\varepsilon_h^\varphi)\Big).
\end{equation}
\end{proposition}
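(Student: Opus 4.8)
The plan is to derive the energy inequality by testing the \emph{error equations} with the error functions themselves and invoking the discrete coercivity estimate of Lemma~\ref{lemma:2.3}. First I would write down the equations satisfied by the errors $(\boldsymbol\varepsilon_h^q,\varepsilon_h^u,\widehat{\varepsilon_h^u},\varepsilon_h^\varphi)$. Since the projected exact solution $(\boldsymbol\Pi\mathbf q,\Pi u,Pu,P_\Gamma\varphi)$ does not solve \eqref{eq:1.8} exactly, subtracting \eqref{eq:1.8} from the equations that $(\boldsymbol\Pi\mathbf q,\Pi u,Pu,P_\Gamma\varphi)$ \emph{does} satisfy (using the defining properties \eqref{eq:3.1} of the HDG projection, the identity \eqref{eq:3.2}, and the consistency of the exact solution with \eqref{eq:1.6}) produces a perturbed system whose right-hand sides are exactly the functionals $c_1,c_2,c_3$ from \eqref{eq:3.2c}. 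A key point here is that the orthogonality relations \eqref{eq:3.1a}--\eqref{eq:3.1b} kill the volume terms $(u_h,\operatorname{div}\mathbf r)$ and $(\mathbf q_h,\nabla w)$ up to the projection defect handled by $c_1$, while \eqref{eq:3.1c}/\eqref{eq:3.2} ensure that the numerical-flux error $\widehat\varepsilon_h$ has the clean form recorded just before the lemma statement, so that the interelement consistency equation \eqref{eq:1.8c} and the coupling equations \eqref{eq:1.8d}--\eqref{eq:1.8e} transfer their defects to $c_2$ and $c_3$.

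Next I would test the error equations with $\mathbf r=\boldsymbol\varepsilon_h^q$, $w=\varepsilon_h^u$, $\widehat v=\widehat{\varepsilon_h^u}$, $\widehat v=-\boldsymbol\varepsilon_h^q\cdot\mathbf n-\tau(\varepsilon_h^u-\widehat{\varepsilon_h^u})=-\widehat\varepsilon_h$ (in the analog of \eqref{eq:1.8c}) and $\phi=\varepsilon_h^\varphi$ (in the analog of \eqref{eq:1.8e}), then add everything. Just as in the proof of the previous Proposition, the algebra collapses: the cross terms on $\partial\mathcal T_h$ telescope, the single-valuedness of the numerical flux on interior faces removes the interior skeleton contributions, and what survives is
\[
(\kappa^{-1}\boldsymbol\varepsilon_h^q,\boldsymbol\varepsilon_h^q)_{\mathcal T_h}+\langle\tau(\varepsilon_h^u-\widehat{\varepsilon_h^u}),\varepsilon_h^u-\widehat{\varepsilon_h^u}\rangle_{\partial\mathcal T_h}+\langle-\smallfrac12\varepsilon_h^\varphi+\mathcal K\varepsilon_h^\varphi,\widehat\varepsilon_h\rangle_\Gamma+\langle\mathcal W\varepsilon_h^\varphi,\varepsilon_h^\varphi\rangle_\Gamma = c_1(\boldsymbol\varepsilon_h^q)+c_2(\widehat\varepsilon_h)+c_3(\varepsilon_h^\varphi),
\]
where I have used Lemma~\ref{lemma:1.2} (or its error version: $\int_\Gamma\varepsilon_h^\varphi=0$, so $\omega(\varepsilon_h^\varphi,\varepsilon_h^\varphi)=\langle\mathcal W\varepsilon_h^\varphi,\varepsilon_h^\varphi\rangle_\Gamma$). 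One must check carefully that testing the discrete flux-consistency equation with $-\widehat\varepsilon_h$ is legitimate, i.e.\ that $\boldsymbol\varepsilon_h^q\cdot\mathbf n+\tau(\varepsilon_h^u-\widehat{\varepsilon_h^u})$ restricted to $\partial\mathcal T_h\setminus\Gamma$ lies in $M_h$ — which it does, since it is piecewise polynomial of degree $k$ on each face.

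Then, exactly as in the proof of the Proposition preceding Section~\ref{sec:3}, I set $u_\star:=\mathcal D\varepsilon_h^\varphi$ and use the jump relations for the double-layer potential (Section~\ref{sec:A.4}, equation \eqref{eq:A.11b}) together with Proposition~\ref{prop:A.6} to rewrite $\langle-\smallfrac12\varepsilon_h^\varphi+\mathcal K\varepsilon_h^\varphi,\widehat\varepsilon_h\rangle_\Gamma+\langle\mathcal W\varepsilon_h^\varphi,\varepsilon_h^\varphi\rangle_\Gamma$ as $-\langle\gamma^-u_\star,\widehat\varepsilon_h\rangle_\Gamma+\|\nabla u_\star\|_{\mathbb R^d\setminus\Gamma}^2$. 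The left-hand side of the tested identity then matches the quadratic form $Q$ of Lemma~\ref{lemma:2.3} with $(\mathbf q_h,u_h,\widehat u_h)$ replaced by $(\boldsymbol\varepsilon_h^q,\varepsilon_h^u,\widehat{\varepsilon_h^u})$ and $u_\star$ as above — note that the hypotheses \eqref{eq:2.2} hold because the errors satisfy the homogeneous versions of \eqref{eq:2.2a}--\eqref{eq:2.2b} coming from the projection orthogonality. Applying Lemma~\ref{lemma:2.3} (valid once $\|\mathfrak h\tau\|_{L^\infty(\partial\mathcal T_h)}\le D$, which holds for $h\le h_0$ small) bounds $\|\boldsymbol\varepsilon_h^q\|_\Omega^2+\|\nabla u_\star\|_\Omega^2+\langle\tau(\varepsilon_h^u-\widehat{\varepsilon_h^u}),\varepsilon_h^u-\widehat{\varepsilon_h^u}\rangle_{\partial\mathcal T_h}$ from below by $c_1(\boldsymbol\varepsilon_h^q)+c_2(\widehat\varepsilon_h)+c_3(\varepsilon_h^\varphi)$; finally, $\|\nabla u_\star\|_\Omega^2=\|\nabla\mathcal D\varepsilon_h^\varphi\|_\Omega^2$ controls $\|\varepsilon_h^\varphi\|_{H^{1/2}(\Gamma)}^2$ up to a constant, by coercivity of $\mathcal W$ on $H^{1/2}_0(\Gamma)$ (Proposition~\ref{prop:A.1}) combined with $\int_\Gamma\varepsilon_h^\varphi=0$, which yields \eqref{eq:3.2b}. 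The main obstacle I anticipate is bookkeeping: getting the perturbed error system right so that its data are \emph{precisely} $c_1,c_2,c_3$ (in particular matching the argument $\widehat\varepsilon_h$ in $c_2$ with what the telescoping produces), and verifying that the error triple genuinely satisfies the structural constraints \eqref{eq:2.2} required by Lemma~\ref{lemma:2.3}; the coercivity input itself is already packaged in that lemma.
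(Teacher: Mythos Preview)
Your approach is exactly the one in the paper: derive the error equations, test them with $(\boldsymbol\varepsilon_h^q,\varepsilon_h^u,\widehat{\varepsilon_h^u},-\widehat\varepsilon_h,\varepsilon_h^\varphi)$, rewrite the boundary terms via $u_\star=\mathcal D\varepsilon_h^\varphi$, and invoke Lemma~\ref{lemma:2.3}.

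There is one slip in the final step. After rewriting, the left-hand side equals $Q+\|\nabla u_\star\|_{\Omega_+}^2$, not $Q$ itself: the quadratic form $Q$ in Lemma~\ref{lemma:2.3} contains only $\|\nabla u_\star\|_\Omega^2$, while $\langle\mathcal W\varepsilon_h^\varphi,\varepsilon_h^\varphi\rangle_\Gamma=\|\nabla u_\star\|_{\mathbb R^d\setminus\Gamma}^2$ contributes both interior and exterior pieces. Your claim that $\|\nabla u_\star\|_\Omega^2$ alone controls $\|\varepsilon_h^\varphi\|_{H^{1/2}(\Gamma)}^2$ is not correct; what controls it is the \emph{full} energy $\|\nabla u_\star\|_{\mathbb R^d\setminus\Gamma}^2$ (this is exactly the content of Proposition~\ref{prop:A.6}, not Proposition~\ref{prop:A.1}). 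The fix is immediate: after Lemma~\ref{lemma:2.3} you have $C(\|\boldsymbol\varepsilon_h^q\|_\Omega^2+\|\nabla u_\star\|_\Omega^2+\langle\tau(\cdot),\cdot\rangle_{\partial\mathcal T_h})+\|\nabla u_\star\|_{\Omega_+}^2\le c_1+c_2+c_3$, and the two gradient pieces recombine into $\langle\mathcal W\varepsilon_h^\varphi,\varepsilon_h^\varphi\rangle_\Gamma$ (or $\omega(\varepsilon_h^\varphi,\varepsilon_h^\varphi)$), whose coercivity gives the $H^{1/2}$-bound. The paper sidesteps this bookkeeping by keeping $\omega(\varepsilon_h^\varphi,\varepsilon_h^\varphi)$ throughout rather than reducing to $\langle\mathcal W\varepsilon_h^\varphi,\varepsilon_h^\varphi\rangle_\Gamma$; this also avoids having to justify $\int_\Gamma\varepsilon_h^\varphi=0$ (which is true because $1\in Y_h$, so $P_\Gamma$ preserves the mean, but you did not say this).
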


\begin{proof} By the definition of the projections and \eqref{eq:3.2}, it follows that
\begin{subequations}\label{eq:3.3}
\begin{alignat}{4} 
(\kappa^{-1}\boldsymbol\Pi\mathbf q,\mathbf r)_{\mathcal T_h} -(\Pi u,\mathrm{div}\,\mathbf r)_{\mathcal T_h}+\langle P\,u,\mathbf r\cdot\mathbf n\rangle_{\partial\mathcal T_h} &=c_1(\mathbf r)& & \forall \mathbf r \in \boldsymbol V_h,\\
-(\boldsymbol\Pi\mathbf q,\nabla w)_{\mathcal T_h}+\langle \widehat\pi,w\rangle_{\partial \mathcal T_h} &=(f,w)_{\mathcal T_h}  &
\qquad &\forall w \in W_h,\\
-\langle\widehat\pi,\widehat v\rangle_{\partial \mathcal T_h\setminus\Gamma} & = 0 & &\forall \widehat v \in M_h,\\
\langle P u,\widehat v\rangle_\Gamma -\langle\widetilde{\mathcal K}P_\Gamma\varphi,\widehat v\rangle_\Gamma &=\langle\beta_0,\widehat v\rangle_\Gamma-c_2(\widehat v) & & \forall \widehat v\in M_h,\\
-\langle\widehat\pi,\phi\rangle_\Gamma+\omega(P_\Gamma\varphi,\phi) &=\langle\beta_1,\phi\rangle_\Gamma+c_3(\phi)& & \forall \phi \in Y_h.
\end{alignat}
\end{subequations}
Subtracting \eqref{eq:1.8} from equations \eqref{eq:3.3} we obtain the {\em error equations}:
\begin{subequations}\label{eq:3.5}
\begin{alignat}{4} 
\label{eq:3.5a}
(\kappa^{-1}\boldsymbol\varepsilon_h^q,\mathbf r)_{\mathcal T_h} -(\varepsilon_h^u,\mathrm{div}\,\mathbf r)_{\mathcal T_h}+\langle \widehat{\varepsilon_h^u},\mathbf r\cdot\mathbf n\rangle_{\partial\mathcal T_h} &=c_1(\mathbf r)& & \forall \mathbf r \in \boldsymbol V_h,\\
\label{eq:3.5b}
(\mathrm{div}\,\boldsymbol\varepsilon_h^q, w)_{\mathcal T_h}+\langle \tau(\varepsilon_h^u-\widehat{\varepsilon_h^u}),w\rangle_{\partial \mathcal T_h} &=0  &
\qquad &\forall w \in W_h,\\
\label{eq:3.5c}
-\langle\widehat{\varepsilon_h},\widehat v\rangle_{\partial \mathcal T_h\setminus\Gamma} & = 0 & &\forall \widehat v \in M_h,\\
\label{eq:3.5d}
\langle \widehat{\varepsilon_h^u},\widehat v\rangle_\Gamma -\langle\widetilde{\mathcal K}\varepsilon_h^\varphi,\widehat v\rangle_\Gamma &=-c_2(\widehat v) & & \forall \widehat v\in M_h,\\
\label{eq:3.5e}
-\langle\widehat{\varepsilon_h},\phi\rangle_\Gamma+\omega(\varepsilon_h^\varphi,\phi) &=c_3(\phi)& & \forall \phi \in Y_h.
\end{alignat}
\end{subequations}
Testing these equations with $\boldsymbol\varepsilon_h^q$, $\varepsilon_h^u$, $\widehat{\varepsilon_h^u}$, $-\widehat{\varepsilon_h}$ and $\varepsilon_h^\varphi$ respectively, adding them, and simplifying, we obtain
\begin{eqnarray}\label{eq:3.6}
(\kappa^{-1}\boldsymbol\varepsilon_h^q,\boldsymbol\varepsilon_h^q)_{\mathcal T_h}+\langle\tau(\varepsilon_h^u-\widehat{\varepsilon_h^u}),\varepsilon_h^u-\widehat{\varepsilon_h^u}\rangle_{\partial\mathcal T_h}+\langle\mathcal K \varepsilon_h^\varphi-\smallfrac12\varepsilon_h^\varphi,\widehat{\varepsilon_h}\rangle_\Gamma+\omega(\varepsilon_h^\varphi,\varepsilon_h^\varphi) & & \\
& & \hspace{-3cm}=c_1(\boldsymbol\varepsilon_h^q)+c_2(\widehat{\varepsilon_h})+c_3(\varepsilon_h^\varphi). \nonumber
\end{eqnarray}
Let now $\varepsilon_h^\star:=\mathcal D\varepsilon_h^\varphi$ and note that Proposition \ref{prop:A.6} and \eqref{eq:A.11b} imply that
\[
\omega(\varepsilon_h^\varphi,\varepsilon_h^\varphi)=\|\nabla\varepsilon_h^\star\|_{\mathbb R^d\setminus\Gamma}^2+\Big| \int_\Gamma \varepsilon_h^\varphi\Big|^2\qquad \mbox{and} \qquad \mathcal K \varepsilon_h^\varphi-\smallfrac12\varepsilon_h^\varphi=-\gamma^-\varepsilon_h^\star.
\]
Applying now the coercivity estimate of Lemma \ref{lemma:2.3}, it follows from \eqref{eq:3.6} that
\[
\|\boldsymbol\varepsilon_h^q\|_\Omega^2+\langle\tau(\varepsilon_h^u-\widehat{\varepsilon_h^u}),\varepsilon_h^u-\widehat{\varepsilon_h^u}\rangle_{\partial\mathcal T_h}+\omega(\varepsilon_h^\varphi,\varepsilon_h^\varphi)  \le C\Big(c_1(\boldsymbol\varepsilon_h^q)+c_2(\widehat{\varepsilon_h})+c_3(\varepsilon_h^\varphi)\Big). 
\]
Since the bilinear form $\omega$ is $H^{1/2}(\Gamma)$-coercive (Proposition \ref{prop:A.6}), the result follows.
\end{proof}

At this moment we have to introduce a technical hypothesis, 
\begin{equation}\label{eq:3.8}
h_\Gamma  \le C  \min  \{ h_K : K \in \mathcal T_h, \, \overline K \cap \Gamma \neq \emptyset\}, \qquad \mbox{i.e.,} \qquad \|\mathfrak h^{-1}\|_{L^\infty(\Gamma)}\le C h_\Gamma^{-1},
\end{equation}
meaning that the meshsize of $\Gamma_h$ is controlled by the minimum local meshsize of $\mathcal T_h$ on $\Gamma$.
This hypothesis is satisfied, for instance, if $\mathcal T_h$ is quasiuniform in a neighbordhood of $\Gamma$ and $\Gamma_h$ is equal the restriction of $\mathcal T_h$ to $\Gamma$, or is a refinement of this inherited mesh. We will also assume that
\begin{equation}\label{eq:3.50}
h_\Gamma \|\tau\|_{L^\infty(\Gamma)}\le C.
\end{equation}

\begin{theorem}\label{the:3.2}
Assume that \eqref{eq:2.1}, \eqref{eq:3.8} and \eqref{eq:3.50} hold. Then for $\|\mathfrak h \tau\|_{L^\infty(\partial\mathcal T_h)}$ small enough
\[
\|\boldsymbol\varepsilon_h^q\|_\Omega+\langle\tau(\varepsilon_h^u-\widehat{\varepsilon_h^u}),\varepsilon_h^u-\widehat{\varepsilon_h^u}\rangle_{\partial\mathcal T_h}^{1/2}+\|\varepsilon_h^\varphi\|_{H^{1/2}(\Gamma)}  \le C \Big(\mathrm{App}_h^q+
 \|P_\Gamma\varphi-\varphi\|_{H^{1/2}(\Gamma)}\Big),
\]
where
\[
\mathrm{App}_h^q := \|\boldsymbol\Pi\mathbf q-\mathbf q\|_\Omega+\Big(\sum_K h_K^2 \| \nabla( \mathbf P_k\mathbf q-\mathbf q)\|_K^2\Big)^{1/2}.
\]
\end{theorem}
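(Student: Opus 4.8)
The plan is to combine the energy inequality of Proposition~\ref{prop:3.1} with estimates on the three functionals $c_1$, $c_2$, $c_3$, absorbing the terms involving $\widehat{\varepsilon_h}$ and $\varepsilon_h^\varphi$ into the left-hand side. The starting point is the bound \eqref{eq:3.2b}, so I would estimate each of $c_1(\boldsymbol\varepsilon_h^q)$, $c_2(\widehat{\varepsilon_h})$, and $c_3(\varepsilon_h^\varphi)$ from above by a product of an approximation quantity and one of the three norms on the left of \eqref{eq:3.2b}, then use Young's inequality to split off the norm factor so it can be hidden. The functional $c_1$ is the easy one: by Cauchy--Schwarz and $\kappa^{-1}\in L^\infty$, $|c_1(\boldsymbol\varepsilon_h^q)|\le C\|\boldsymbol\Pi\mathbf q-\mathbf q\|_\Omega\|\boldsymbol\varepsilon_h^q\|_\Omega$, which contributes the $\|\boldsymbol\Pi\mathbf q-\mathbf q\|_\Omega$ part of $\mathrm{App}_h^q$.

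For $c_2(\widehat{\varepsilon_h})=\langle\widetilde{\mathcal K}(P_\Gamma\varphi-\varphi),\widehat{\varepsilon_h}\rangle_\Gamma$ the difficulty is that $\widehat{\varepsilon_h}$ is only an $L^2$-on-the-skeleton object, not something we control in $H^{1/2}(\Gamma)$ or even in a mesh-dependent norm directly appearing in \eqref{eq:3.2b}. Here I would use the error equation \eqref{eq:3.5c}, which says $\widehat{\varepsilon_h}$ has single-valued normal flux vanishing on interior faces; more to the point, I expect to control $\|\mathfrak h^{1/2}\widehat{\varepsilon_h}\|_\Gamma$ (or a suitably scaled $L^2(\Gamma)$ norm of $\widehat{\varepsilon_h}$) in terms of the left-hand side of \eqref{eq:3.2b} together with $\mathrm{App}_h^q$, using the definition $\widehat{\varepsilon_h}=\boldsymbol\varepsilon_h^q\cdot\mathbf n+\tau(\varepsilon_h^u-\widehat{\varepsilon_h^u})$, a discrete trace inequality for the polynomial $\boldsymbol\varepsilon_h^q$, and Lemma~\ref{lemma:3.1} for the consistency part $\widehat\pi-\mathbf q\cdot\mathbf n$. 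Then, since $\widetilde{\mathcal K}$ maps $L^2(\Gamma)$ boundedly into $L^2(\Gamma)$ (or into $H^{1/2}$, whichever is cleaner), and using hypothesis \eqref{eq:3.50} to relate $\|\tau\|_{L^\infty(\Gamma)}$-weighted norms to $h_\Gamma^{-1}$-weighted norms, I can bound $|c_2(\widehat{\varepsilon_h})|$ by $C\|P_\Gamma\varphi-\varphi\|_{H^{1/2}(\Gamma)}$ times that controlled quantity; the mesh hypothesis \eqref{eq:3.8} is what lets me pass between the boundary meshsize and the local volume meshsizes when invoking Lemma~\ref{lemma:2.2}-type trace bounds. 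The weighted $L^2$ norm of $\widehat{\varepsilon_h^u}$-type differences is handled by noting $\tau$ is bounded below where it is nonzero on each element, together with Lemma~\ref{lemma:2.1}, or more simply by the estimate already built into \eqref{eq:3.2b}.

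For $c_3(\varepsilon_h^\varphi)=-\langle\widehat\pi-\mathbf q\cdot\mathbf n,\varepsilon_h^\varphi\rangle_\Gamma+\omega(P_\Gamma\varphi-\varphi,\varepsilon_h^\varphi)$, the second term is immediate: $\omega$ is bounded on $H^{1/2}(\Gamma)$, so it is $\le C\|P_\Gamma\varphi-\varphi\|_{H^{1/2}(\Gamma)}\|\varepsilon_h^\varphi\|_{H^{1/2}(\Gamma)}$. For the first term I would use a duality/trace pairing: $|\langle\widehat\pi-\mathbf q\cdot\mathbf n,\varepsilon_h^\varphi\rangle_\Gamma|\le \|\mathfrak h^{1/2}(\widehat\pi-\mathbf q\cdot\mathbf n)\|_\Gamma\,\|\mathfrak h^{-1/2}\varepsilon_h^\varphi\|_\Gamma$, then bound $\|\mathfrak h^{-1/2}\varepsilon_h^\varphi\|_\Gamma\le C\|\varepsilon_h^\varphi\|_{H^{1/2}(\Gamma)}$ by an inverse inequality on $Y_h$ (valid since $\varepsilon_h^\varphi$ is a discrete function, using shape-regularity of $\Gamma_h$ and hypothesis \eqref{eq:3.8}), and bound $\|\mathfrak h^{1/2}(\widehat\pi-\mathbf q\cdot\mathbf n)\|_\Gamma$ by the first estimate of Lemma~\ref{lemma:3.1}, which gives exactly $C\,\mathrm{App}_h^q$. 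Collecting everything, \eqref{eq:3.2b} becomes
\[
\mathrm{LHS}^2 \le C\,\mathrm{App}_h^q\,\|\boldsymbol\varepsilon_h^q\|_\Omega + C\big(\mathrm{App}_h^q+\|P_\Gamma\varphi-\varphi\|_{H^{1/2}(\Gamma)}\big)\cdot\mathrm{LHS},
\]
where $\mathrm{LHS}$ denotes the square root of the left-hand side of \eqref{eq:3.2b}; a final Young's inequality to absorb the $\mathrm{LHS}$ factors and the standard $ab\le\varepsilon a^2+C_\varepsilon b^2$ trick yield the claimed bound. I expect the main obstacle to be the $c_2$ term: getting a clean bound on $\widehat{\varepsilon_h}$ restricted to $\Gamma$ in a norm that is actually controlled by \eqref{eq:3.2b} requires carefully threading the stabilization function $\tau$, the mesh hypotheses \eqref{eq:3.8}--\eqref{eq:3.50}, and the consistency estimates of Lemma~\ref{lemma:3.1}, and it is where the assumption that $\beta_1\in L^2(\Gamma)$ rather than merely $H^{-1/2}(\Gamma)$ quietly does its work.
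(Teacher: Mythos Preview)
Your overall strategy---start from Proposition~\ref{prop:3.1}, bound the three consistency functionals, and absorb---is exactly what the paper does, and your treatment of $c_1$ is correct. But there are two genuine gaps.

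\textbf{The bound for $c_3$.} The inequality you invoke, $\|\mathfrak h^{-1/2}\varepsilon_h^\varphi\|_\Gamma\le C\|\varepsilon_h^\varphi\|_{H^{1/2}(\Gamma)}$, is false: take $\varepsilon_h^\varphi\equiv 1$, so the left side blows up like $h_\Gamma^{-1/2}$ while the right side stays bounded. This is not an inverse inequality (inverse inequalities go the other way). The paper instead exploits the orthogonality \eqref{eq:3.2}, which says $\widehat\pi|_e$ is the $L^2(e)$ projection of $\mathbf q\cdot\mathbf n$ onto $\mathcal P_k(e)$; hence
\[
\langle\widehat\pi-\mathbf q\cdot\mathbf n,\varepsilon_h^\varphi\rangle_\Gamma=\langle\widehat\pi-\mathbf q\cdot\mathbf n,\varepsilon_h^\varphi-P\varepsilon_h^\varphi\rangle_\Gamma,
\]
and now the estimate $\|\mathfrak h^{-1/2}(\varepsilon_h^\varphi-P\varepsilon_h^\varphi)\|_\Gamma\le C\|\varepsilon_h^\varphi\|_{H^{1/2}(\Gamma)}$ \emph{does} hold (a local Poincar\'e/scaling argument, since $P$ kills edge-wise polynomials of degree $\le k$). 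This subtraction is the missing idea.

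\textbf{The bound for $c_2$.} Your sketch never explains how to compensate the $h_\Gamma^{-1/2}$ loss incurred by the discrete trace inequality $\|\boldsymbol\varepsilon_h^q\cdot\mathbf n\|_\Gamma\le Ch_\Gamma^{-1/2}\|\boldsymbol\varepsilon_h^q\|_\Omega$. If you pair at the $L^2(\Gamma)$ level and simply bound $\|P_\Gamma\varphi-\varphi\|_\Gamma\le C\|P_\Gamma\varphi-\varphi\|_{H^{1/2}(\Gamma)}$ you are left with an unabsorbable factor $h_\Gamma^{-1/2}$; if instead you weight $\widehat\varepsilon_h$ by $\mathfrak h^{1/2}$, you need $\|\mathfrak h^{-1/2}\widetilde{\mathcal K}(P_\Gamma\varphi-\varphi)\|_\Gamma$, which is not controlled. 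The paper closes this with an Aubin--Nitsche duality gain on the \emph{approximation error},
\[
\|P_\Gamma\varphi-\varphi\|_\Gamma\le C\,h_\Gamma^{1/2}\,\|P_\Gamma\varphi-\varphi\|_{H^{1/2}(\Gamma)},
\]
which exactly cancels the $h_\Gamma^{-1/2}$ from the trace inequality (and, together with \eqref{eq:3.50}, handles the $\tau$-term). Also, there is no ``consistency part $\widehat\pi-\mathbf q\cdot\mathbf n$'' in $c_2$; Lemma~\ref{lemma:3.1} is not needed there.
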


\begin{proof}
We will derive the proof from Proposition \ref{prop:3.1} and some estimates related to the functionals defined in \eqref{eq:3.2c}. 
First of all
\begin{equation}\label{eq:3.9}
|c_1(\boldsymbol\varepsilon_h^q)| \le C \|\boldsymbol\Pi \mathbf q-\mathbf q \|_\Omega\|\boldsymbol\varepsilon_h^q\|_\Omega.
\end{equation}
Using Proposition \ref{prop:A.7} and a scaling argument (note the use of \eqref{eq:3.8} in the scaling argument), it follows that
\begin{eqnarray}
|c_2(\widehat{\varepsilon_h})| &\le &  \|\widetilde{K}(P_\Gamma\varphi-\varphi)\|_\Gamma ( \|\boldsymbol\varepsilon_h^q\cdot\mathbf n\|_\Gamma ++\|\tau\|_{L^\infty(\Gamma)}^{1/2}\langle\tau(\varepsilon_h^u-\widehat{\varepsilon_h^u}),\varepsilon_h^u-\widehat{\varepsilon_h^u}\rangle_\Gamma^{1/2})\nonumber
\\
\label{eq:3.10}
&\le & C \|P_\Gamma\varphi-\varphi\|_\Gamma \Big( h_\Gamma^{-1/2} \|\boldsymbol\varepsilon_h^q\|_\Omega +\|\tau\|_{L^\infty(\Gamma)}^{1/2}\langle\tau(\varepsilon_h^u-\widehat{\varepsilon_h^u}),\varepsilon_h^u-\widehat{\varepsilon_h^u}\rangle_\Gamma^{1/2}\Big).
\end{eqnarray}
An Aubin-Nitsche duality argument and properties of best approximation by piecewise polynomial functions shows that
\begin{equation}\label{eq:3.11}
\| P_\Gamma\varphi-\varphi\|_\Gamma \le C h_\Gamma^{1/2} \|P_\Gamma\varphi-\varphi\|_{H^{1/2}(\Gamma)}.
\end{equation}
For the third functional, we use Proposition \ref{prop:A.6} and \eqref{eq:3.2}
\begin{equation}\label{eq:3.12}
| c_3(\varepsilon_h^\varphi)|  \le  |\langle \widehat\pi-\mathbf q\cdot\mathbf n,\varepsilon_h^\varphi-P\varepsilon_h^\varphi\rangle_\Gamma | + C\|P_\Gamma\varphi-\varphi\|_{H^{1/2}(\Gamma)}\|\varepsilon_h^\varphi\|_{H^{1/2}(\Gamma)},
\end{equation}
where it is to be understood that the local projection $P\varepsilon_h^\varphi$ is only applied (and needed) on $\Gamma$. A local scaling argument
(cf  \cite[Theorem 3.2]{GaSa:2006}) shows that 
\begin{equation}\label{eq:3.13}
\| \mathfrak h^{-1/2} (\varepsilon_h^\varphi - P\varepsilon_h^\varphi)\|_\Gamma \le C \|\varepsilon_h^\varphi\|_{H^{1/2}(\Gamma)}.
\end{equation}
This inequality and Lemma \ref{lemma:3.1} can then be used in \eqref{eq:3.12} to prove that
\begin{equation}\label{eq:3.31}
| c_3(\varepsilon_h^\varphi)| \le C \Big( \mathrm{App}_h^q+\|P_\Gamma\varphi-\varphi\|_{H^{1/2}(\Gamma)}\Big) \|\varepsilon_h^\varphi\|_{H^{1/2}(\Gamma)}
\end{equation}
The result is then a consequence of Proposition \ref{prop:3.1} and the bounds \eqref{eq:3.9}, \eqref{eq:3.10}, \eqref{eq:3.11}, and \eqref{eq:3.31}. 
\end{proof}

\begin{corollary}\label{cor:3.4}
In the hypotheses of Theorem \ref{the:3.2},
\[
\|\mathfrak h^{1/2} (\mathbf q\cdot\mathbf n-\widehat{\mathbf q}_h\cdot\mathbf n)\|_{\partial \mathcal T_h} \le C  (\mathrm{App}_h^q+\|P_\Gamma\varphi-\varphi\|_{H^{1/2}(\Gamma)}).
\]
\end{corollary}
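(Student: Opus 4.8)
The plan is to bound the normal-flux error $\mathbf q\cdot\mathbf n-\widehat{\mathbf q}_h\cdot\mathbf n$ by splitting it through the projected quantity $\widehat\pi$ introduced just before Lemma \ref{lemma:3.1}. Writing
\[
\mathbf q\cdot\mathbf n-\widehat{\mathbf q}_h\cdot\mathbf n=(\mathbf q\cdot\mathbf n-\widehat\pi)+\widehat\varepsilon_h,
\qquad\text{where } \widehat\varepsilon_h=\widehat\pi-\widehat{\mathbf q}_h\cdot\mathbf n=\boldsymbol\varepsilon_h^q\cdot\mathbf n+\tau(\varepsilon_h^u-\widehat{\varepsilon_h^u}),
\]
it suffices, after multiplying by $\mathfrak h^{1/2}$ and taking the $L^2(\partial\mathcal T_h)$ norm, to estimate the two terms separately and invoke Theorem \ref{the:3.2}.

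For the first term, $\|\mathfrak h^{1/2}(\widehat\pi-\mathbf q\cdot\mathbf n)\|_{\partial\mathcal T_h}^2=\sum_K\sum_{e\in\mathcal E(K)} h_e\|\widehat\pi-\mathbf q\cdot\mathbf n\|_e^2$, and the first inequality of Lemma \ref{lemma:3.1} bounds each summand by $C(\|\mathbf q-\boldsymbol\Pi\mathbf q\|_K^2+h_K^2\|\nabla(\mathbf P_k\mathbf q-\mathbf q)\|_K^2)$; summing over $K$ and using shape-regularity (so that each $K$ contributes a bounded number of faces) gives exactly $C\,(\mathrm{App}_h^q)^2$.

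For the second term, I would use a discrete trace/inverse inequality on each face to pass from $\mathfrak h^{1/2}$-weighted face norms of the polynomial $\widehat\varepsilon_h$ back to volumetric norms. Specifically, $\|\mathfrak h^{1/2}\boldsymbol\varepsilon_h^q\cdot\mathbf n\|_{\partial\mathcal T_h}\le C\|\boldsymbol\varepsilon_h^q\|_\Omega$ by the standard scaled trace inequality for polynomials, and $\|\mathfrak h^{1/2}\tau(\varepsilon_h^u-\widehat{\varepsilon_h^u})\|_{\partial\mathcal T_h}\le C\|\mathfrak h\tau\|_{L^\infty(\partial\mathcal T_h)}^{1/2}\langle\tau(\varepsilon_h^u-\widehat{\varepsilon_h^u}),\varepsilon_h^u-\widehat{\varepsilon_h^u}\rangle_{\partial\mathcal T_h}^{1/2}$, since $h_e\tau^2=(h_e\tau)\cdot\tau\le\|\mathfrak h\tau\|_{L^\infty(\partial\mathcal T_h)}\tau$ pointwise. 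Under the smallness hypothesis on $\|\mathfrak h\tau\|_{L^\infty(\partial\mathcal T_h)}$ already in force, both are controlled by the left-hand side of Theorem \ref{the:3.2}, hence by $C(\mathrm{App}_h^q+\|P_\Gamma\varphi-\varphi\|_{H^{1/2}(\Gamma)})$.

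I do not expect a genuine obstacle here; the only point requiring a little care is that the weight $\mathfrak h$ is the piecewise-constant function $h_e$ on faces, while the trace inequality naturally produces $h_K$ — these are comparable by shape-regularity (as already noted in the paper when it writes $h_K\|\tau\|_{L^\infty(\partial K)}\le C\|\mathfrak h\tau\|_{L^\infty(\partial\mathcal T_h)}$), so the substitution is harmless. Assembling the three estimates and applying the triangle inequality yields the claim.
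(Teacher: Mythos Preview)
Your proposal is correct and follows essentially the same approach as the paper: split through $\widehat\pi$, use Lemma~\ref{lemma:3.1} for the approximation part, a discrete trace inequality for $\boldsymbol\varepsilon_h^q\cdot\mathbf n$, the pointwise bound $h_e\tau^2\le\|\mathfrak h\tau\|_{L^\infty(\partial\mathcal T_h)}\tau$ for the stabilization part, and conclude with Theorem~\ref{the:3.2}. Your remark about the $h_e$ versus $h_K$ comparability is exactly the content of the ``scaling argument'' the paper invokes.
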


\begin{proof}
Adding and subtracting $\widehat\pi$, we can bound
\begin{eqnarray*}
\|\mathfrak h^{1/2} (\mathbf q\cdot\mathbf n-\widehat{\mathbf q}_h\cdot\mathbf n)\|_{\partial \mathcal T_h} &\le & \|\mathfrak h^{1/2}\widehat{\varepsilon_h}\|_{\partial\mathcal T_h}+\|\mathfrak h^{1/2}(\widehat\pi-\mathbf q\cdot\mathbf n)\|_{\partial\mathcal T_h}\\
& & \hspace{-3cm}\le \|\mathfrak h^{1/2}\boldsymbol\varepsilon_h^q\cdot\mathbf n\|_{\partial\mathcal T_h}+\|\mathfrak h\tau\|_{L^\infty(\partial\mathcal T_h)}\langle\tau(\varepsilon_h^u-\widehat{\varepsilon_h^u},\varepsilon_h^u-\widehat{\varepsilon_h^u}\rangle_{\partial\mathcal T_h}+\|\mathfrak h^{1/2}(\widehat\pi-\mathbf q\cdot\mathbf n)\|_{\partial\mathcal T_h}.
\end{eqnarray*}
A scaling argument (using the fact that $\boldsymbol\varepsilon_h^q\in \boldsymbol V_h$), plus Lemma \ref{lemma:3.1} and Theorem \ref{the:3.2} finish the proof.
\end{proof}

Let $\Gamma_1,\ldots,\Gamma_L$ be the faces (with $d=3$) or edges (when $d=2$) of $\Gamma$. We consider the space $X^m(\Gamma):=\prod_{\ell=1}^L H^m(\Gamma)$, endowed with its product norm.

\begin{corollary}\label{cor:3.5}
Assume that the hypotheses of Theorem \ref{the:3.2} hold and that the exact solution of \eqref{eq:1.6} satisfies: $\mathbf q\in H^{k+1}(\Omega)^d$, $u\in H^{k+1}(\Omega)$, $\varphi\in X^{k+2}(\Gamma)$. Then 
\begin{eqnarray*}
& & \hspace{-1cm} \|\mathbf q-\mathbf q_h\|_\Omega+ \|\varphi-\varphi_h\|_{H^{1/2}(\Gamma)} \\
& & \le C h^{k+1} (|\mathbf q|_{H^{k+1}(\Omega)}+\delta_\tau |u|_{H^{k+1}(\Omega)}) + C h_\Gamma^{k+3/2}\|\varphi\|_{X^{k+2}(\Gamma)},
\end{eqnarray*}
where $\delta_\tau:=\max_K \|\tau\|_{L^\infty(\partial K\setminus e_K)}$ and $e_K\in \mathcal E(K)$ is such that $\tau_{e_K}=\|\tau\|_{L^\infty(\partial K)}$.
\end{corollary}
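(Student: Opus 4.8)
The plan is to specialize Theorem \ref{the:3.2} and Corollary \ref{cor:3.4} to the regularity assumptions $\mathbf q\in H^{k+1}(\Omega)^d$, $u\in H^{k+1}(\Omega)$, $\varphi\in X^{k+2}(\Gamma)$, and to convert the projection-error quantities appearing there into the stated powers of $h$ and $h_\Gamma$. First I would bound the three pieces on the right-hand side of the estimate in Theorem \ref{the:3.2}. For $\|\boldsymbol\Pi\mathbf q-\mathbf q\|_\Omega$ I would invoke the standard approximation properties of the HDG projection from \cite{CoGoSa:2010} (recalled implicitly via \eqref{eq:3.1}): under the stated regularity one has $\|\boldsymbol\Pi\mathbf q-\mathbf q\|_\Omega\le C h^{k+1}(|\mathbf q|_{H^{k+1}(\Omega)}+\delta_\tau|u|_{H^{k+1}(\Omega)})$, where the appearance of $\delta_\tau$ and the selected face $e_K$ with $\tau_{e_K}=\|\tau\|_{L^\infty(\partial K)}$ is exactly the subtlety in the HDG projection estimates: the face with the largest $\tau$ is "free" and does not contribute the $\tau$-dependent term. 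For the second term of $\mathrm{App}_h^q$, namely $\big(\sum_K h_K^2\|\nabla(\mathbf P_k\mathbf q-\mathbf q)\|_K^2\big)^{1/2}$, I would use that $\mathbf P_k$ is the $L^2$-best approximation in $\boldsymbol V_h$, so $\|\nabla(\mathbf P_k\mathbf q-\mathbf q)\|_K\le C h_K^{k}|\mathbf q|_{H^{k+1}(K)}$ by Bramble--Hilbert on the reference element plus scaling; multiplying by $h_K$ and summing gives the bound $C h^{k+1}|\mathbf q|_{H^{k+1}(\Omega)}$.

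Next I would handle the boundary term $\|P_\Gamma\varphi-\varphi\|_{H^{1/2}(\Gamma)}$. Since $Y_h$ consists of continuous piecewise polynomials of degree $k+1$ on $\Gamma_h$ and $\varphi\in X^{k+2}(\Gamma)$, the best-approximation error in $L^2(\Gamma)$ is $O(h_\Gamma^{k+2})$; combining this with an inverse-type estimate between the $L^2$ and $H^{1/2}$ norms on the discrete space (or, more cleanly, interpolating the $L^2$ estimate $O(h_\Gamma^{k+2})$ with the $H^1$ estimate $O(h_\Gamma^{k+1})$) yields $\|P_\Gamma\varphi-\varphi\|_{H^{1/2}(\Gamma)}\le C h_\Gamma^{k+3/2}\|\varphi\|_{X^{k+2}(\Gamma)}$. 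Here I should be mindful that $P_\Gamma$ is the $H^{1/2}(\Gamma)$-orthogonal projection, so the $H^{1/2}$ bound is immediate from quasi-optimality and the $L^2$-bound is the one requiring the Aubin--Nitsche argument (as in \eqref{eq:3.11}); using $X^{k+2}$-regularity accommodates the corners/edges of $\Gamma$.

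Finally I would assemble the pieces. Plugging the two displayed bounds into Theorem \ref{the:3.2} controls $\|\boldsymbol\varepsilon_h^q\|_\Omega+\|\varepsilon_h^\varphi\|_{H^{1/2}(\Gamma)}$ by the claimed right-hand side. To pass from these projection-based errors to the true errors $\|\mathbf q-\mathbf q_h\|_\Omega$ and $\|\varphi-\varphi_h\|_{H^{1/2}(\Gamma)}$, I would use the triangle inequality, $\|\mathbf q-\mathbf q_h\|_\Omega\le\|\mathbf q-\boldsymbol\Pi\mathbf q\|_\Omega+\|\boldsymbol\varepsilon_h^q\|_\Omega$ and $\|\varphi-\varphi_h\|_{H^{1/2}(\Gamma)}\le\|\varphi-P_\Gamma\varphi\|_{H^{1/2}(\Gamma)}+\|\varepsilon_h^\varphi\|_{H^{1/2}(\Gamma)}$, noting that the added projection errors are already of the required orders. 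I expect the main obstacle to be the first step: keeping track of the exact dependence of the HDG-projection approximation estimate on $\tau$, i.e. justifying the constant $\delta_\tau=\max_K\|\tau\|_{L^\infty(\partial K\setminus e_K)}$ and the distinguished face $e_K$. This is a known but delicate point from \cite{CoGoSa:2010}; everything else is routine approximation theory and bookkeeping, with the mesh hypotheses \eqref{eq:3.8} and \eqref{eq:3.50} ensuring the scaling arguments from Theorem \ref{the:3.2} remain valid.
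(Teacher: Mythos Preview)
Your proposal is correct and follows essentially the same route as the paper, which simply cites Theorem~\ref{the:3.2}, standard best-approximation estimates for piecewise polynomials, and \cite[Theorem~2.1]{CoGoSa:2010} for the HDG projection bound. You spell out the details (Bramble--Hilbert for the $\mathbf P_k$ term, interpolation for the $H^{1/2}$ boundary estimate, triangle inequality to pass to true errors) that the paper leaves implicit; note only that Corollary~\ref{cor:3.4} is not actually needed here, and you do not in fact use it.
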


\begin{proof}
This bound is a direct consequence of Theorem \ref{the:3.2}, using well-known estimates for the best approximation by piecewise polynomials and \cite[Theorem 2.1]{CoGoSa:2010}.
\end{proof}

\section{Estimates by duality arguments}\label{sec:4}

Consider the problem
\begin{subequations}
\begin{alignat}{4}
\kappa^{-1}\mathbf d+\nabla\Theta = 0 & \qquad & & \mbox{in $\Omega$},\\
 \mathrm{div}\,\mathbf d=\varepsilon_h^u  & & & \mbox{in $\Omega$},\\
\label{eq:4.1b}
\Delta \omega =0  & & & \mbox{in $\Omega_+$},\\
\label{eq:4.1bb}
\omega = c_\infty \Phi(r)+\mathcal O (r^{-d+1}) & & & \mbox{at infinity},\\
\label{eq:4.1c}
\Theta=\omega & & & \mbox{on $\Gamma$ },\\
\label{eq:4.1cc}
\mathbf d\cdot\mathbf n=-\partial_{\mathbf n}\omega & & &  \mbox{on $\Gamma$}.
\end{alignat}
\end{subequations}
By Proposition \ref{prop:A.22} and \eqref{eq:4.1b}-\eqref{eq:4.1cc}, we can represent $\omega=\mathcal D \Theta+\mathcal S (\mathbf d\cdot\mathbf n)$ in $\Omega_+$. Therefore, using the definition of the operator $\mathcal W$ --see \eqref{eq:A.11}-- and Proposition \ref{prop:A.7}, we can write
\begin{equation}\label{eq:4.2}
-\langle \mathbf d\cdot\mathbf n,\smallfrac12\psi+\mathcal K \psi\rangle_\Gamma +\langle\mathcal W \psi,\Theta\rangle_\Gamma =0 \quad \forall \psi\in H^{1/2}(\Gamma).
\end{equation}
We will assume the following regularity estimate
\begin{equation}\label{eq:4.3}
\|\Theta\|_{H^2(\Omega)}+\|\mathbf d\|_{H^1(\Omega)} \le C_{\mathrm{reg}} \| \varepsilon_h^u\|_\Omega.
\end{equation}
Since this is a transmission problem, this hypothesis is mainly related to the regularity of the diffusion coefficient $\kappa$. If $\kappa$ is smooth and equal to one in a neighborhood of $\Gamma$, then \eqref{eq:4.3} holds (see \cite[Proposition 3.4]{CoGuSa:TA} for a similar argument).

\begin{proposition}\label{prop:4.1}
In the hypotheses of Theorem \ref{the:3.2} and assuming the regularity estimate \eqref{eq:4.3}, it holds
\begin{equation}\label{eq:4.100}
\|\varepsilon_h^u\|_\Omega \le C h^{\min\{k,1\}} \Big(\mathrm{App}_h^q+\|P_\Gamma\varphi-\varphi\|_{H^{1/2}(\Gamma)}\Big).
\end{equation}
\end{proposition}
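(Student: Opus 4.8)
The plan is to run a standard Aubin--Nitsche duality argument, testing the adjoint problem \eqref{eq:4.1b}--\eqref{eq:4.1cc} against the error. First I would write $\|\varepsilon_h^u\|_\Omega^2 = (\mathrm{div}\,\mathbf d,\varepsilon_h^u)_{\mathcal T_h}$ and integrate by parts elementwise, bringing in $\mathbf d\cdot\mathbf n$ on $\partial\mathcal T_h$ and $\nabla\Theta = -\kappa^{-1}\mathbf d$ in the volume. The key trick is that the HDG projection $(\boldsymbol\Pi\mathbf d,\Pi\Theta)$ of the dual solution is (up to the consistency functionals $c_1,c_2,c_3$) Galerkin-orthogonal to the error fields, exactly as in the projection-based HDG analysis of \cite{CoGoSa:2010}. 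So I would insert $\boldsymbol\Pi\mathbf d$, $\Pi\Theta$, $P(\gamma^-\Theta)$, $P_\Gamma(\text{density of }\omega)$ as test functions in the error equations \eqref{eq:3.5}, and simultaneously test the continuous identities satisfied by the dual problem (in particular \eqref{eq:4.2}) with $\varepsilon_h^\varphi$, so that the BEM cross-terms on $\Gamma$ cancel against each other. After this bookkeeping, $\|\varepsilon_h^u\|_\Omega^2$ should reduce to a sum of three types of terms: (i) consistency terms for the dual problem, paired with the primal errors $(\boldsymbol\varepsilon_h^q,\varepsilon_h^u-\widehat{\varepsilon_h^u},\widehat\varepsilon_h)$; (ii) the primal consistency functionals $c_1(\boldsymbol\varepsilon_h^d),c_2,c_3$ evaluated on the dual projection errors; and (iii) a term like $(\kappa^{-1}(\boldsymbol\Pi\mathbf d-\mathbf d),\boldsymbol\varepsilon_h^q)_{\mathcal T_h}$.

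Next I would bound each piece. The dual consistency terms are of the form $c_1^{\mathrm{dual}}(\boldsymbol\varepsilon_h^q) = (\kappa^{-1}(\boldsymbol\Pi\mathbf d-\mathbf d),\boldsymbol\varepsilon_h^q)_{\mathcal T_h}$, $c_2^{\mathrm{dual}}$ and $c_3^{\mathrm{dual}}$, and by the HDG projection approximation estimates (e.g.\ \cite[Theorem 2.1]{CoGoSa:2010}) together with the $H^2\times H^1$ regularity \eqref{eq:4.3}, one has $\|\boldsymbol\Pi\mathbf d-\mathbf d\|_\Omega + h\,\|\nabla(\mathbf P_k\mathbf d-\mathbf d)\| \lesssim h\|\mathbf d\|_{H^1(\Omega)} \lesssim h\,C_{\mathrm{reg}}\|\varepsilon_h^u\|_\Omega$, and similarly the flux-trace terms controlled through Lemma \ref{lemma:3.1} pick up an extra $h^{1/2}$ from the $\mathfrak h^{1/2}$-weighting, matched by the $\mathfrak h^{-1/2}$-weight one extracts from $\boldsymbol\varepsilon_h^q\cdot\mathbf n$ on $\partial\mathcal T_h$ via a scaling inequality. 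For the boundary functionals $c_2^{\mathrm{dual}},c_3^{\mathrm{dual}}$ and the genuinely primal $c_2,c_3$ terms, I would use \eqref{eq:3.50}, hypothesis \eqref{eq:3.8}, Proposition \ref{prop:A.7}, and inequality \eqref{eq:3.13}, exactly as in the proof of Theorem \ref{the:3.2}, to gain the boundary mesh-size factor. Collecting, every term carries a factor $h^{\min\{k,1\}}$ (the $k$ vs.\ $1$ dichotomy comes from whether $\mathcal P_{k-1}$ is nontrivial, i.e.\ whether the volume moment conditions \eqref{eq:3.1a}--\eqref{eq:3.1b} are active, which for $k=0$ forces the weaker $h^1$ scaling coming from Lemma \ref{lemma:3.1}'s second estimate) times $\big(\|\boldsymbol\varepsilon_h^q\|_\Omega + \langle\tau(\varepsilon_h^u-\widehat{\varepsilon_h^u}),\cdot\rangle^{1/2} + \|\varepsilon_h^\varphi\|_{H^{1/2}(\Gamma)}\big)$ times $\|\varepsilon_h^u\|_\Omega$.

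Finally I would invoke Theorem \ref{the:3.2} to replace the parenthesized energy-error quantity by $C(\mathrm{App}_h^q + \|P_\Gamma\varphi-\varphi\|_{H^{1/2}(\Gamma)})$, divide through by $\|\varepsilon_h^u\|_\Omega$, and obtain \eqref{eq:4.100}. The main obstacle I anticipate is the careful cancellation of the boundary integral terms coupling the dual BEM density and $\varepsilon_h^\varphi$: one must check that testing the dual identity \eqref{eq:4.2} with $\varepsilon_h^\varphi$ and the primal error equation \eqref{eq:3.5e} with the appropriate projected dual density produces terms that exactly annihilate, modulo the $\widetilde{\mathcal K}$ and $\mathcal W$ consistency errors that are then absorbed into $c_2^{\mathrm{dual}},c_3^{\mathrm{dual}}$ — this is the place where the non-symmetric coupling makes the algebra more delicate than in the symmetric case, and where one has to be precise about which trace (interior vs.\ exterior, $\gamma^-$ vs.\ $\gamma^+$) appears. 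A secondary subtlety is making sure the scaling arguments on $\Gamma$ respecting \eqref{eq:3.8} do not lose the extra half power of $h_\Gamma$ that is needed for the stated rate when $h_\Gamma$ and $h$ are comparable.
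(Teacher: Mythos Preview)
Your overall strategy—Aubin--Nitsche duality, testing the error equations \eqref{eq:3.5} with the HDG projection $(\boldsymbol\Pi\mathbf d,\Pi\Theta,P\Theta)$ of the dual solution, and then invoking Theorem \ref{the:3.2}—is exactly what the paper does, and the interior part of your argument (the manipulation leading to the analogue of $C_h^{(1)}$ and its estimate via \eqref{eq:3.1a}--\eqref{eq:3.1b} and the regularity bound \eqref{eq:4.3}) is correct.

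The gap is in the boundary cancellation, precisely where you anticipated difficulty. There is no ``dual BEM density'' to project into $Y_h$: the dual exterior field $\omega$ is represented as $\mathcal D\Theta+\mathcal S(\mathbf d\cdot\mathbf n)$, not as a pure double layer, so the object on $\Gamma$ that must enter the argument is the trace $\Theta|_\Gamma$ itself. Testing \eqref{eq:4.2} with $\varepsilon_h^\varphi$ and \eqref{eq:3.5e} with $P_\Gamma\Theta$ does not produce a clean cancellation, because the term $\langle\mathbf d\cdot\mathbf n,\smallfrac12\varepsilon_h^\varphi+\mathcal K\varepsilon_h^\varphi\rangle_\Gamma$ that \eqref{eq:4.2} generates has no counterpart in the error equations. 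The paper instead tests \eqref{eq:3.5d} with the dual numerical flux $\widehat\pi_d:=\boldsymbol\Pi\mathbf d\cdot\mathbf n+\tau(\Pi\Theta-P\Theta)$, applies \eqref{eq:4.2} with $\psi=\varphi-\varphi_h$ (not $\varepsilon_h^\varphi$), and then uses the rewritten form of \eqref{eq:3.5e} with a Cl\'ement interpolant $\Theta_h\in Y_h$ of $\Theta$. The Cl\'ement interpolant is the key technical ingredient you are missing: it is what allows the $H^{1/2}(\Gamma)$ approximation estimate $\|\Theta-\Theta_h\|_{H^{1/2}(\Gamma)}\lesssim h|\Theta|_{H^2(\Omega)}$ and the weighted $L^2$ bound $\|\mathfrak h^{-1/2}(\Theta-\Theta_h)\|_\Gamma\lesssim h|\Theta|_{H^2(\Omega)}$ to be applied simultaneously, which $P_\Gamma\Theta$ alone does not deliver.

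A minor point: your remark that ``for $k=0$ forces the weaker $h^1$ scaling'' is backwards. For $k=0$ one has $\min\{k,1\}=0$ and no extra power of $h$ is gained; the factor $h^{\min\{k,1\}}$ appears because for $k\ge 1$ one can subtract $\mathbf P_{k-1}\nabla\Theta$ in the volume term (via \eqref{eq:3.1a}) and gain $\|\nabla\Theta-\mathbf P_{k-1}\nabla\Theta\|\lesssim h|\Theta|_{H^2}$, whereas for $k=0$ this subtraction is vacuous.
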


\begin{proof}
The gist of the proof consists of testing the error equations \eqref{eq:3.5} with $\boldsymbol\Pi\mathbf d$, $-\Pi\Theta$, $-P\Theta$ , $-(\boldsymbol\Pi\mathbf d+\tau(\Pi\Theta-P\Theta))$, and $-P_\Gamma\Theta$ respectively, add them and manipulate the result.
Let us first test the first error equation \eqref{eq:3.5a} with $\mathbf r=\boldsymbol\Pi\mathbf d$:
\[
(\kappa^{-1}(\mathbf q-\mathbf q_h),\boldsymbol\Pi\mathbf d)_{\mathcal T_h}-(\varepsilon_h^u,\mathrm{div}\,\boldsymbol\Pi\mathbf d)_{\mathcal T_h}+\langle \widehat{\varepsilon_h^u},\boldsymbol\Pi\mathbf d\cdot\mathbf n\rangle_{\partial \mathcal T_h}=0.
\]
Using integration by parts twice, the first condition in the definition of the HDG projection \eqref{eq:3.1a} and the fact that $\mathrm{div}\,\mathbf d=\varepsilon_h^u$, we can write
\[
(\mathrm{div}\,\boldsymbol\Pi\mathbf d,\varepsilon_h^u)_{\mathcal T_h}=-(\mathbf d,\nabla \varepsilon_h^u)_{\mathcal T_h}+\langle \varepsilon_h^u,\boldsymbol\Pi\mathbf d\cdot\mathbf n\rangle_{\partial\mathcal T_h}=\|\varepsilon_h^u\|_\Omega^2+\langle\varepsilon_h^u,(\boldsymbol\Pi\mathbf d-\mathbf d)\cdot\mathbf n\rangle_{\partial\mathcal T_h}.
\]
Adding these two equations, it follows that
\begin{eqnarray}\nonumber
\|\varepsilon_h^u\|_\Omega^2 &=& (\kappa^{-1}(\mathbf q-\mathbf q_h),\boldsymbol\Pi\mathbf d)_{\mathcal T_h}-\langle\varepsilon_h^u-\widehat{\varepsilon_h^u},(\boldsymbol\Pi\mathbf d-\mathbf d)\cdot\mathbf n\rangle_{\partial\mathcal T_h}+\langle \widehat{\varepsilon_h^u},\mathbf d\cdot\mathbf n\rangle_{\partial\mathcal T_h}\\
&=& (\kappa^{-1}(\mathbf q-\mathbf q_h),\boldsymbol\Pi\mathbf d)_{\mathcal T_h}+
\langle\tau(\varepsilon_h^u-\widehat{\varepsilon_h^u}),\Pi\Theta-P\Theta\rangle_{\partial\mathcal T_h}+\langle \widehat{\varepsilon_h^u},\mathbf d\cdot\mathbf n\rangle_\Gamma,
\label{eq:4.4}
\end{eqnarray}
where in the last equation we have applied \eqref{eq:3.1c} and the fact that $\widehat{\varepsilon_h^u}$ and $\mathbf d$ are single valued on interelement faces. We next test \eqref{eq:3.5b} with $w=\Pi\Theta$ and apply \eqref{eq:3.1b} and the fact that $\nabla\Theta=-\kappa^{-1}\mathbf d$:
\begin{eqnarray*}
0 &=& (\mathrm{div}\,\boldsymbol\varepsilon_h^q,\Theta)_{\mathcal T_h}+\langle\tau(\varepsilon_h^u-\widehat{\varepsilon_h^u}),\Pi\Theta\rangle_{\partial\mathcal T_h}\\
&=& (\kappa^{-1}\boldsymbol\varepsilon_h^q,\mathbf d)_{\mathcal T_h}+\langle \boldsymbol\varepsilon_h^q\cdot\mathbf n,P\Theta\rangle_{\partial\mathcal T_h}+
\langle\tau(\varepsilon_h^u-\widehat{\varepsilon_h^u}),\Pi\Theta\rangle_{\partial\mathcal T_h}.
\end{eqnarray*}
The following step consists of subtracting this equation from \eqref{eq:4.4} to obtain
\begin{equation}\label{eq:4.5}
\|\varepsilon_h^u\|_\Omega^2 = \underbrace{(\kappa^{-1}(\mathbf q-\mathbf q_h),\boldsymbol\Pi\mathbf d)_{\mathcal T_h}- (\kappa^{-1}\boldsymbol\varepsilon_h^q,\mathbf d)_{\mathcal T_h}}_{C_h^{(1)}}+\underbrace{\langle \widehat{\varepsilon_h^u},\mathbf d\cdot\mathbf n\rangle_\Gamma-\langle \boldsymbol\varepsilon_h^q\cdot\mathbf n+\tau(\varepsilon_h^u-\widehat{\varepsilon_h^u}),P\Theta\rangle_{\partial\mathcal T_h}}_{C_h^{(2)}}.
\end{equation}
{\em Bound for $C_h^{(1)}$.} Since $\kappa^{-1}\mathbf d=-\nabla\Theta$,
\begin{eqnarray*}
C_h^{(1)}&=& (\kappa^{-1}(\mathbf q-\mathbf q_h),\boldsymbol\Pi\mathbf d-\mathbf d)_{\mathcal T_h}+ (\boldsymbol\Pi\mathbf q-\mathbf q,-\kappa^{-1}\mathbf d)_{\mathcal T_h}\\
&=& (\kappa^{-1}(\mathbf q-\mathbf q_h),\boldsymbol\Pi\mathbf d-\mathbf d)_{\mathcal T_h}+(\boldsymbol\Pi\mathbf q-\mathbf q,
\nabla \Theta-\mathbf P_{k-1}\nabla\Theta)_{\mathcal T_h},
\end{eqnarray*}
where $\mathbf P_{k-1}$ is the local orthogonal projection on the spaces $\boldsymbol{\mathcal P}_{k-1}(K)$ if $k\ge 1$ and $\mathbf P_{-1}=0$. Note that the inclusion of $\mathbf P_{k-1}\nabla\Theta$ is possible because of the definition of the HDG projection \eqref{eq:3.1}. Therefore
\begin{eqnarray}
\nonumber
|C_h^{(1)}|  &\le& C h \|\mathbf q-\mathbf q_h\|_\Omega ( |\mathbf d |_{H^1(\Omega)}+ |\Theta|_{H^2(\Omega)})+ C h^{\min\{k,1\}} \|\boldsymbol\Pi\mathbf q-\mathbf q\|_\Omega |\Theta|_{H^2(\Omega)}\\
\label{eq:4.6}
& \le & C h^{\min\{k,1\}} \|\varepsilon_h^u\|_\Omega\Big( \|\mathbf q-\mathbf q_h\|_\Omega+ \|\boldsymbol\Pi\mathbf q-\mathbf q\|_\Omega\Big),
\end{eqnarray}
by regularity \eqref{eq:4.3} and \cite[Theorem 2.2]{CoGoSa:2010}. 

{\em Bound for $C_h^{(2)}$.} Let now $\Theta_h$ be the Cl\'ement approximation \cite{Clement:1975} on a $\mathbb P_1$ conforming finite element space on a triangulation $\widetilde{\mathcal T_h}$ whose restriction to the boundary is $\Gamma_h$. We then use \eqref{eq:3.1c} to write
\begin{equation}\label{eq:4.7}
C_h^{(2)} = \langle \widehat{\varepsilon_h^u},\widehat\pi_d\rangle_\Gamma-\langle\mathbf q\cdot\mathbf n -\widehat{\mathbf q}_h\cdot\mathbf n,\Theta_h\rangle_\Gamma+\langle\mathbf q\cdot\mathbf n -\widehat{\mathbf q}_h\cdot\mathbf n,\Theta_h-P\Theta\rangle_\Gamma,
\end{equation}
where $\widehat\pi_d:=\boldsymbol\Pi\mathbf d\cdot\mathbf n+\tau(\Pi\Theta-P\Theta)$. Testing the error equation \eqref{eq:3.5d}  with $\widehat\pi_d$,  and  using\eqref{eq:4.2}, it follows that
\begin{eqnarray}
\nonumber
\langle \widehat{\varepsilon_h^u},\widehat\pi_d\rangle_\Gamma & = &\langle (\smallfrac12 \mathcal I+\mathcal K)(\varphi-\varphi_h),\widehat\pi_d\rangle_\Gamma \\
\nonumber
& = &\langle (\smallfrac12\mathcal I+\mathcal K)(\varphi-\varphi_h),\widehat\pi_d-\mathbf d\cdot\mathbf n\rangle_\Gamma+\langle\mathcal W(\varphi-\varphi_h),\Theta\rangle_\Gamma\\
\label{eq:4.80}
&=& \langle (\smallfrac12\mathcal I+\mathcal K)(\varphi-\varphi_h),\widehat\pi_d-\mathbf d\cdot\mathbf n\rangle_\Gamma+\langle\mathcal W(\varphi-\varphi_h),\Theta-\Theta_h\rangle_\Gamma\\
& & +\langle\mathbf q\cdot\mathbf n -\widehat{\mathbf q}_h\cdot\mathbf n,\Theta_h\rangle_\Gamma ,
\nonumber
\end{eqnarray}
where we have used that
\[
\langle \mathbf q\cdot\mathbf n-\widehat{\mathbf q}_h\cdot\mathbf n,\phi\rangle_\Gamma+\langle\mathcal W(\varphi-\varphi_h),\phi\rangle_\Gamma=\omega(\varphi-\varphi_h,\phi)\quad\forall\phi\in Y_h,
\]
which is just \eqref{eq:3.5e} after using Lemmas \ref{lemma:1.1} and \ref{lemma:1.2}.
Inserting \eqref{eq:4.80}  in \eqref{eq:4.7} and applying Propositions \ref{prop:A.6} and \ref{prop:A.7}  we can bound
\begin{eqnarray}
\nonumber
|C_h^{(2)}| & \le & C \|\varphi-\varphi_h\|_{H^{1/2}(\Gamma)} (\|\widehat\pi_d-\mathbf d\cdot\mathbf n\|_{H^{-1/2}(\Gamma)}+\|\Theta-\Theta_h\|_{H^{1/2}(\Gamma)})\\
\label{eq:4.10}
& & + \|\mathfrak h^{1/2}(\mathbf q\cdot\mathbf n-\widehat{\mathbf q}_h\cdot\mathbf n)\|_\Gamma ( \| \mathfrak h^{-1/2} (P\Theta-\Theta)\|_\Gamma+\| \mathfrak h^{-1/2} (\Theta-\Theta_h)\|_\Gamma).
\end{eqnarray}
We now just need to bound all the terms in the right hand side of \eqref{eq:4.10}. A duality argument (Aubin-Nitsche trick), Lemma \ref{lemma:3.1}, and the regularity assumption \eqref{eq:4.3} show that
\begin{eqnarray}\nonumber
\|\widehat\pi_d-\mathbf d\cdot\mathbf n\|_{H^{-1/2}(\Gamma)} & =& \| P(\mathbf d\cdot\mathbf n)-\mathbf d\cdot\mathbf n\|_{H^{-1/2}(\Gamma)} \le C h^{1/2}_\Gamma \|\widehat\pi_d-\mathbf d\cdot\mathbf n\|_\Gamma \\ 
\label{eq:4.11}
& \le &C h\|\nabla\mathbf d\|_\Omega \le C h\|\varepsilon_h^u\|_\Omega.
\end{eqnarray}
Well-known properties of the Cl\'ement interpolant (see in particular \cite[Proposition 5.2]{CoGuSa:TA}) and \eqref{eq:4.3} prove also that
\begin{equation}\label{eq:4.12}
\|\Theta-\Theta_h\|_{H^{1/2}(\Gamma)}+\|\mathfrak h^{-1/2}(\Theta-\Theta_h)\|_\Gamma \le C h |\Theta|_{H^2(\Omega)}\le C h\|\varepsilon_h^u\|_\Omega.
\end{equation}
Finally, we just overestimate
\begin{equation}\label{eq:4.13}
 \| \mathfrak h^{-1/2} (P\Theta-\Theta)\|_\Gamma \le  \| \mathfrak h^{-1/2} (P_k\Theta-\Theta)\|_\Gamma \le C h^{\min\{1,k\}} |\Theta|_{H^2(\Omega)}\le 
 C h^{\min\{1,k\}} \|\varepsilon_h^u\|_\Omega,
\end{equation}
after applying a discrete trace inquality and \eqref{eq:4.3}. Bringing the bounds \eqref{eq:4.11}, \eqref{eq:4.12} and \eqref{eq:4.13} to \eqref{eq:4.10} and using Corollary \ref{cor:3.4}, we obtain the bound
\begin{equation}\label{eq:4.14}
|C_h^{(2)}|\le C h^{\min\{1,k\}} \|\varepsilon_h^u\|_\Omega (\mathrm{App}_h^q+ \|\varphi-\varphi_h\|_{H^{1/2}(\Gamma)}+\|P_\Gamma\varphi-\varphi_h\|_{H^{1/2}(\Gamma)})
\end{equation}
The bound \eqref{eq:4.100} is now a direct consequence of \eqref{eq:4.5}, \eqref{eq:4.6}, \eqref{eq:4.14} and Theorem \ref{the:3.2}. 
\end{proof}

Note that in absence of any kind of additional regularity hypothesis, the estimate of Proposition \ref{prop:4.1} can be easily repeated without the additional $h^{\min\{k,1\}}$ that provides superconvergence to the method. Also, as a consequence of Proposition \ref{prop:4.1}, the local postprocessing technique of \cite[Section 5]{CoGoSa:2010} can be applied here providing a $\mathcal O(h^{k+2})$ approximation of $u$.

\begin{corollary}\label{cor:5.2}
In the hypotheses of Proposition \ref{prop:4.1}, for $k\ge 1$,
\begin{equation}\label{eq:4.101}
\|\widehat\varepsilon_h^u\|_h:=\Big( \sum_{K\in \mathcal T_h} h_K \| \widehat\varepsilon_h^u\|_{\partial K}^2 \Big)^{1/2} \le C h \Big(\mathrm{App}_h^q+\|P_\Gamma\varphi-\varphi\|_{H^{1/2}(\Gamma)}\Big).
\end{equation}
\end{corollary}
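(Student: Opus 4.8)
The plan is to split, face by face, $\widehat\varepsilon_h^u=\varepsilon_h^u-(\varepsilon_h^u-\widehat\varepsilon_h^u)$ on $\partial K$, so that
\[
\|\widehat\varepsilon_h^u\|_h \le \Big(\sum_{K}h_K\|\varepsilon_h^u\|_{\partial K}^2\Big)^{1/2} + \Big(\sum_{K}h_K\|\varepsilon_h^u-\widehat\varepsilon_h^u\|_{\partial K}^2\Big)^{1/2},
\]
and to bound the two terms separately. The first term is easy: a discrete trace inequality on each $K$ gives $h_K\|\varepsilon_h^u\|_{\partial K}^2\le C\|\varepsilon_h^u\|_K^2$, so this term is $\le C\|\varepsilon_h^u\|_\Omega$, which by Proposition \ref{prop:4.1} is bounded by $C h^{\min\{k,1\}}(\mathrm{App}_h^q+\|P_\Gamma\varphi-\varphi\|_{H^{1/2}(\Gamma)})$; the hypothesis $k\ge 1$ is exactly what turns $h^{\min\{k,1\}}$ into the advertised $h$.

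The heart of the matter is the second term. I would localize the first error equation \eqref{eq:3.5a} to a single element $K$ (legitimate since $\boldsymbol V_h$ has no interelement continuity) and integrate by parts, arriving at
\[
\langle \varepsilon_h^u-\widehat\varepsilon_h^u,\mathbf r\cdot\mathbf n\rangle_{\partial K} = (\kappa^{-1}\boldsymbol\varepsilon_h^q+\nabla\varepsilon_h^u,\mathbf r)_K - (\kappa^{-1}(\boldsymbol\Pi\mathbf q-\mathbf q),\mathbf r)_K \qquad\forall\,\mathbf r\in\boldsymbol{\mathcal P}_k(K).
\]
Since $(\varepsilon_h^u-\widehat\varepsilon_h^u)|_{\partial K}$ lies in $\mathcal R_k(\partial K)$ and the normal-trace map $\boldsymbol{\mathcal P}_k(K)\to\mathcal R_k(\partial K)$, $\mathbf r\mapsto\mathbf r\cdot\mathbf n|_{\partial K}$, is onto (on a simplex any $d$ of the $d+1$ outward unit normals are linearly independent, so the face-wise normal components can be prescribed independently), there is $\mathbf r_K\in\boldsymbol{\mathcal P}_k(K)$ with $\mathbf r_K\cdot\mathbf n=\varepsilon_h^u-\widehat\varepsilon_h^u$ on $\partial K$; taking the minimal $L^2(K)$-norm lifting and using a scaling argument (via a Piola pull-back to the reference element, which preserves $\boldsymbol{\mathcal P}_k$ and normal traces up to face-wise constants, together with shape regularity) one may assume $\|\mathbf r_K\|_K\le C h_K^{1/2}\|\varepsilon_h^u-\widehat\varepsilon_h^u\|_{\partial K}$. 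Testing the identity above with $\mathbf r=\mathbf r_K$, cancelling one factor of $\|\varepsilon_h^u-\widehat\varepsilon_h^u\|_{\partial K}$, applying the inverse inequality $\|\nabla\varepsilon_h^u\|_K\le Ch_K^{-1}\|\varepsilon_h^u\|_K$ and squaring, I obtain
\[
h_K\|\varepsilon_h^u-\widehat\varepsilon_h^u\|_{\partial K}^2 \le C\big(h_K^2\|\boldsymbol\varepsilon_h^q\|_K^2 + \|\varepsilon_h^u\|_K^2 + h_K^2\|\boldsymbol\Pi\mathbf q-\mathbf q\|_K^2\big).
\]

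It then remains to sum over $K$ and use the estimates already established: $\|\boldsymbol\varepsilon_h^q\|_\Omega\le C(\mathrm{App}_h^q+\|P_\Gamma\varphi-\varphi\|_{H^{1/2}(\Gamma)})$ from Theorem \ref{the:3.2}, the trivial bound $\|\boldsymbol\Pi\mathbf q-\mathbf q\|_\Omega\le\mathrm{App}_h^q$, and $\|\varepsilon_h^u\|_\Omega\le Ch(\mathrm{App}_h^q+\|P_\Gamma\varphi-\varphi\|_{H^{1/2}(\Gamma)})$ from Proposition \ref{prop:4.1} (again using $k\ge 1$). Combining this with the bound on the first term yields \eqref{eq:4.101}. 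The one non-routine step I expect to be the main obstacle is the construction of the lifting $\mathbf r_K$ with the correct $h_K^{1/2}$ scaling (surjectivity of the normal-trace map onto $\mathcal R_k(\partial K)$ and uniformity of the minimal-norm right inverse under shape regularity); everything else is a standard combination of trace and inverse inequalities with the energy and duality estimates already proved.
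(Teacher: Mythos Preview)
Your proof is correct and is precisely the standard HDG argument that the paper invokes (it simply cites \cite[Theorem~4.1]{CoGoSa:2010} verbatim). The lifting step you flag as the main obstacle is exactly the unisolvence of the BDM$_k$ element for $k\ge 1$: the normal-trace map $\boldsymbol{\mathcal P}_k(K)\to\mathcal R_k(\partial K)$ is onto precisely when $k\ge 1$ (it fails for $k=0$ by a dimension count), and your parenthetical about linearly independent normals is not quite a proof of this, but the surjectivity and the $h_K^{1/2}$ scaling of the minimal-norm right inverse are standard facts from mixed finite element theory.
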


\begin{proof}
The argument to derive \eqref{eq:4.101} from \eqref{eq:4.100}  can be taken verbatim from the proof of \cite[Theorem 4.1]{CoGoSa:2010}.\end{proof}

\begin{proposition}\label{prop:5.3}
In the hypotheses of Theorem \ref{the:3.2}
\[
\|\varphi-\varphi_h\|_\Gamma \le C h^{1/2} \Big( \mathrm{App}_h^q+\|P_\Gamma\varphi-\varphi\|_{H^{1/2}(\Gamma)}\Big)+ \|\widehat\varepsilon_h^u\|_\Gamma.
\]
\end{proposition}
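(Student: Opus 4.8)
The plan is to split $\varphi-\varphi_h=(\varphi-P_\Gamma\varphi)+\varepsilon_h^\varphi$ and treat the two pieces separately. The first one is pure approximation: by the Aubin--Nitsche estimate \eqref{eq:3.11} together with \eqref{eq:3.8} (which gives $h_\Gamma\le Ch$) one has $\|\varphi-P_\Gamma\varphi\|_\Gamma\le C h^{1/2}\|P_\Gamma\varphi-\varphi\|_{H^{1/2}(\Gamma)}$, which is already of the required form. So the whole issue is to bound $\|\varepsilon_h^\varphi\|_\Gamma$, and for this I would run a duality argument in the spirit of the proof of Proposition \ref{prop:4.1}, the novelty being that the dual source now lives on $\Gamma$.

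The dual problem I would use is obtained from \eqref{eq:4.1} by switching off the interior source and placing $\varepsilon_h^\varphi$ on the right-hand side of the flux transmission condition: find $\mathbf d,\Theta$ with $\kappa^{-1}\mathbf d+\nabla\Theta=0$, $\mathrm{div}\,\mathbf d=0$ in $\Omega$, $\omega$ harmonic in $\Omega_+$ with the appropriate decay, $\Theta=\omega$ and $\mathbf d\cdot\mathbf n+\partial_{\mathbf n}\omega=\varepsilon_h^\varphi$ on $\Gamma$. Representing $\omega=\mathcal D\Theta+\mathcal S(\mathbf d\cdot\mathbf n-\varepsilon_h^\varphi)$ via Proposition \ref{prop:A.22} yields a boundary identity of the same type as \eqref{eq:4.2} but carrying an extra term $\langle\varepsilon_h^\varphi,\psi\rangle_\Gamma$. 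The decisive structural point is that $\varepsilon_h^\varphi\in L^2(\Gamma)$ is one order smoother than the natural datum space $H^{-1/2}(\Gamma)$, so the exterior Neumann problem gains a derivative: $\Theta|_\Gamma=\omega|_\Gamma\in H^1(\Gamma)$ with $\|\Theta\|_{H^1(\Gamma)}\le C\|\varepsilon_h^\varphi\|_\Gamma$, and (under the smoothness of $\kappa$ near $\Gamma$ already implicit in \eqref{eq:4.3}) also $\|\Theta\|_{H^2(\Omega)}+\|\mathbf d\|_{H^1(\Omega)}\le C\|\varepsilon_h^\varphi\|_\Gamma$.

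I would then test the error equations \eqref{eq:3.5a}--\eqref{eq:3.5e} with $\boldsymbol\Pi\mathbf d$, $-\Pi\Theta$, $-P\Theta$, $-(\boldsymbol\Pi\mathbf d\cdot\mathbf n+\tau(\Pi\Theta-P\Theta))$ and $-P_\Gamma\Theta$ respectively, add, integrate by parts, and use the defining relations of the HDG projection \eqref{eq:3.1} together with the dual boundary identity. After cancellation the left-hand side collapses to $\|\varepsilon_h^\varphi\|_\Gamma^2$, and the right-hand side consists of (i) the consistency functionals $c_1,c_2,c_3$ of \eqref{eq:3.2c} evaluated at the dual projections and (ii) a single surviving boundary term of the form $\langle\widehat{\varepsilon_h^u},\mathbf d\cdot\mathbf n\rangle_\Gamma$. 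Part (i) is bounded exactly as in Theorem \ref{the:3.2} and Proposition \ref{prop:4.1}: the $\widehat\pi-\mathbf q\cdot\mathbf n$ contribution to $c_3$ via Lemma \ref{lemma:3.1} and the $L^2(e)$-orthogonality \eqref{eq:3.2} (subtracting a coarse piecewise polynomial from $P_\Gamma\Theta$, at a cost $h_\Gamma^{1/2}\|\Theta\|_{H^1(\Gamma)}$), the $\omega(P_\Gamma\varphi-\varphi,\punto)$ contribution via Propositions \ref{prop:A.6}--\ref{prop:A.7} and \eqref{eq:3.11}, and $c_1,c_2$ via the HDG projection estimates, the $L^2(\Gamma)$-boundedness of $\widetilde{\mathcal K}$ and Corollary \ref{cor:3.4}; using Theorem \ref{the:3.2} to absorb the error quantities and $h_\Gamma\le Ch$, each term of (i) is at most $C h^{1/2}\big(\mathrm{App}_h^q+\|P_\Gamma\varphi-\varphi\|_{H^{1/2}(\Gamma)}\big)\|\varepsilon_h^\varphi\|_\Gamma$. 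For (ii) one simply estimates $|\langle\widehat{\varepsilon_h^u},\mathbf d\cdot\mathbf n\rangle_\Gamma|\le\|\widehat{\varepsilon_h^u}\|_\Gamma\|\mathbf d\cdot\mathbf n\|_\Gamma\le C\|\widehat{\varepsilon_h^u}\|_\Gamma\|\varepsilon_h^\varphi\|_\Gamma$, which is precisely the last term of the statement. Dividing by $\|\varepsilon_h^\varphi\|_\Gamma$ and combining with the first step finishes the proof.

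The main obstacle is the bookkeeping in the ``add and simplify'' step: the dual problem, and in particular the transmission datum and the sign conventions, must be arranged so that all the interior energy terms and all the double-layer cross-terms on $\Gamma$ cancel in pairs, leaving only the harmless contribution $\langle\widehat{\varepsilon_h^u},\mathbf d\cdot\mathbf n\rangle_\Gamma$ — the same mechanism as in Proposition \ref{prop:4.1}, one step more intricate because the source now sits on the boundary rather than in $\Omega$. A secondary but essential point is to justify the gain of one derivative for the dual exterior problem, which is a regularity-free fact on a Lipschitz boundary with $L^2(\Gamma)$ datum; without it the argument would only reproduce the non-superconvergent estimate.
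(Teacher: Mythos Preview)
Your approach is substantially more elaborate than the paper's and, as written, has a genuine gap.

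The paper's proof is a four-line argument that never leaves the boundary and needs no dual transmission problem, no interior regularity, and no HDG projections. It uses only the single error equation \eqref{eq:3.5d}, which (after unpacking $c_2$) reads
\[
\langle \widetilde{\mathcal K}(\varphi-\varphi_h),\widehat v\rangle_\Gamma=\langle \widehat\varepsilon_h^u,\widehat v\rangle_\Gamma\qquad\forall\,\widehat v\in M_h,
\]
together with the $L^2(\Gamma)$-invertibility of $\widetilde{\mathcal K}=\tfrac12\mathcal I+\mathcal K$ modulo constants (Proposition~\ref{prop:A.7}, estimate~\eqref{eq:A.70}). Since $\int_\Gamma(\varphi-\varphi_h)=0$ by Lemmas~\ref{lemma:1.1}--\ref{lemma:1.2}, \eqref{eq:A.70} gives $\|\varphi-\varphi_h\|_\Gamma$ bounded by the sup over $\phi\in L^2(\Gamma)$ of $\langle\widetilde{\mathcal K}(\varphi-\varphi_h),\phi\rangle_\Gamma/\|\phi\|_\Gamma$. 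Splitting $\phi=(\phi-P\phi)+P\phi$, the second piece gives $\|\widehat\varepsilon_h^u\|_\Gamma$ directly, while the first is bounded by $\|\varphi-\varphi_h\|_{H^{1/2}(\Gamma)}\|\phi-P\phi\|_{H^{-1/2}(\Gamma)}\le Ch^{1/2}\|\varphi-\varphi_h\|_{H^{1/2}(\Gamma)}\|\phi\|_\Gamma$ via Aubin--Nitsche and Theorem~\ref{the:3.2}. No interior dual problem is needed at all.

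Your proposed dual problem does not produce what you claim. With $\mathbf d\cdot\mathbf n+\partial_{\mathbf n}\omega=\varepsilon_h^\varphi$ and $\Theta=\gamma\omega$, the representation $\omega=\mathcal D\Theta+\mathcal S(\mathbf d\cdot\mathbf n-\varepsilon_h^\varphi)$ together with Proposition~\ref{prop:A.7} yields the analogue of \eqref{eq:4.2} with an extra term $\langle\varepsilon_h^\varphi,\tfrac12\psi+\mathcal K\psi\rangle_\Gamma$, \emph{not} $\langle\varepsilon_h^\varphi,\psi\rangle_\Gamma$. Consequently, retracing the manipulations of Proposition~\ref{prop:4.1} with $\psi=\varphi-\varphi_h$ leaves you with $\langle\varepsilon_h^\varphi,\widetilde{\mathcal K}(\varphi-\varphi_h)\rangle_\Gamma$, not $\|\varepsilon_h^\varphi\|_\Gamma^2$. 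To convert that into a bound on $\|\varepsilon_h^\varphi\|_\Gamma$ you would still have to invoke the $L^2$-invertibility of $\widetilde{\mathcal K}$, i.e.\ exactly \eqref{eq:A.70}, which is the whole content of the paper's direct argument.

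There is also a hypothesis mismatch: the proposition is stated \emph{in the hypotheses of Theorem~\ref{the:3.2}}, which do not include the dual regularity \eqref{eq:4.3} or smoothness of $\kappa$ near $\Gamma$. Your route, even if repaired, would need $\|\Theta\|_{H^2(\Omega)}+\|\mathbf d\|_{H^1(\Omega)}\le C\|\varepsilon_h^\varphi\|_\Gamma$, i.e.\ an assumption strictly stronger than what the statement allows. The paper's proof avoids this entirely by working with \eqref{eq:3.5d} and \eqref{eq:A.70} on $\Gamma$.
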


\begin{proof}
The error equation \eqref{eq:3.5d} is equivalent to writing $\langle \widetilde{\mathcal K}(\varphi-\varphi_h),\widehat v\rangle_\Gamma=\langle \widehat\varepsilon_h^u,\widehat v\rangle_\Gamma$ for all $\widehat v\in M_h$. Therefore, by Lemmas \ref{lemma:1.1} and \ref{lemma:1.2} and Proposition \ref{prop:A.7}, we can bound
\begin{eqnarray*}
\|\varphi-\varphi_h\|_\Gamma &\le & C  \sup_{0\neq \phi\in H^0(\Gamma)} \frac{\langle \widetilde{\mathcal K}(\varphi-\varphi_h),\phi\rangle_\Gamma}{\|\phi\|_\Gamma}\\
& \le & C  \sup_{0\neq \phi\in H^0(\Gamma)}\frac{\langle \widetilde{\mathcal K}(\varphi-\varphi_h),\phi-P\phi\rangle_\Gamma}{\|\phi\|_\Gamma}+ C  \sup_{0\neq \phi\in H^0(\Gamma)}
\frac{\langle \widehat\varepsilon_h^u,P\phi\rangle_\Gamma}{\|\phi\|_\Gamma}\\
&\le & C \|\varphi-\varphi_h\|_{H^{1/2}(\Gamma)}\sup_{0\neq \phi\in H^0(\Gamma)}\frac{\|\phi-P\phi\|_{H^{-1/2}(\Gamma)}}{\|\phi\|_\Gamma} +\|\widehat\varepsilon_h^u\|_\Gamma.
\end{eqnarray*}
An Aubin-Nitsche duality argument shows then that $\|\phi-P\phi\|_{H^{-1/2}(\Gamma)}\le h^{1/2}\|\phi\|_\Gamma$. The proof then follows by Theorem \ref{the:3.2}.
\end{proof}

\begin{corollary}
In the hypotheses of Proposition \ref{prop:4.1} and assuming that $h\le C h_\Gamma$, for $k\ge 1$,
\[
\|\varphi-\varphi_h\|_\Gamma \le C h^{1/2}  \Big( \mathrm{App}_h^q+\|P_\Gamma\varphi-\varphi\|_{H^{1/2}(\Gamma)}\Big).
\]
\end{corollary}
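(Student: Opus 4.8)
The plan is to read the statement as a combination of Proposition~\ref{prop:5.3} and Corollary~\ref{cor:5.2}. Proposition~\ref{prop:5.3} already gives
\[
\|\varphi-\varphi_h\|_\Gamma \le C h^{1/2}\Big(\mathrm{App}_h^q+\|P_\Gamma\varphi-\varphi\|_{H^{1/2}(\Gamma)}\Big)+\|\widehat\varepsilon_h^u\|_\Gamma,
\]
so the only remaining task is to estimate the boundary term $\|\widehat\varepsilon_h^u\|_\Gamma$ by the right-hand side, and for this Corollary~\ref{cor:5.2} (which requires $k\ge1$) supplies the mesh-weighted bound $\|\widehat\varepsilon_h^u\|_h\le C h(\mathrm{App}_h^q+\|P_\Gamma\varphi-\varphi\|_{H^{1/2}(\Gamma)})$. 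The gap between the two is a routine localization/inverse estimate.

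First I would pass from $\|\widehat\varepsilon_h^u\|_\Gamma$ to $\|\widehat\varepsilon_h^u\|_h$. Since, up to shape regularity, $\|\mathfrak h^{1/2}\widehat\varepsilon_h^u\|_\Gamma^2$ is a partial sum of $\|\widehat\varepsilon_h^u\|_h^2$, one obtains
\[
\|\widehat\varepsilon_h^u\|_\Gamma^2\le \|\mathfrak h^{-1}\|_{L^\infty(\Gamma)}\,\|\mathfrak h^{1/2}\widehat\varepsilon_h^u\|_\Gamma^2\le C\,h_\Gamma^{-1}\,\|\widehat\varepsilon_h^u\|_h^2,
\]
where the last inequality uses hypothesis \eqref{eq:3.8}. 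Combining this with Corollary~\ref{cor:5.2} gives
\[
\|\widehat\varepsilon_h^u\|_\Gamma\le C\,h_\Gamma^{-1/2}\,h\,\Big(\mathrm{App}_h^q+\|P_\Gamma\varphi-\varphi\|_{H^{1/2}(\Gamma)}\Big),
\]
and then the new hypothesis $h\le C h_\Gamma$, i.e. $h_\Gamma^{-1/2}\le C h^{-1/2}$, converts the factor $h_\Gamma^{-1/2}h$ into $C h^{1/2}$. Inserting this back into the estimate from Proposition~\ref{prop:5.3} yields the claim.

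Essentially nothing here is a serious obstacle: the two cited results do the real work (Proposition~\ref{prop:5.3} reduces the $L^2(\Gamma)$-error of $\varphi-\varphi_h$ to the trace term $\|\widehat\varepsilon_h^u\|_\Gamma$, and Corollary~\ref{cor:5.2} carries the extra power of $h$ coming from the superconvergence of $\widehat\varepsilon_h^u$ proved in Proposition~\ref{prop:4.1}). The one point requiring a little care is the localization step, where switching from the global mesh-weighted norm $\|\cdot\|_h$ to the plain $L^2(\Gamma)$-norm loses a factor comparable to $h_\Gamma^{-1/2}$ near $\Gamma$; hypothesis \eqref{eq:3.8} guarantees the loss is no worse than that, and the added assumption $h\le C h_\Gamma$ is exactly what is needed to absorb it against the $O(h)$ from Corollary~\ref{cor:5.2}, leaving the $O(h^{1/2})$ rate.
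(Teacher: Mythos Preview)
Your proof is correct and follows exactly the paper's approach: the paper's own proof simply cites Proposition~\ref{prop:5.3}, Corollary~\ref{cor:5.2}, the inequality $\|\widehat\varepsilon_h^u\|_\Gamma \le \|\mathfrak h^{-1}\|_{L^\infty(\Gamma)}^{1/2}\|\widehat\varepsilon_h^u\|_h$, and hypothesis~\eqref{eq:3.8}, which is precisely what you spelled out. The only cosmetic point is that the localization step does not even require shape regularity, since $h_e\le h_K$ whenever $e\subset\partial K$.
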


\begin{proof}
It is a direct consequence of Proposition \ref{prop:5.3}, using Corollary \ref{cor:5.2}, the estimate $\|\widehat\varepsilon_h^u\|_\Gamma \le \|\mathfrak h^{-1}\|_{L^\infty(\Gamma)}^{1/2}\|\widehat\varepsilon_h^u\|_h$, and \eqref{eq:3.8}.
\end{proof}

\section{Some additional considerations}\label{sec:F}

\paragraph{On the potential representation.} The main drawback of a boundary integral formulation based on a potential ansatz on a polyhedral boundary is the expected lack of regularity of the associated density. This makes that the hypotheses for regularity of the solution in Corollary \ref{cor:3.5} might not be realistic. Note, however, that the numerical experiments shown in Section \ref{sec:5}, for which the density is unknown, show that the computation of the interior unknowns and of the exterior solution are not affected for the foreseeable lack of regularity of the density in the corners of the domain. On the other hand, the analysis allows for using a very refined grid $\Gamma_h$ near the corners. From the point of view of implementation, the case when $\Gamma_h$ is a refinement of the grid $\mathcal T_h$ restricted to $\Gamma$, avoids many of the complications of dealing with general non-matching grids.

\paragraph{Direct Boundary Integral formulation.} A possible remedy for the above problem is the use of a direct formulation. For formulations of mixed type (and the one leading to the HDG method is one of such), this has been explained in great detail in \cite{MeSaSe:2011}. The coupled formulation \eqref{eq:1.6} has to be modified with the following arguments. First of all, we represent the exterior solution by
\[
u_+=\mathcal D \psi+\mathcal S(\mathbf q\cdot\mathbf n+\beta_1) \qquad \psi \in H^{1/2}_0(\Gamma),
\]
where $\mathcal S$ is the single layer potential \eqref{eq:A.101}.
Second, we consider the coupled problem
\begin{subequations}\label{eq:5.1}
\begin{alignat}{4}
\label{eq:5.1a}
\kappa^{-1}\mathbf q +\nabla u^\circ & = 0 & & \mbox{ in $\Omega$},\\
\label{eq:5.1b}
\mathrm{div}\,\mathbf q &=f & & \mbox{ in $\Omega$},\\
\label{eq:5.1c}
u^\circ -\psi &=\beta_0 & & \mbox{ on $\Gamma$},\\
\label{eq:5.1d}
-\langle\mathbf q\cdot\mathbf n,\smallfrac12\phi+\mathcal K\phi\rangle_\Gamma + \omega(\varphi,\phi)&=\langle\beta_1,\smallfrac12\phi+\mathcal K\phi\rangle_\Gamma &\qquad & \forall \phi\in H^{1/2}(\Gamma).
\end{alignat}
\end{subequations}
Finally, the interior field $u^\circ$ is corrected by adding a constant, $u=u^\circ+c$, where
\[
 c:=\smallfrac1{|\Gamma|} \left(\langle 1,\smallfrac12\psi+\mathcal K\psi\rangle_\Gamma + \langle \mathbf q\cdot\mathbf n+\beta_1,\eta\rangle_\Gamma \right) \qquad \eta:=\int_\Gamma \Phi(|\cdot-\mathbf y|)\mathrm d\Gamma(\mathbf y),
\]
and $\Phi$ is the fundamental solution for the Laplacian \eqref{eq:A.100}. This gives the solution to \eqref{eq:1.1}. Without the correction only $\mathbf q$ and $u_+$ are correctly determined. If desired, it is possible to write $\psi+c=\gamma u_+$ and use this as a way of obtaining an approximation of the exterior trace. Equations \eqref{eq:1.8} can be easily modified to handle this reformulation. From the point of view of implementation, this requires a simple rearrangement of the matrices in \eqref{eq:1.8}. The fact that the integral operators appear also in the right hand side, in a term of the form $\langle \beta_1,\smallfrac12\phi+\mathcal K\phi\rangle_\Gamma$, with $\phi\in Y_h,$
can be easily handled by preprojecting the data function $\beta_1$ in the space $M_h$ restricted to the boundary $\Gamma$. Note that the HDG-BEM discretization of \eqref{eq:5.1} leads to a system whose matrix is the transpose of the one in Section \ref{sec:1}. Therefore, the quadratic form is the same, and all the energy arguments can be applied. The part of the analysis related to duality arguments requires some additional work though.

\paragraph{Non-symmetric RT-BEM.} A recent article \cite{CoGuSa:TA} studies the symmetric coupling of HDG and BEM (using two integral equations) in parallel to the coupling of Raviart-Thomas mixed elements with BEM. The latter had appeared in the literature long ago \cite{Meddahi:1996, CaFu:2000}. Among other things, \cite{CoGuSa:TA} provided an analysis of supercovergence in $L^2$ for the approximation of the variable $u$, and discussed the algorithmic advantages of hybridizing the RT method when coupled with BEM. Non-symmetric coupling of mixed elements with BEM has been proposed and studied in \cite{MeSaSe:2011}. The analysis of this paper can be modified to include the methods proposed in \cite{MeSaSe:2011}, thus providing some improved estimates that the variational techniques in that paper did not show.

\paragraph{On the diffusion parameter $\kappa$.} The final point for discussion is Hypothesis \eqref{eq:2.1}. First of all, let us mention that this inequality might not be sharp. We explore this in Section \ref{sec:5}. However, there is evidence that some hypothesis like this is consubstantial to the mixed formulation that we are using for the HDG-BEM coupling. A similar result, with a lower bound for the minimum value of $\kappa$ --i.e., an upper bound for $\|\kappa^{-1}\|_{L^\infty}$-- had been noticed in the context of FEM-BEM formulations: see \cite{Steinbach:2011} and \cite[Theorem 5.1]{GaHsSa:2012}. It has been recently proved \cite[Lemma 3.2]{OfSt:SB} that there is actually a threshold for the minimum value of $\kappa$ under which the formulation used in non-symmetric FEM-BEM loses its well-posedness. Nevertheless, if the threshold is crossed in points at a certain distance of the coupling boundary, a compactness argument can be invoked to show that the formulation and its Galerkin discretizations (for sufficiently refined grids) are well posed. While global compactness arguments (where lower order terms are added and subtracted to the equation) do not seem to be applicable for the current format of HDG analysis, it is possible that some equivalent ideas could be used to prove that the methods of this paper can be used by placing the boundary $\Gamma$ sufficiently far from the places where $\kappa$ is much larger than the exterior (unit) diffusivity.

\section{Experiments}\label{sec:5}

\paragraph{Convergence and superconvergence.} For this example the domain is the square $(0,1)\times (0,1)$. The boundary mesh $\Gamma_h$ is taken to be the restriction of $\mathcal T_h$ to $\Gamma$. The coarsest grid contains only two triangles. Other grids are obtained by uniform refinement, reaching up to 2048 elements and 3136 edges, of which 128 are boundary edges. Note that for a polynomial degree $k$, the dimension of the system that is solved (with $\widehat u_h$ and $\varphi_h$ as unknowns) is $k+1$ times the number of edges plus the number of boundary edges. We take the diffusion parameter $\kappa(x,y)=1+x^2$ and $u(x,y)=\exp( x+y)$ as exact solution in $\Omega$. The exterior solution is taken to be
\[
u^+(\mathbf x):= -\smallfrac1{2\pi}\log\frac{|\mathbf x-\mathbf x_1|}{|\mathbf x-\mathbf x_2|} \qquad \mathbf x_1=(0.3, 0.4), \quad \mathbf x_2=(0.7, 0.6).
\]
The density $\varphi$ is not known. To check errors we approximate the exterior solution at the observation point $\mathbf x_{\mathrm{obs}}=(-0.1,0.1)$. Computation of the exterior solution is done using high order quadrature on the formula for the double layer potential. We tabulate and plot the following errors:
\[
e_h^{\mathbf q}:=\frac{\|\mathbf q-\mathbf q_h\|_\Omega}{\|\mathbf q\|_\Omega}, \qquad e_h^{\hat u}:=\frac{\|u-\widehat u_h\|_h}{\| u\|_h},\qquad  e_h^+:=\frac{|u_+(\mathbf x_{\mathrm{obs}})- u_{+,h}( \mathbf x_{\mathrm{obs}})|}{|u_+( \mathbf x_{\mathrm{obs}})|}
\]
\[
\varepsilon_h^u :=\frac{\|\Pi u-u_h\|_\Omega}{\|u\|_\Omega}, \qquad  \varepsilon_h^{\hat u}:=\frac{\| Pu -u_h\|_h}{\| u\|_h}.
\]
The results are reported in Tables \ref{table:1} to \ref{table:3} for polynomials degree $k=0$ to $k=2$ respectively. Estimated convergence errors are computed using consecutive grids.
 
\begin{table}[ht] \small
  \centering \begin{tabular}{ c c c c c c c c c c} 
  \hline\hline 
   $e_h^{\mathbf q}$& ecr &$e_h^{\widehat u} $& ecr & $\varepsilon_h^u$ & ecr & $e_h^+$& ecr& $\varepsilon_h^{\widehat u}$ &ecr\\
    \hline
3.9102(-1)	&	-	&	2.5573(-1)	&	-	&	3.1210(-2)	&	-	&	6.3929(0)	&	-	&	4.7747(-2)	&	-	\\
2.2878(-1)	&	0.77	&	1.1908(-1)	&	1.10	&	1.4877(-2)	&	1.07	&	5.1969(-1)	&	3.62	&	2.6686(-2)	&	0.84	\\
1.1867(-1)	&	0.95	&	5.5497(-2)	&	1.10	&	6.2955(-3)	&	1.24	&	3.5613(-1)	&	0.55	&	8.4902(-3)	&	1.65	\\
6.0426(-2)	&	0.97	&	2.6790(-2)	&	1.05	&	3.5409(-3)	&	0.83	&	1.7215(-1)	&	1.05	&	3.8430(-3)	&	1.14	\\
3.0435(-2)	&	0.99	&	1.3157(-2)	&	1.03	&	1.8998(-3)	&	0.90	&	8.6852(-2)	&	0.99	&	1.9418(-3)	&	0.98	\\
1.5265(-2)	&	1.00	&	6.5192(-3)	&	1.01	&	9.8424(-4)	&	0.95	&	4.3598(-2)	&	0.99	&	9.9074(-4)	&	0.97	\\
   [1ex] \hline \end{tabular} 
\caption{Experiments for the lowest order ($k=0$) method. All errors behave like $\mathcal O(h)$.}\label{table:1}
\end{table}

\begin{table}[ht] \small
  \centering \begin{tabular}{ c c c c c c c c c c} 
  \hline\hline 
   $e_h^{\mathbf q}$& ecr &$e_h^{\widehat u} $&ecr & $\varepsilon_h^u$ & ecr & $e_h^+$&ecr & $\varepsilon_h^{\widehat u}$ &ecr \\
    \hline
9.1892(-2)	&	-	&	3.2673(-2)	&	-	&	4.8524(-3)	&	-	&	9.3891(-2)	&	-	&	9.9900(-3)	&	-	\\
2.5806(-2)	&	1.83	&	7.6059(-3)	&	2.10	&	7.6764(-4)	&	2.66	&	2.0486(-2)	&	2.20	&	1.6756(-3)	&	2.58	\\
6.7443(-3)	&	1.94	&	1.7824(-3)	&	2.09	&	1.1487(-4)	&	2.74	&	1.2999(-3)	&	3.98	&	2.4053(-4)	&	2.80	\\
1.7223(-3)	&	1.97	&	4.2887(-4)	&	2.06	&	1.5980(-5)	&	2.85	&	8.3227(-5)	&	3.97	&	3.2658(-5)	&	2.88	\\
4.3549(-4)	&	1.98	&	1.0507(-4)	&	2.03	&	2.1137(-6)	&	2.92	&	1.1326(-5)	&	2.88	&	4.2930(-6)	&	2.93	\\
1.0952(-4)	&	1.99	&	2.6000(-5)	&	2.01	&	2.7176(-7)	&	2.96	&	1.3269(-6)	&	3.09	&	5.5141(-7)	&	2.96	\\
   [1ex] \hline \end{tabular} 
\caption{Experiment for the case $k=1$. Errors for $\mathbf q$ and $u$ on $\partial \mathcal T_h$ behave like $\mathcal O(h^2)$. Comparison of $u$ with respect to the projections (in elements and on their boundaries), as well as the exterior potential superconverge like $\mathcal O(h^3)$.}\label{table:2}
 \end{table}

 \begin{table}[ht] \small
  \centering \begin{tabular}{ c c c c c c c c c c} 
  \hline\hline 
   $e_h^{\mathbf q}$& ecr &$e_h^{\widehat u} $&ecr & $\varepsilon_h^u$ & ecr & $e_h^+$&ecr & $\varepsilon_h^{\widehat u}$ &ecr \\
    \hline
1.2540(-2)	&	-	&	2.7770(-3)	&	-	&	5.9374(-4)	&	-	&	7.5351(-2)	&	-	&	9.3898(-4)	&	-	\\
1.7437(-3)	&	2.85	&	3.2768(-4)	&	3.08	&	3.9253(-5)	&	3.92	&	1.6677(-2)	&	2.18	&	9.6755(-5)	&	3.28	\\
2.3006(-4)	&	2.92	&	3.8082(-5)	&	3.11	&	2.7636(-6)	&	3.83	&	5.8133(-5)	&	8.16	&	7.6652(-6)	&	3.66	\\
2.8039(-5)	&	3.04	&	4.5361(-6)	&	3.07	&	1.4710(-7)	&	4.23	&	1.4569(-6)	&	5.32	&	4.1376(-7)	&	4.21	\\
3.5088(-6)	&	3.00	&	5.5514(-7)	&	3.03	&	9.0340(-9)	&	4.03	&	2.6695(-9)	&	9.09	&	2.5795(-8)	&	4.00	\\
4.3945(-7)	&	3.00	&	6.8673(-8)	&	3.02	&	5.7394(-10)	&	3.98	&	9.5947(-10)	&	1.48	&	1.6331(-9)	&	3.98	\\
   [1ex] \hline \end{tabular} 
\caption{Experiment for the case $k=2$. Errors for $\mathbf q$ and $u$ on $\partial \mathcal T_h$ behave like $\mathcal O(h^3)$. Comparison of $u$ with respect to the projections (in elements and on their boundaries), as well as the exterior potential superconverge like $\mathcal O(h^4)$. The exterior field behaves somewhat erratically, which might be due to unaccounted errors in BEM quadrature.}\label{table:3}
    \end{table}

 \paragraph{Tests related to the diffusion parameter.} For this example, the domain is the rectangle $(-3/2, 3/2)\times (-1,1)$ and we use $k=0$. We start with a fixed triangulation (produced with MATLAB's PDE Toolbox) with 936 elements. We take two different diffusion parameters: a constant value $\kappa(\mathbf x)\equiv \kappa_{cons}$ and a piecewise constant function
\[
\kappa(\mathbf x):=\left\{ \begin{array}{ll} \kappa_{int} & \mbox{in } (-3/4,3/4)\times (-1/2,1/2),\\
1 &\mbox{otherwise}.\end{array}\right.
\]
We note that the values $\kappa_{cons}=0$ and $\kappa_{int}= 0$ make the problem degenerate. We also note that the jump in the discontinuous diffusion coefficient is not resolved by the triangulation, i.e., we do not choose a triangulation with edges on the jump of the coefficient. As a test that might allow us to understand the effect of the diffusion parameter on the coupled system, we compute the condition number for increasing values of $\kappa_{cons}$ and $\kappa_{int}$ and plot them together in Figure \ref{fig:1}. It is clear from the results for constant diffusion that the condition \eqref{eq:3.1} is too restrictive --see also the comments in Section \ref{sec:F} concerning the recent results on nonsymmetric BEM-FEM--, but that growth of this parameter increases the condition number of the system significantly. It is also clear that if the diffusion coefficient grows far from the boundary, the problem is much better conditioned and that, for this case, the growth of conditioning appears to be linear in this parameter, which agrees with the basic fact that the matrix is an affine function of $\kappa^{-1}$ that seems not to degenerate as $\kappa$ grows.

We then take several concrete values (three constant diffusion parameters and two piecewise constant, with the same notation as above), and plot the condition number for uniformly refined grids. Results are shown in Figure \ref{fig:2}.

 \begin{figure}[htb]
\centering
\includegraphics[width=0.6\textwidth]{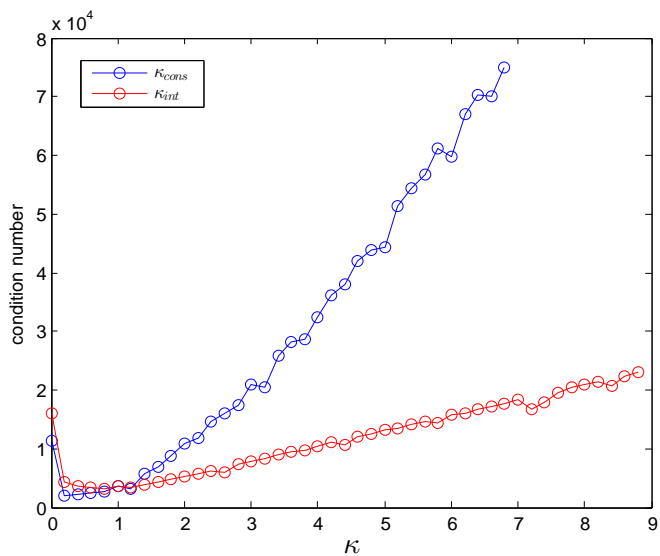} 
\caption{Condition number of the system matrix for growing values of the diffusion parameter in the entire domain ($\kappa_{cons}$) and in an interior subdomain ($\kappa_{int}$).} \label{fig:1}
\end{figure}

 \begin{figure}[htb]
\centering
\includegraphics[width=0.6\textwidth]{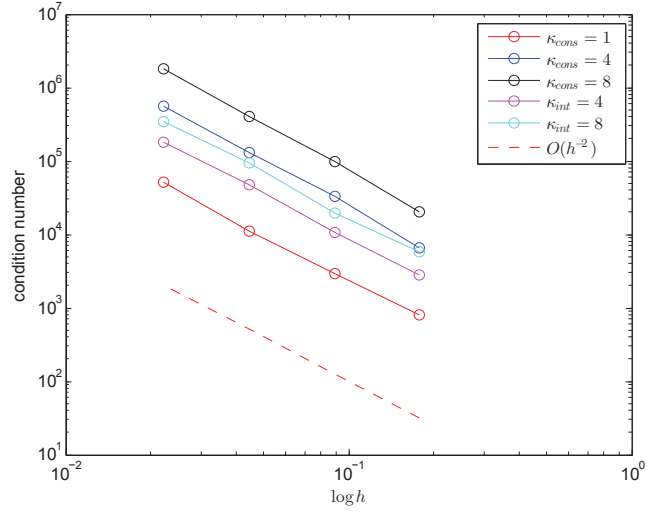} 
\caption{Condition number of the system matrix for different diffusion parameters and uniformly refined grids. All of them behave like $\mathcal O(h^{-2})$, as could be expected from a two dimensional elliptic problem.} \label{fig:2}
\end{figure}

\appendix

\section{Exterior and transmission problems: a compendium}\label{sec:A}

In this section we collect known results about layer potentials, as well as exterior and transmission problems associated to the Laplace equation. The results are straightforward consequences of results that are contained in \cite[Chapters 6 \& 8]{McLean:2000}.

\subsection{Layer potentials}

Let
\begin{equation}\label{eq:A.100}
\Phi(r)=\Phi_d(r):=\left\{ \begin{array}{ll} -1/(2\pi)\, \log r, & \mbox{when $d=2$},\\
1/(4\pi r), & \mbox{when $d=3$},\end{array}\right.
\end{equation}
be the fundamental solution of the Laplace equation. The double layer potential with density $\varphi\in H^{1/2}(\Gamma)$ is:
\begin{eqnarray}\nonumber
(\mathcal D \varphi)(\mathbf x)&:=&\int_\Gamma \nabla_{\mathbf y}\Phi(|\mathbf x-\mathbf y|)\cdot \mathbf n(\mathbf y)\, \varphi(\mathbf y)\,\mathrm d\Gamma(\mathbf y)\\ &=&\frac1{2(d-1)\pi}\int_\Gamma \frac{(\mathbf x-\mathbf y)\cdot\mathbf n(\mathbf y)}{|\mathbf x-\mathbf y|^d}\,\varphi(\mathbf y)\,\mathrm d\Gamma(\mathbf y), \qquad\forall\mathbf x \in \mathbb R^d\setminus\Gamma.\label{eq:A.00}
\end{eqnarray}
The single layer potential is defined with a duality product: for $\lambda \in H^{-1/2}(\Gamma)$, we define
$(\mathcal S \lambda)(\mathbf x):=\langle \lambda,\Phi(|\mathbf x-\cdot|)\rangle_\Gamma,$ on any $\mathbf x \in \mathbb R^d\setminus\Gamma.$
When $\lambda\in L^2(\Gamma)$, this duality product can be written in integral form
\begin{equation}\label{eq:A.101}
(\mathcal S \lambda)(\mathbf x)=\int_\Gamma \Phi(|\mathbf x-\mathbf y|)\,\lambda(\mathbf y)\,\mathrm d\Gamma(\mathbf y).
\end{equation}

\begin{proposition}\label{prop:A.1}
Let $\varphi\in H^{1/2}(\Gamma)$ and $u:=\mathcal D \varphi$. Then:
\begin{subequations}\label{eq:A.1}
\begin{alignat}{2}
& u \in H^1(\Omega)  \mbox{ and }   u \in H^1(\Omega_+\cap B(\mathbf 0;R)) \quad\forall R,\\
& \Delta u = 0   \mbox{ in $\mathbb R^d\setminus\Gamma$},\\
& u\in \mathcal C^\infty(\mathbb R^d\setminus\Gamma) \mbox{ and }  u=\mathcal O(r^{-d+1}) \mbox{ as $r=|\mathbf x|\to \infty$},\\
& \gamma^+ u-\gamma^- u=\varphi \mbox{ and } \partial_{\mathbf n}^+ u-\partial_{\mathbf n}^- u = 0   \mbox{ on $\Gamma$}.\label{eq:A.1d}
\end{alignat}
\end{subequations}
Moreover,
\begin{equation}\label{eq:A.2}
\mathcal D 1 = -\chi_\Omega=\left\{ \begin{array}{ll} -1 & \mbox{in $\Omega$},\\ 0, & \mbox{in $\Omega_+$}.\end{array}\right.
\end{equation}
\end{proposition}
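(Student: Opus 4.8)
The plan is to derive every assertion from the classical theory of layer potentials for the Laplacian on Lipschitz domains, as compiled in \cite[Chapters 6 and 8]{McLean:2000}, supplemented by one elementary computation with the divergence theorem for the identity \eqref{eq:A.2}.

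For the interior regularity, harmonicity and decay I would argue directly from the integral representation \eqref{eq:A.00}. For $\mathbf x$ in a compact subset of $\mathbb R^d\setminus\Gamma$ the integrand and all its $\mathbf x$-derivatives are bounded uniformly in $\mathbf y\in\Gamma$ (the boundary is compact and $\mathbf x$ stays away from it) and $\varphi\in H^{1/2}(\Gamma)\hookrightarrow L^1(\Gamma)$, so one may differentiate under the integral sign as often as desired; since $\mathbf x\mapsto\Phi(|\mathbf x-\mathbf y|)$ is harmonic on $\mathbb R^d\setminus\{\mathbf y\}$, this yields $u\in\mathcal C^\infty(\mathbb R^d\setminus\Gamma)$ and $\Delta u=0$ there. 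For the behaviour at infinity, the explicit kernel $(\mathbf x-\mathbf y)\cdot\mathbf n(\mathbf y)/|\mathbf x-\mathbf y|^d$ in \eqref{eq:A.00} is $\mathcal O(|\mathbf x|^{-d+1})$ uniformly for $\mathbf y$ ranging over the bounded set $\Gamma$, whence $u(\mathbf x)=\mathcal O(r^{-d+1})$ as $r=|\mathbf x|\to\infty$. The local $H^1$ statements on $\Omega$ and on $\Omega_+\cap B(\mathbf 0;R)$ are interior elliptic regularity away from $\Gamma$ together with the mapping properties recalled next near $\Gamma$.

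The two assertions that genuinely use the theory of boundary integral operators on a Lipschitz boundary are the mapping property $\mathcal D:H^{1/2}(\Gamma)\to H^1(\Omega)$ (and into $H^1(\Omega_+\cap B(\mathbf 0;R))$ for every $R$) and the jump relations \eqref{eq:A.1d}: the Dirichlet trace jumps by $\varphi$ while the conormal derivative is continuous across $\Gamma$. These are exactly the continuity and jump formulas for the double layer potential, which I would quote from \cite[Chapter 6]{McLean:2000} (boundedness of the potential operators together with the jump relations for their traces and conormal derivatives), the traces $\gamma^\pm$ and $\partial_{\mathbf n}^\pm$ being well defined on a Lipschitz boundary. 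This is the step I expect to be the real obstacle if a self-contained proof were demanded: on a merely Lipschitz domain the boundedness of the double layer operator and its jump relations rest on the full Calder\'on--Verchota machinery (Rellich-type identities, boundedness of the Cauchy singular integral), which is why the excerpt simply imports it.

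Finally, for \eqref{eq:A.2} I would compute $\mathcal D1$ with the divergence theorem applied to the field $\mathbf y\mapsto\nabla_{\mathbf y}\Phi(|\mathbf x-\mathbf y|)$ over $\Omega$. If $\mathbf x\in\Omega_+$ this field is smooth and divergence-free on $\overline\Omega$, so $\mathcal D1(\mathbf x)=\int_\Gamma\partial_{\mathbf n(\mathbf y)}\Phi(|\mathbf x-\mathbf y|)\,\mathrm d\Gamma(\mathbf y)=\int_\Omega\Delta_{\mathbf y}\Phi(|\mathbf x-\mathbf y|)\,\mathrm d\mathbf y=0$. If $\mathbf x\in\Omega$, remove a ball $B(\mathbf x;\varepsilon)\subset\Omega$, apply the divergence theorem on $\Omega\setminus\overline{B(\mathbf x;\varepsilon)}$ where the field is divergence-free, and note that the flux of $\nabla_{\mathbf y}\Phi(|\mathbf x-\mathbf y|)$ through $\partial B(\mathbf x;\varepsilon)$ is $-1$ by the normalization of the fundamental solution; hence $\mathcal D1(\mathbf x)=-1$. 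This gives $\mathcal D1=-\chi_\Omega$, which is also consistent with \eqref{eq:A.1d}, since then $\gamma^+(\mathcal D1)-\gamma^-(\mathcal D1)=0-(-1)=1$.
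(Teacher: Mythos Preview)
Your proposal is correct and aligns with the paper's own proof, which simply cites \cite[Theorem 6.11]{McLean:2000} and \cite[Chapter 8]{McLean:2000} without further argument. You go slightly beyond the paper by spelling out the elementary parts (smoothness and harmonicity via differentiation under the integral, decay from the explicit kernel, and the divergence-theorem computation of $\mathcal D1$), reserving the citation for the genuinely nontrivial mapping and jump properties on a Lipschitz boundary; this is a reasonable and arguably more informative presentation, but the substance is the same.
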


\begin{proof} See \cite[Theorem 6.11]{McLean:2000} and \cite[Chapter 8]{McLean:2000}.
\end{proof}

The fact that constant densities produce a vanishing exterior solution of the Laplace equation motivates the introduction of the space
\begin{equation}\label{eq:A.0}
H^{1/2}_0(\Gamma):=\{ \varphi\in H^{1/2}(\Gamma)\,:\, \int_\Gamma \varphi =0\}.
\end{equation}

\subsection{Exterior solutions of the Laplace equation}

We consider functions $u_+:\Omega_+\to \mathbb R$ such that
\begin{equation}\label{eq:A.3}
\Delta u_+=0 \mbox{ in $\Omega_+$} \qquad \mbox{and}\qquad u_+\in H^1(\Omega_+\cap B(\mathbf 0;R)) \quad\forall R.
\end{equation}
Since, by Weyl's lemma, locally integrable solutions of the Laplace equation are $\mathcal C^\infty$, it then makes sense to impose a strong radiation condition at infinity. 
The general asymptotic condition we will deal with has the form
\begin{equation}\label{eq:A.3b}
u=c_\infty\Phi(r)+\mathcal O(r^{1-d}) \mbox{ as $r\to \infty$, uniformly in all directions}.
\end{equation}
Note that this condition includes logarithmically growing solutions when $d=2$, unless $c_\infty=0$. The incoming flux on $\Gamma$ is defined as
\begin{equation}\label{eq:A.6}
c_{\mathrm{flux}}:=-\langle \partial_{\mathbf n} u_+,1\rangle_\Gamma.
\end{equation}

\begin{proposition}\label{prop:A.22}
Let $u$ satisfy \eqref{eq:A.3} and \eqref{eq:A.3b}. Then:
\begin{itemize}
\item[{\rm (a)}] 
 $u_+$ admits the representation formula
\begin{equation}\label{eq:A.7}
u_+=\mathcal D \gamma u_+-\mathcal S \partial_{\mathbf n} u_+=\mathcal D(\gamma u_++c)-\mathcal S \partial_{\mathbf n} u_+\qquad \forall c \in \mathbb P_0(\Gamma).
\end{equation}
\item[{\rm (b)}] $c_\infty=c_{\mathrm{flux}}$ and therefore, a necessary and sufficient condition for $u$ to be decaying at infinity in the two dimensional case is $c_{\mathrm{flux}}=0$.
\item[{\rm (c)}] For every $\mathbf x_0\in \Omega$, there exists a unique $\phi\in H^{1/2}_0(\Gamma)$ such that
\begin{equation}
u_+=c_{\mathrm{flux}} \Phi(|\cdot-\mathbf x_0|)+\mathcal D \phi.
\end{equation}
Therefore, $u_+$ can be represented as a double layer potential if and only if $u_+=\mathcal O(r^{-d+1})$ at infinity.
\end{itemize}
\end{proposition}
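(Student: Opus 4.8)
The plan is to reduce everything to the case of a decaying harmonic function by subtracting a monopole placed at a fixed interior point, and then to use Green's identities together with the jump and decay properties collected in Proposition~\ref{prop:A.1}. (These three items are also contained in \cite[Chapters~6~\&~8]{McLean:2000}; what follows is a self-contained outline.) Fix $\mathbf x_0\in\Omega$ and set $g:=\Phi(|\cdot-\mathbf x_0|)$, which is harmonic and smooth in a neighbourhood of $\overline{\Omega_+}$. Expanding $|\mathbf y-\mathbf x_0|$ as $|\mathbf y|\to\infty$ shows that $g$ obeys \eqref{eq:A.3b} with constant at infinity equal to $1$, and the divergence theorem applied in $\Omega$ to $\nabla g$ (whose only source is at $\mathbf x_0$) gives $\langle\partial_{\mathbf n}g,1\rangle_\Gamma=\int_\Omega\Delta g=-1$, so the incoming flux \eqref{eq:A.6} of $g$ is $1$ as well. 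Since the constant $c_\infty$ in the asymptotics of $u_+$ is unique (two values would force $(c_\infty-c_\infty')\Phi(r)=\mathcal O(r^{1-d})$), the function $w:=u_+-c_\infty g$ is harmonic in $\Omega_+$, lies in $H^1_{\mathrm{loc}}(\overline{\Omega_+})$, and satisfies $w=\mathcal O(r^{1-d})$.

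For part (a), apply Green's second identity to $w$ and $\Phi(|\mathbf x-\cdot|)$ on $(\Omega_+\cap B_R)\setminus\overline{B_\varepsilon(\mathbf x)}$, let $\varepsilon\to0$ to extract $w(\mathbf x)$, and let $R\to\infty$; the integrals over $\partial B_R$ vanish in the limit because the interior gradient estimate $|\nabla w(\mathbf x)|\lesssim|\mathbf x|^{-1}\sup_{B(\mathbf x,|\mathbf x|/2)}|w|=\mathcal O(r^{-d})$ makes both boundary terms $\mathcal O(R^{1-d})$ (that is $\mathcal O(R^{-1}\log R)$ when $d=2$). This yields $w=\mathcal D\gamma^+w-\mathcal S\partial_{\mathbf n}^+w$ in $\Omega_+$. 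Green's second identity in $\Omega$ applied to $g$ and $\Phi(|\mathbf x-\cdot|)$ with $\mathbf x\in\Omega_+$ (the only source being that of $g$ at $\mathbf x_0$) gives $\mathcal D\gamma g-\mathcal S\partial_{\mathbf n}g=g$ in $\Omega_+$; adding $c_\infty$ times this to the representation of $w$ and recombining traces (legitimate since $g$ is smooth across $\Gamma$) yields the first identity in \eqref{eq:A.7}, and the second is immediate from $\mathcal D c=-c\,\chi_\Omega=0$ in $\Omega_+$, see \eqref{eq:A.2}. For part (b), $c_\infty g$ contributes $c_\infty$ to both quantities by the computations just made, and for $w$ integrating $\Delta w=0$ over $\Omega_+\cap B_R$ and letting $R\to\infty$ gives $\langle\partial_{\mathbf n}^+w,1\rangle_\Gamma=\lim_R\int_{\partial B_R}\partial_r w=0$, while its constant at infinity vanishes by construction; hence $c_{\mathrm{flux}}=c_\infty$, and when $d=2$ boundedness of $u_+$ is equivalent to $c_\infty=0$, i.e.\ to $c_{\mathrm{flux}}=0$.

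For part (c), keep $w:=u_+-c_{\mathrm{flux}}\Phi(|\cdot-\mathbf x_0|)=u_+-c_\infty g$, so $w$ is harmonic in $\Omega_+$, is $\mathcal O(r^{1-d})$, and satisfies $\langle\partial_{\mathbf n}^+w,1\rangle_\Gamma=0$. This last equality is exactly the compatibility condition for the interior Neumann problem, so there exists $\widetilde w$, harmonic in $\Omega$, with $\partial_{\mathbf n}^-\widetilde w=\partial_{\mathbf n}^+w$ on $\Gamma$, unique up to an additive constant. Since $\partial_{\mathbf n}^-\widetilde w=\partial_{\mathbf n}^+w$, combining the exterior representation of part (a) for $w$ with the interior Green representation of $\widetilde w$ (which evaluates to $0$ at points outside $\overline{\Omega}$) makes the single-layer terms cancel, leaving $w=\mathcal D\phi$ in $\Omega_+$ with $\phi:=\gamma^+w-\gamma^-\widetilde w\in H^{1/2}(\Gamma)$; replacing $\phi$ by $\phi-|\Gamma|^{-1}\int_\Gamma\phi$, which does not change $\mathcal D\phi$ in $\Omega_+$ by \eqref{eq:A.2}, we obtain $\phi\in H^{1/2}_0(\Gamma)$ and $u_+=c_{\mathrm{flux}}\Phi(|\cdot-\mathbf x_0|)+\mathcal D\phi$. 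For uniqueness, if $\mathcal D\phi_1=\mathcal D\phi_2$ in $\Omega_+$ with $\phi_j\in H^{1/2}_0(\Gamma)$ then $\psi:=\phi_1-\phi_2$ satisfies $\mathcal D\psi=0$ in $\Omega_+$; by \eqref{eq:A.1d}, $\mathcal D\psi|_\Omega$ has vanishing interior normal derivative, hence is constant, so $\psi=\gamma^+\mathcal D\psi-\gamma^-\mathcal D\psi$ equals that constant, which must be $0$ because $\int_\Gamma\psi=0$. The closing equivalence is immediate: $\mathcal D\phi=\mathcal O(r^{-d+1})$ by Proposition~\ref{prop:A.1}, and conversely $u_+=\mathcal O(r^{-d+1})$ forces $c_\infty=0$, hence $c_{\mathrm{flux}}=0$, hence $u_+=\mathcal D\phi$.

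I expect the main obstacle to be the behaviour at infinity together with the ``purification'' step in part (c). If one applies Green's identity to $u_+$ directly rather than to $w$, the boundary integrals on $\partial B_R$ grow logarithmically when $d=2$ and cancel only after the two terms are combined, which is why peeling off the monopole $c_\infty g$ at the outset is convenient; and turning the decaying harmonic function $w$ with zero-mean Neumann data into a pure double-layer potential needs the interior-Neumann-extension device and careful tracking of the signs in the interior versus exterior Green representations.
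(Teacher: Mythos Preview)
Your argument is correct and considerably more detailed than the paper's own proof, which essentially cites \cite{McLean:2000} for (a) and (c) and calls (b) ``straightforward using the strong integral form of the potentials.'' For part (b), the paper's intended route is to read off the asymptotics directly from the representation formula in (a): $\mathcal D\gamma u_+=\mathcal O(r^{-d+1})$ while $\mathcal S\partial_{\mathbf n}u_+=\Phi(r)\langle\partial_{\mathbf n}u_+,1\rangle_\Gamma+\mathcal O(r^{-d+1})$, which immediately gives $c_\infty=c_{\mathrm{flux}}$. Your approach via the monopole splitting and the divergence theorem in the exterior annulus is equally valid and perhaps more transparent.

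The only genuine methodological difference is in part (c). The paper treats $w=u_+-c_{\mathrm{flux}}\Phi(|\cdot-\mathbf x_0|)$ as the solution of an \emph{exterior} Neumann problem and invokes the double-layer ansatz for that problem (i.e., solvability of $\mathcal W\phi=-\partial_{\mathbf n}^+w$ in $H^{1/2}_0(\Gamma)$, cf.\ \cite[Theorems~8.19--8.21]{McLean:2000}), after which uniqueness of the exterior Neumann problem identifies $\mathcal D\phi$ with $w$. You instead solve an \emph{interior} Neumann problem to produce an extension $\widetilde w$ with matching normal derivative, then subtract the interior Green representation (which vanishes at exterior points) from the exterior one to cancel the single-layer term, obtaining the density explicitly as $\phi=\gamma^+w-\gamma^-\widetilde w$. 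Both are classical devices; yours has the advantage of being constructive and of not relying on the mapping properties of $\mathcal W$, while the paper's has the advantage of pointing directly to results already assembled in the appendix (Proposition~\ref{prop:A.6}). Your uniqueness argument via the jump relations and the zero-mean condition is the standard one and is fine.
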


\begin{proof}
Part (a) is a well-known representation formula \cite[Theorem 7.15]{McLean:2000}. The inclusion of any additive constant in the input of $\mathcal D$ follows from \eqref{eq:A.2}. Part (b) is straightforward using the strong integral form of the potentials. Part (c) can be easily proved by considering $u_+-c_{\mathrm{flux}} \Phi(|\cdot-\mathbf x_0|)$ as the solution of an exterior Neumann problem and using a double layer potential representation \cite[Theorem 8.19--8.21]{McLean:2000}.
\end{proof}

\subsection{Integral operators}\label{sec:A.4}

Because of the transmission conditions satisfied by the double layer potential \eqref{eq:A.1d}, we can define the operators
\begin{equation}\label{eq:A.11}
\mathcal W \varphi:=-\partial_{\mathbf n}^\pm \mathcal D \varphi, \qquad \mathcal K\varphi:=\smallfrac12 (\gamma^+ \mathcal D \varphi+\gamma^-\mathcal D\varphi).
\end{equation}
Note that the conditions \eqref{eq:A.1d} and the definition of $\mathcal K$ imply that
\begin{equation}\label{eq:A.11b}
\gamma^\pm \mathcal D \varphi=\pm \smallfrac12\varphi+\mathcal K\varphi. 
\end{equation}
When $\Gamma$ is a polyhedral ($d=3$) or polygonal ($d=2$) boundary,
the integral expression of the operator $\mathcal K$
\begin{equation}\label{eq:A.12}
(\mathcal K\varphi)(\mathbf x)=\frac1{2(d-1)\pi}\int_\Gamma \frac{(\mathbf x-\mathbf y)\cdot\mathbf n(\mathbf y)}{|\mathbf x-\mathbf y|^d}\,\varphi(\mathbf y)\,\mathrm d\Gamma(\mathbf y)
\end{equation}
is valid on all points $\mathbf x \in\Gamma$ that do not lie on edges. In particular \eqref{eq:A.12} holds almost everywhere on $\Gamma$. Also, if $\Gamma$ is a polyhedral boundary ($d=3$) and $\varphi,\phi\in L^\infty(\Gamma)$ are such that $\nabla_\Gamma \varphi,\nabla_\Gamma \phi \in L^\infty(\Gamma)^d$ (here $\nabla_\Gamma$ is the tangential gradient), we have an integral form for the bilinear form associated to $\mathcal W$:
\begin{equation}
\langle\mathcal W\varphi,\phi\rangle=\int_\Gamma \int_\Gamma (\mathbf n(\mathbf x)\times \nabla_\Gamma \varphi(\mathbf x))\cdot(\mathbf n(\mathbf y)\times \nabla_\Gamma\phi(\mathbf y))\,\Phi(|\mathbf x-\mathbf y|)\,\mathrm d\Gamma(\mathbf x)\mathrm d\Gamma(\mathbf y).
\end{equation}
In the two dimensional case, the bilinear form is
\begin{equation}
\langle\mathcal W\varphi,\phi\rangle=\int_\Gamma \int_\Gamma \partial_\tau \varphi(\mathbf x)\partial_\tau \phi(\mathbf y)\,\Phi(|\mathbf x-\mathbf y|) \,\mathrm d\Gamma(\mathbf x)\mathrm d\Gamma(\mathbf y),
\end{equation}
where $\partial_\tau$ is the tangential derivative on $\Gamma$.

\begin{proposition}[Properties of $\mathcal W$]\label{prop:A.6}
The operator $\mathcal W$ is bounded  $H^{1/2}(\Gamma) \to H^{-1/2}(\Gamma)$. Its kernel is the set of constant functions.
The bilinear form $\omega: H^{1/2}(\Gamma)\times H^{1/2}(\Gamma) \to \mathbb R$,
\[
\omega(\varphi,\phi):=\langle \mathcal W\varphi,\phi\rangle_\Gamma + \int_\Gamma \varphi\, \int_\Gamma \phi
\]
is bounded, symmetric, and coercive. Also  $\omega(\varphi,\varphi)=\langle\mathcal W \varphi,\varphi\rangle_\Gamma$ for all $\varphi \in H^{1/2}_0(\Gamma)$ and 
\[
\langle\mathcal W \varphi,\varphi\rangle_\Gamma =(\nabla u_\star,\nabla u_\star)_{\mathbb R^d\setminus\Gamma}, \quad \mbox{where } u_\star=\mathcal D \varphi, \quad \varphi\in H^{1/2}(\Gamma).
\]
\end{proposition}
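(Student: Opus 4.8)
The plan is to dispatch the five assertions in turn, using only the jump relations \eqref{eq:A.1d}, the mapping properties of $\mathcal D$ recorded in Proposition \ref{prop:A.1}, and one classical ellipticity fact for the hypersingular operator. For boundedness, I would view $\mathcal W=-\partial_{\mathbf n}^\pm\mathcal D$ as the composition of $\mathcal D\colon H^{1/2}(\Gamma)\to H^1(\Omega)\times H^1(\Omega_+\cap B(\mathbf 0;R))$ with the Neumann trace operator, which is bounded from $\{v:\Delta v\in L^2\}$ into $H^{-1/2}(\Gamma)$; since $\Delta\mathcal D\varphi=0$ this gives $\mathcal W\in\mathcal L(H^{1/2}(\Gamma),H^{-1/2}(\Gamma))$, and boundedness of $\omega$ follows at once because $|\int_\Gamma\varphi|\le|\Gamma|^{1/2}\|\varphi\|_\Gamma\le C\|\varphi\|_{H^{1/2}(\Gamma)}$.

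The core step is the variational identity $\langle\mathcal W\varphi,\phi\rangle_\Gamma=(\nabla\mathcal D\varphi,\nabla\mathcal D\phi)_{\mathbb R^d\setminus\Gamma}$. I would prove it by setting $u_\star=\mathcal D\varphi$, $v_\star=\mathcal D\phi$ and applying Green's first identity on $\Omega$ and on $\Omega_+\cap B(\mathbf 0;R)$; since $u_\star$ is harmonic off $\Gamma$ with $u_\star=\mathcal O(r^{1-d})$ and (interior estimates for harmonic functions) $\nabla u_\star=\mathcal O(r^{-d})$, the contribution of $\partial B(\mathbf 0;R)$ is $\mathcal O(R^{-d})$ and drops out as $R\to\infty$, leaving
\[
(\nabla u_\star,\nabla v_\star)_{\mathbb R^d\setminus\Gamma}=\langle\partial_{\mathbf n}^- u_\star,\gamma^- v_\star\rangle_\Gamma-\langle\partial_{\mathbf n}^+ u_\star,\gamma^+ v_\star\rangle_\Gamma .
\]
Substituting $\partial_{\mathbf n}^+ u_\star=\partial_{\mathbf n}^- u_\star=-\mathcal W\varphi$ and $\gamma^+ v_\star-\gamma^- v_\star=\phi$ from \eqref{eq:A.1d} collapses the right-hand side to $\langle\mathcal W\varphi,\phi\rangle_\Gamma$. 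Symmetry of $\langle\mathcal W\cdot,\cdot\rangle_\Gamma$ is then read off the symmetry of the right-hand side, the choice $\varphi=\phi$ is the last displayed identity of the statement, and, since $\int_\Gamma\varphi\int_\Gamma\phi$ is trivially symmetric, $\omega$ is symmetric with $\omega(\varphi,\varphi)=\langle\mathcal W\varphi,\varphi\rangle_\Gamma$ whenever $\int_\Gamma\varphi=0$.

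For the kernel, if $\mathcal W\varphi=0$ the energy formula forces $\nabla(\mathcal D\varphi)=\mathbf 0$ on $\mathbb R^d\setminus\Gamma$, so $\mathcal D\varphi$ is constant on $\Omega$ and on $\Omega_+$, the exterior constant vanishing by decay; hence $\varphi=\gamma^+\mathcal D\varphi-\gamma^-\mathcal D\varphi$ is constant. Conversely $\mathcal D1=-\chi_\Omega$ by \eqref{eq:A.2} is locally constant, so $\mathcal W1=0$, and therefore $\ker\mathcal W=\mathbb P_0(\Gamma)$.

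Finally, for coercivity I would decompose $\varphi=\varphi_0+\bar\varphi$ with $\bar\varphi:=|\Gamma|^{-1}\int_\Gamma\varphi$ and $\varphi_0\in H^{1/2}_0(\Gamma)$; using $\mathcal W\bar\varphi=0$ and the symmetry just established, together with $\int_\Gamma\varphi\int_\Gamma\varphi=|\Gamma|^2\bar\varphi^2$, one obtains
\[
\omega(\varphi,\varphi)=\langle\mathcal W\varphi_0,\varphi_0\rangle_\Gamma+|\Gamma|^2\bar\varphi^2 .
\]
The single genuinely non-routine ingredient, and the main obstacle, is the $H^{1/2}_0(\Gamma)$-ellipticity $\langle\mathcal W\varphi_0,\varphi_0\rangle_\Gamma\ge C\|\varphi_0\|_{H^{1/2}(\Gamma)}^2$ — the classical positive definiteness of the hypersingular operator modulo constants — which I would quote from \cite[Chapter 8]{McLean:2000} rather than reprove. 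Combining it with $\|\varphi\|_{H^{1/2}(\Gamma)}^2\le C(\|\varphi_0\|_{H^{1/2}(\Gamma)}^2+\bar\varphi^2)$ yields $\omega(\varphi,\varphi)\ge C\|\varphi\|_{H^{1/2}(\Gamma)}^2$; everything else is bookkeeping with Green's identities, the jump relations \eqref{eq:A.1d}, and the mapping properties already collected in the appendix.
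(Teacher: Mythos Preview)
Your argument is correct in every step: the composition $\partial_{\mathbf n}^\pm\circ\mathcal D$ for boundedness, the two-sided Green identity with the decay bookkeeping for the energy formula, the kernel characterization via $\nabla\mathcal D\varphi=\mathbf 0$, and the splitting $\varphi=\varphi_0+\bar\varphi$ for coercivity all go through as written. The paper itself does not provide any of this; its proof consists solely of the citation ``See \cite[Theorems 8.20 \& 8.21]{McLean:2000}'', so you have in fact supplied a self-contained derivation where the authors deferred entirely to the reference. The only external input you still need---$H^{1/2}_0$-ellipticity of $\mathcal W$---is precisely the content of those theorems in McLean, so your proof and the paper's citation ultimately rest on the same source, but your version makes transparent which parts are elementary consequences of the jump relations and which part is the genuine analytic ingredient.
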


\begin{proof} See \cite[Theorems 8.20 \& 8.21]{McLean:2000}.
\end{proof}

\begin{proposition}[Properties of $\mathcal K$] \label{prop:A.7}
The operator $\mathcal K$ is bounded  $H^{1/2}(\Gamma) \to H^{1/2}(\Gamma)$ and $H^0(\Gamma)\to H^0(\Gamma)$. Moreover,
\[
\langle \partial_{\mathbf n}^+ \mathcal S \lambda,\varphi\rangle_\Gamma = \langle \lambda,-\smallfrac12\varphi+\mathcal K\varphi\rangle_\Gamma \qquad \forall \lambda\in H^{-1/2}(\Gamma), \quad \varphi\in H^{1/2}(\Gamma).
\]
Finally
\begin{equation}\label{eq:A.70}
\|\xi-\smallfrac1{|\Gamma|}\int_\Gamma \xi\|_\Gamma \le C \sup_{0\neq \phi\in L^2(\Gamma)} \frac{\langle \frac12\xi+\mathcal K\xi,\phi\rangle_\Gamma}{\|\phi\|_\Gamma} \qquad \forall \xi \in H^0(\Gamma).
\end{equation}
\end{proposition}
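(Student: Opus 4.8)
The plan is to establish the three assertions in turn, leaning on the corresponding results for layer potentials in McLean for the mapping properties and the jump relation, and reserving a genuine argument only for the inf--sup bound \eqref{eq:A.70}. First, the boundedness of $\mathcal K$ on $H^{1/2}(\Gamma)$ follows from the mapping properties of the double layer potential $\mathcal D : H^{1/2}(\Gamma) \to H^1(\Omega)$ together with the trace theorem and the definition $\mathcal K\varphi = \frac12(\gamma^+\mathcal D\varphi + \gamma^-\mathcal D\varphi)$ from \eqref{eq:A.11}; boundedness on $H^0(\Gamma)=L^2(\Gamma)$ is the content of \cite[Chapter 8]{McLean:2000} for Lipschitz (in particular polyhedral/polygonal) boundaries. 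The adjoint-type identity $\langle \partial_{\mathbf n}^+\mathcal S\lambda,\varphi\rangle_\Gamma = \langle\lambda,-\frac12\varphi+\mathcal K\varphi\rangle_\Gamma$ is the standard duality between the adjoint double layer operator $\mathcal K'$ (which appears in the exterior normal derivative of the single layer potential, $\partial_{\mathbf n}^+\mathcal S\lambda = -\frac12\lambda + \mathcal K'\lambda$) and $\mathcal K$; I would simply cite \cite[Chapter 7]{McLean:2000} for $\langle\mathcal K'\lambda,\varphi\rangle_\Gamma=\langle\lambda,\mathcal K\varphi\rangle_\Gamma$ and combine with the jump relation for $\partial_{\mathbf n}^\pm\mathcal S$.

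The substantive step is \eqref{eq:A.70}. The operator $\widetilde{\mathcal K}=\frac12\mathcal I+\mathcal K$ is precisely the exterior Dirichlet trace $\gamma^+\mathcal D$ of the double layer potential (by \eqref{eq:A.11b}), and its kernel consists exactly of the constant functions: if $\frac12\xi+\mathcal K\xi=0$ then $\gamma^+\mathcal D\xi=0$, so $\mathcal D\xi$ solves an exterior Dirichlet problem with zero data and (by the decay \eqref{eq:A.1}) vanishes in $\Omega_+$; then the jump relation $\gamma^+\mathcal D\xi-\gamma^-\mathcal D\xi=\xi$ together with $\partial_{\mathbf n}^+=\partial_{\mathbf n}^-$ forces $\mathcal D\xi$ to solve an interior problem whose Cauchy data is $(\xi,0)$, which by uniqueness for the interior Neumann problem makes $\mathcal D\xi$ constant in $\Omega$, hence $\xi$ constant by \eqref{eq:A.2}. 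So $\widetilde{\mathcal K}$ is an isomorphism from $L^2(\Gamma)/\mathbb P_0(\Gamma)$ onto its (closed) range; the bound \eqref{eq:A.70} is then the open mapping theorem applied to $\widetilde{\mathcal K}$ acting on the quotient, rephrased as an inf--sup condition using the $L^2(\Gamma)$ self-duality: for $\xi$ with zero mean, $\|\xi\|_\Gamma = \sup_{0\ne\phi\in L^2(\Gamma)} \langle\xi,\phi\rangle_\Gamma/\|\phi\|_\Gamma \le C\sup_\phi \langle\widetilde{\mathcal K}\xi,\phi\rangle_\Gamma/\|\phi\|_\Gamma$ once one knows $\widetilde{\mathcal K}^{-1}$ is bounded on the quotient.

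The main obstacle is the Fredholm/invertibility statement underlying the previous paragraph: one must know that $\widetilde{\mathcal K}=\frac12\mathcal I+\mathcal K$ is a Fredholm operator of index zero on $L^2(\Gamma)$ (equivalently that $\mathcal K$ generates a nice enough perturbation of $\frac12\mathcal I$), so that injectivity modulo constants upgrades to surjectivity onto a closed subspace of codimension one. On smooth boundaries $\mathcal K$ is compact and this is classical; on a polyhedral boundary $\mathcal K$ is only bounded on $L^2(\Gamma)$ but the relevant Fredholm theory for the exterior Dirichlet problem is exactly \cite[Theorems 8.19--8.21]{McLean:2000}, from which solvability of $\widetilde{\mathcal K}\xi = g$ for mean-value-zero $g$ (and uniqueness up to constants) follows; I would invoke that and deduce \eqref{eq:A.70}. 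The remaining bookkeeping — that the quotient by $\mathbb P_0(\Gamma)$ matches the term $\frac1{|\Gamma|}\int_\Gamma\xi$, and that testing against $\phi\in L^2(\Gamma)$ rather than $\phi\in H^{-1/2}(\Gamma)$ is legitimate here because $\widetilde{\mathcal K}\xi\in H^{1/2}(\Gamma)\hookrightarrow L^2(\Gamma)$ — is routine.
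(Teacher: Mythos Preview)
Your proposal is correct and follows essentially the same route as the paper: cite standard mapping and transposition results for the first two assertions, and derive \eqref{eq:A.70} from the fact that $\tfrac12\mathcal I+\mathcal K$ is Fredholm of index zero on $L^2(\Gamma)$ with kernel equal to the constants. The only notable difference is that the paper attributes the $L^2(\Gamma)$ boundedness (and implicitly the Fredholm property on $L^2$) to Verchota's harmonic-analysis results rather than to McLean, and it omits the explicit kernel argument you supply; your added detail on why $\ker(\tfrac12\mathcal I+\mathcal K)=\mathbb P_0(\Gamma)$ is fine, though strictly speaking the potential-theoretic steps you invoke (Proposition~\ref{prop:A.1}) are stated for $H^{1/2}$ densities and need the Verchota extension to cover $\xi\in L^2(\Gamma)$.
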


\begin{proof}
Boundedness in $H^{1/2}(\Gamma)$ and the transposition property follow from the variational theory of layer potentials: see \cite[Theorems 6.11 \& 6.17]{McLean:2000}. For the $H^0(\Gamma)$ boundedness techniques of harmonic analysis are needed \cite{Verchota:1984}. The bound \eqref{eq:A.70} follows from the fact that $\smallfrac12\mathcal I+\mathcal K$ is Fredholm of index zero and its kernel is the set of constant functions.
\end{proof}

\subsection{Transmission problems}\label{sec:A.3}

Let $0\le \kappa \in L^\infty(\Omega)$ be such that $\kappa^{-1}\in L^\infty(\Omega)$. The data of the transmission problem are $f\in L^2(\Omega)$, $\beta_0\in H^{1/2}(\Gamma)$, $\beta_1\in H^{-1/2}(\Gamma)$.
We look for $u:\Omega\to\mathbb R$ and $u_+: \Omega_+\to\mathbb R$ satisfying: the exterior Laplace equation \eqref{eq:A.3} with radiation condition \eqref{eq:A.3b}, the interior elliptic equation
\begin{equation}\label{eq:A.9}
u\in H^1(\Omega) \qquad -\nabla\cdot(\kappa \nabla u)=f \mbox{ in $\Omega$},
\end{equation}
and the transmission conditions
\begin{equation}\label{eq:A.10}
\gamma u=\gamma u_++\beta_0\mbox{ on $\Gamma$} \qquad  \mbox{ and } \qquad \kappa \nabla u \cdot\mathbf n = \partial_{\mathbf n} u_++\beta_1 \mbox{ on $\Gamma$}.
\end{equation}
Because of \eqref{eq:A.9}-\eqref{eq:A.10}, it is easy to prove that $c_{\mathrm{flux}}=c_{\mathrm{data}}$, where
\begin{equation}
c_{\mathrm{data}}:=\int_\Omega f+\langle \beta_1,1\rangle_\Gamma.
\end{equation}

\begin{proposition}[Transmission problem]\label{prop:A.4}
The transmission problem looking for $u, u_+$ and $c_\infty$ satisfying \eqref{eq:A.3}, \eqref{eq:A.3b}, \eqref{eq:A.9}, and \eqref{eq:A.10}, is uniquely solvable.
\begin{itemize}
\item[{\rm (a)}] When $d=2$, the solution is decaying  ($c_\infty=0$ in \eqref{eq:A.3b}) if and only if $c_{\mathrm{data}}=0$.
\item[{\rm (b)}] When $d=3$, the solution satisfies $u_+=\mathcal O(r^{-2})$ as $r\to \infty$ if and only if $c_{\mathrm{data}}=0$. Finally, if $c_{\mathrm{data}}\neq 0$ and $\mathbf x_0\in \Omega$, we can write
\[
u_+=c_{\mathrm{data}} \Phi(|\cdot-\mathbf x_0|)+u_+^d,
\]
where $u_+^d$ satisfies \eqref{eq:A.3}, and $u_+^d=\mathcal O(r^{-2})$ at infinity. The transmission conditions can then be written
\[
\gamma u=\gamma u_+^d+\widetilde\beta_0\mbox{ on $\Gamma$} \qquad  \mbox{ and } \qquad \kappa \nabla u \cdot\mathbf n = \partial_{\mathbf n} u_+^d+\widetilde\beta_1 \mbox{ on $\Gamma$},
\]
where
\[
\widetilde\beta_0:=\beta_0+c_{\mathrm{data}} \Phi(|\cdot-\mathbf x_0|), \qquad \widetilde\beta_1:=\beta_1+c_{\mathrm{data}}\nabla \Phi(|\cdot-\mathbf x_0|)\cdot\mathbf n,
\]
and therefore
\[
\int_\Omega f +\langle \widetilde\beta_1,1\rangle_\Gamma=0.
\]
\end{itemize}
\end{proposition}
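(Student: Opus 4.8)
\emph{Overview.} The real content of the statement is the well-posedness assertion; once that is available, parts (a) and (b) are bookkeeping built on the identity $c_\infty=c_{\mathrm{flux}}=c_{\mathrm{data}}$ (Proposition~\ref{prop:A.22}(b) and the identity preceding the statement). I would begin with uniqueness, since the same energy computation is reused in existence. If two triples solve the problem, their difference solves \eqref{eq:A.3}, \eqref{eq:A.3b}, \eqref{eq:A.9}, \eqref{eq:A.10} with $f=0$, $\beta_0=0$, $\beta_1=0$, hence $c_{\mathrm{data}}=0$, hence $c_\infty=0$ for the exterior part, which therefore decays fast enough ($u_+=\mathcal O(r^{1-d})$, $\nabla u_+=\mathcal O(r^{-d})$) that Green's identity is valid in $\Omega_+$ with a vanishing contribution at infinity. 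Testing \eqref{eq:A.9} with the difference and using the homogeneous transmission conditions gives
\[
\int_\Omega\kappa\,|\nabla(u-u')|^2+\int_{\Omega_+}|\nabla(u_+-u_+')|^2=0,
\]
and since $\kappa^{-1}\in L^\infty(\Omega)$ both gradients vanish; the exterior part is then a decaying constant, hence zero, whence $\gamma(u-u')=0$, so $u=u'$, and finally $c_\infty=c_\infty'$ because $c_\infty=c_{\mathrm{flux}}$.

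\emph{Existence.} I would reduce the exterior Laplace problem to the interface. The exterior Dirichlet and Neumann problems \eqref{eq:A.3}--\eqref{eq:A.3b} are uniquely solvable for arbitrary boundary data when $d=3$ \cite[Ch.~8]{McLean:2000}; when $d=2$ one first extracts the monopole, writing $u_+=c_{\mathrm{data}}\,\Phi(|\cdot-\mathbf x_0|)+w$ with $w$ decaying and, by Proposition~\ref{prop:A.22}(c), $w=\mathcal D\phi$ for a unique $\phi\in H^{1/2}_0(\Gamma)$. In either case the exterior Dirichlet-to-Neumann map is bounded and symmetric, with associated quadratic form the nonnegative exterior Dirichlet energy, and its boundary integral realization involves $\mathcal K$ (Proposition~\ref{prop:A.7}) and $\mathcal W$ (Proposition~\ref{prop:A.6}). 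Substituting the resulting expression for $\partial_{\mathbf n}u_+$ into the interior weak form of \eqref{eq:A.9} and using \eqref{eq:A.10} yields a variational problem on $H^1(\Omega)$ (respectively on $H^1(\Omega)\times H^{1/2}_0(\Gamma)$) whose bilinear form is the sum of the $\kappa$-weighted Dirichlet form $\int_\Omega\kappa\nabla u\cdot\nabla v$ and the exterior energy form, coupled through $\tfrac12\mathcal I+\mathcal K$. This form is bounded, and coercive once the one-dimensional kernel of the interior Dirichlet form is accounted for — a nonzero interior constant cannot be matched by a decaying exterior harmonic function, so the transmission coupling controls it — so that Lax--Milgram, or more softly the Fredholm alternative together with the uniqueness just proved, produces a solution; $u_+$ is then recovered from its Cauchy data via \eqref{eq:A.7}.

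\emph{Parts (a), (b), and the main obstacle.} Both dichotomies follow from $c_\infty=c_{\mathrm{flux}}=c_{\mathrm{data}}$: for $d=2$ the radiation form \eqref{eq:A.3b} carries no logarithm exactly when $c_\infty=0$, i.e.\ $c_{\mathrm{data}}=0$, which is (a); for $d=3$, $c_\infty=0$ forces $u_+=\mathcal O(r^{1-d})=\mathcal O(r^{-2})$, the first claim of (b). For the splitting, $\Phi(|\cdot-\mathbf x_0|)$ is harmonic in $\Omega_+$ since $\mathbf x_0\in\Omega$, so $u_+^d:=u_+-c_{\mathrm{data}}\Phi(|\cdot-\mathbf x_0|)$ still satisfies \eqref{eq:A.3} and \eqref{eq:A.10} becomes the stated condition for $(u,u_+^d)$ with $\widetilde\beta_0,\widetilde\beta_1$; since the incoming flux of $\Phi(|\cdot-\mathbf x_0|)$ across $\Gamma$ equals its $c_\infty=1$, one has $\langle\nabla\Phi(|\cdot-\mathbf x_0|)\cdot\mathbf n,1\rangle_\Gamma=-1$, hence $\int_\Omega f+\langle\widetilde\beta_1,1\rangle_\Gamma=(\int_\Omega f+\langle\beta_1,1\rangle_\Gamma)-c_{\mathrm{data}}=0$, so the exterior flux of $u_+^d$ vanishes and $u_+^d=\mathcal O(r^{-2})$. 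I expect the only genuinely delicate point to be the coercivity/Fredholm step of the existence part: controlling the constant kernel of the interior Dirichlet form through the interface, and, for $d=2$, accommodating the logarithmic behaviour at infinity, which is precisely what dictates working in $H^{1/2}_0(\Gamma)$ and peeling off $c_{\mathrm{data}}\Phi(|\cdot-\mathbf x_0|)$ before setting up the variational problem; everything else is standard mapping-property bookkeeping from \cite[Chapters 6 \& 8]{McLean:2000} and the potential identities of Propositions~\ref{prop:A.1}, \ref{prop:A.6}, \ref{prop:A.7} and \ref{prop:A.22}.
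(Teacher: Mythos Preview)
Your proposal is correct and follows essentially the same route the paper intends: the paper's own proof is a single sentence pointing to a symmetric boundary--field variational formulation (citing \cite[Section 1.5]{GaHs:1995}) for existence and uniqueness, which is precisely the coupling of the interior $\kappa$-weighted Dirichlet form with the exterior energy realized through $\mathcal K$ and $\mathcal W$ that you spell out. Your separate energy argument for uniqueness and your treatment of (a), (b) via $c_\infty=c_{\mathrm{flux}}=c_{\mathrm{data}}$ are exactly the ingredients the paper leaves implicit; you simply supply the details where the paper is content with a reference.
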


\begin{proof}
Existence and uniqueness of solution of \eqref{eq:A.3}, \eqref{eq:A.3b}, \eqref{eq:A.9}, and \eqref{eq:A.10} can be easily proved using a symmetric boundary-field formulation: see \cite[Section 1.5]{GaHs:1995} for the general methodology and the needed background results.
\end{proof}

\bibliographystyle{abbrv}
\bibliography{referencesHDGBEM}

\end{document}